
\documentclass[10pt]{amsart}

\usepackage{amsmath}
\usepackage{amssymb}
\usepackage{array,subfigure}
\usepackage{graphicx}

\newtheorem{thm}{Theorem}[section]
\newtheorem{prop}[thm]{Proposition}
\newtheorem{cor}[thm]{Corollary}
\newtheorem{lem}[thm]{Lemma}
\newtheorem{defn}[thm]{Definition}
\newtheorem{conj}[thm]{Conjecture}
\newtheorem{prob}[thm]{Problem}


\newenvironment{remark}{\refstepcounter{thm} \medskip \noindent {\bf  Remark \arabic{section}.\arabic{thm}.}}{\hfill\mbox{}\bigskip}

\newcounter{num}

\newenvironment{thmlist}{\begin{list}{(\roman{num})}{\usecounter{num}\setlength{\leftmargin}{25pt}
\setlength{\itemindent}{0pt}\setlength{\labelwidth}{20pt}\setlength{\labelsep}{5pt}\setlength{\itemsep}{0in}}}{\end{list}}

\newcommand{\C}{\mathbb{C}}

\newcommand{\R}{\mathbb{R}}
\newcommand{\Z}{\mathbb{Z}}
\newcommand{\N}{\mathbb{N}}
\newcommand{\Q}{\mathbb{Q}}
\newcommand{\cps}{\mathbb{C}P}

\newcommand{\ol}[1]{\bar{#1}}

\newcommand{\Aut}{\operatorname{Aut}}

\newcommand{\contr}{\,\lrcorner\,}

\newcommand{\dist}{\operatorname{dist}}

\newcommand{\Hol}{\operatorname{Hol}}
\newcommand{\Hom}{\operatorname{Hom}}
\newcommand{\im}{\operatorname{Im}}

\newcommand{\lcm}{\operatorname{lcm}}

\newcommand{\pic}{\operatorname{Pic}}

\newcommand{\ric}{\operatorname{Ricci}}
\newcommand{\Ric}{\operatorname{Ric}}

\title{K\"{a}hler-Einstein metrics on strictly pseudoconvex domains}
\author{Craig van Coevering}
\address{Department of Mathematics, University of Science and Technology China, Hefei, Anhui Pronvince 230026, P.R. China}
\email{craigvan@ustc.edu.cn}
\date{December 25, 2010}
\keywords{K\"{a}hler-Einstein, pseudoconvex, CR structure, Sasakian}
\subjclass{Primary 32Q20, Secondary 32T15 }

\begin{document}

\begin{abstract}
Extending the results of S. Y. Cheng and S.-T. Yau it is shown that a strictly pseudoconvex domain $M\subset X$ in a complex manifold
carries a complete K\"{a}hler-Einstein metric if and only if its canonical bundle is positive, i.e. admits an Hermitian connection with
positive curvature.  We consider the restricted case in which the CR structure on $\partial M$ is normal.
In this case $M$ must be a domain in a resolution of the Sasaki cone over $\partial M$.  We give a condition on a
normal CR manifold which it cannot satisfy if it is a CR infinity of a K\"{a}hler-Einstein manifold.  We are able to mostly
determine those normal CR 3-manifolds which can be CR infinities.

We give many examples of K\"{a}hler-Einstein strictly pseudoconvex manifolds on bundles and resolutions.
In particular, the tubular neighborhood of the zero section of every negative holomorphic vector bundle on a compact complex manifold
whose total space satisfies $c_1 <0$ admits a complete K\"{a}hler-Einstein metric.
\end{abstract}

\maketitle

\tableofcontents

\section{Introduction}

S. Y. Cheng and S.-T. Yau proved in~\cite{ChenYau} that a bounded strictly pseudoconvex domain in $\C^n$ admits a complete negative scalar curvature K\"{a}hler-Einstein metric.  Their arguments also extended to other types of domains, such as a pseudoconvex domain which
is the intersection of pseudoconvex domains with $C^2$ boundary.
Many cases of domains in arbitrary complex manifolds are already dealt with in~\cite{ChenYau}, and in~\cite{MokYau}.
In~\cite{MokYau} N. Mok and S.-T. Yau proved the existence of a K\"{a}hler-Einstein metric on strictly pseudoconvex domains under some hypotheses.  These include, in particular, domains in Stein manifolds and domains which admit a negative Ricci curvature metric.
This article considers the existence of a complete negative scalar curvature K\"{a}hler-Einstein metric on a strictly pseudoconvex domain of an arbitrary complex manifold.

\begin{thm}\label{thm:main}
Suppose $M$ is a strictly pseudoconvex domain in $M'$.  Then $M$ admits an unique complete K\"{a}hler-Einstein metric of negative scalar
curvature if and only if $\mathbf{K}_M$ is positive, i.e. admits an Hermitian metric with positive curvature.
\end{thm}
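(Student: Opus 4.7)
The necessity direction is immediate. If $\omega$ is a complete K\"ahler metric on $M$ with $\Ric(\omega)=-\omega$, then the volume form $\omega^n/n!$ induces a Hermitian metric on $\mathbf{K}_M^{-1}$, and the dual Hermitian metric on $\mathbf{K}_M$ has curvature $-\Ric(\omega)=\omega>0$, so $\mathbf{K}_M$ is positive.

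For the sufficiency direction, the plan is to follow the Cheng--Yau strategy: produce a suitable complete K\"ahler background metric on $M$ and then solve a complex Monge--Amp\`ere equation for the K\"ahler--Einstein perturbation. First, using the positivity hypothesis, I would fix a Hermitian metric $h$ on $\mathbf{K}_M$ whose curvature $\omega_h=-\sqrt{-1}\partial\bar\partial\log h$ is a strictly positive $(1,1)$-form on $M$. Using strict pseudoconvexity of $M\subset M'$, I would choose a smooth strictly plurisubharmonic defining function $\varphi$ with $\varphi<0$ on $M$ and $d\varphi\neq 0$ on $\partial M$; near $\partial M$ the form $-\sqrt{-1}\partial\bar\partial\log(-\varphi)$ is K\"ahler and induces a complete metric of asymptotically complex-hyperbolic type, locally modelled on the Bergman metric of the unit ball. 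Combining $\omega_h$ with a plurisubharmonic term behaving like $-\log(-\varphi)$ near the boundary, via a suitable cut-off, yields a smooth complete K\"ahler form $\omega_0$ on $M$ such that $\Ric(\omega_0)+\omega_0=\sqrt{-1}\partial\bar\partial F$ for a smooth function $F$ which is bounded together with sufficiently many derivatives measured in $\omega_0$; the boundedness of $F$ is precisely what the positivity of $\mathbf{K}_M$ delivers.

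The K\"ahler--Einstein equation $\Ric(\omega_0+\sqrt{-1}\partial\bar\partial u)=-(\omega_0+\sqrt{-1}\partial\bar\partial u)$ is then equivalent to the complex Monge--Amp\`ere equation
\begin{equation*}
(\omega_0+\sqrt{-1}\partial\bar\partial u)^n = e^{u+F}\,\omega_0^n,
\end{equation*}
which I would attack via the continuity method along $e^{tu_t+F}\omega_0^n$, $t\in[0,1]$, starting from the trivial solution at $t=0$. The principal obstacle is establishing a priori estimates on the noncompact manifold $M$ whose metric degenerates at $\partial M$. For the $C^0$ estimate I would use Yau's generalized maximum principle on the complete manifold, combined with barrier functions built from powers of $-\varphi$ adapted to the asymptotic geometry, invoking the boundedness of $F$. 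The second order estimate follows from Yau's Laplacian inequality once one knows that $\omega_0$ has bounded bisectional curvature (inherited from the asymptotic model near $\partial M$ and from compactness away from $\partial M$), and higher regularity is standard Schauder theory on balls of uniform size supplied by the bounded geometry. Openness in the continuity parameter is straightforward from the implicit function theorem on weighted H\"older spaces adapted to the asymptotic geometry.

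Uniqueness of the complete K\"ahler--Einstein metric follows by writing a second solution as $\omega_0+\sqrt{-1}\partial\bar\partial v$ for a potential $v$ satisfying a Monge--Amp\`ere equation of the same type, and applying Yau's maximum principle on the complete manifold, which forces $v$ to be constant and hence zero after normalisation.
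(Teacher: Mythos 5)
Your necessity argument and your overall Cheng--Yau strategy (background metric plus Monge--Amp\`ere) match the paper, but there are two genuine gaps in the sufficiency and uniqueness steps.

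The first gap is in the construction of the background metric. You take a Hermitian metric $h$ on $\mathbf{K}_M$ with positive curvature \emph{on the open manifold $M$} and assert that ``the boundedness of $F$ is precisely what the positivity of $\mathbf{K}_M$ delivers.'' It does not: positivity of $\Theta_h$ on $M$ gives no uniformity as one approaches $\partial M$ --- the curvature form may degenerate or blow up there, and then neither your glued form $\omega_0$ is manifestly positive in the transition region (a positive form plus an exact $i\partial\bar\partial$ term from a cut-off need not stay positive), nor is the Ricci potential $F$ bounded. The paper's proof supplies exactly the missing step: it uses the $1$-convexity of $M$ (which follows from strict pseudoconvexity) to replace $h$ by a metric on $\mathbf{K}_{M'}$ whose curvature is positive on a whole neighborhood of $\ol{M}$. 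Concretely, one cuts off between $h$ (kept only on a deep interior set $\{-\phi>R\}$ containing the exceptional set $E$ of the Remmert reduction) and an arbitrary smooth metric $h'$ on $\mathbf{K}_{M'}$, and then twists by $e^{-A\psi}$ where $\psi$ is plurisubharmonic on $M'$ and \emph{strictly} plurisubharmonic away from $E$; for $A\gg 0$ this restores positivity everywhere up to and across $\partial M$. Only with $\omega_0$ positive and smooth on $\ol{M}$ does the explicit formula
\begin{equation*}
F=\log\left[\frac{e^f (-\phi)^{-(n+1)}\omega_0^n}{(\omega_0 -dd^c \log(-\phi))^n}\right]
\end{equation*}
extend continuously to $\partial M$ (where it equals $\log\bigl(e^f\omega_0^n/(|d\phi|^2(dd^c\phi)^n)\bigr)$), giving $F\in C^\infty(\ol{M})$, bounded geometry, and the hypotheses of the Cheng--Yau existence theorem. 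Without this extension step your argument does not close.

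The second gap is in uniqueness. You write a second complete K\"ahler--Einstein metric as $\omega_0+\sqrt{-1}\partial\bar\partial v$ and apply the maximum principle to $v$. But an arbitrary second solution need not admit a globally defined, let alone bounded, potential relative to $\omega_0$, and boundedness is exactly what the generalized maximum principle requires. The paper instead proves uniqueness via the Yau Schwarz lemma: for two complete negatively curved K\"ahler--Einstein metrics related by a biholomorphism $\sigma$ one sets $f=\sigma^*\mu_{M_2}/\mu_{M_1}$, derives $\Delta f\geq n\lambda f-n\lambda f^{(n+1)/n}$ from the arithmetic--geometric mean inequality, and concludes $\sup f\leq 1$ by the maximum principle of Tian--Yau; applying this to $\sigma$ and $\sigma^{-1}$ forces the volume forms, hence the Ricci forms, hence the metrics to agree. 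This compares the metrics directly without any potential, and is the argument you need here.
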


Strict pseudoconvexity means that the boundary $S:=\partial M$ has a positive CR structure.
We consider the case in which the CR structure on $S$ is in addition normal, that is, admits transverse
vector field preserving it.  We prove that any strictly pseudoconvex manifold with a normal CR structure on $S$
must be a domain in a resolution of the Sasaki cone $C(S)$ of the natural Sasaki structure on $S$.
This severely restricts the strictly pseudoconvex manifolds with a normal CR structure on the boundary.
In particular, a domain in a Stein manifold must be in $\C^n$ and have a boundary diffeomorphic to $\mathbb{S}^{2n-1}$
with CR structure a deformation of the standard one.  We consider the following:
\begin{prob}\label{prob:bound-K-E}
Which positive normal CR manifold $(S,D,J)$ is the conformal boundary of K\"{a}hler-Einstein manifold?
\end{prob}
There are far too many CR structures on even simple manifolds for this to be a tractable problem in general.
This is true even for $S=\mathbb{S}^{2n-1}$.  It is shown in~\cite{BoyGalKol,BoyGalKolThom} that there are inequivalent families
of CR structures on all odd dimensional spheres with the number of deformation classes growing doubly exponentially with the
dimension.  In particular, $\mathbb{S}^5$ has 68 inequivalent deformation classes.  More interesting is that these CR structures
have associated Sasaki-Einstein metrics.  But in the present context, we prove here that they cannot be the conformal boundary
of a K\"{a}hler-Einstein manifold.  We prove that any simply connected normal CR manifold $(S,D,J)$ satisfying the topological condition
for a compatible Sasaki-Einstein metric, $c_1(D)=0$ and $c_1^B >0$, are negative examples of \ref{prob:bound-K-E}.
Thus not surprisingly, those $(S,D,J)$ which can be the boundary at infinity of a complete Ricci-flat manifold as considered in~\cite{vanCo1} are excluded.  We are able to mostly answer Problem~\ref{prob:bound-K-E} in dimension 3, just leaving open some cases of finite quotients of $\mathbb{S}^3$.

We give many examples in which Theorem~\ref{thm:main} applies.  This gives many examples with normal CR structures at infinity
and otherwise.  An easy case is that of negative holomorphic bundles over a compact complex manifold.
\begin{cor}\label{cor:main}
Let $\pi:\mathbf{E}\rightarrow N$ be a negative holomorphic bundle over a compact complex manifold $N$.
If $-c_1(M)-c_1(\mathbf{E})>0$ on $N$, then the disk subbundles $D\subset \mathbf{E}$ admit unique complete K\"{a}hler-Einstein metrics of negative scalar curvature.
\end{cor}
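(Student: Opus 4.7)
\medskip
\noindent\textbf{Proof proposal.} The plan is to reduce Corollary~\ref{cor:main} to Theorem~\ref{thm:main} by verifying its two hypotheses for a disk subbundle $D$ of $\mathbf{E}$: first, that $D$ is strictly pseudoconvex in the total space $\mathbf{E}$, and second, that its canonical bundle $\mathbf{K}_D$ admits an Hermitian metric of positive curvature.

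For strict pseudoconvexity, one uses Grauert's characterization of negativity: because $\mathbf{E} \to N$ is negative, it admits an Hermitian metric $h_{\mathbf{E}}$ such that the norm function $\rho(v) := |v|^2_{h_{\mathbf{E}}}$ is strictly plurisubharmonic on a neighborhood of the zero section. The sublevel sets $D_{\epsilon} = \{\rho < \epsilon\}$ for small $\epsilon > 0$ therefore have strictly pseudoconvex boundary $\{\rho = \epsilon\}$. Any small disk subbundle $D$ can be put in this form after rescaling $h_{\mathbf{E}}$.

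For positivity of the canonical bundle, the relative cotangent sequence for $\pi : \mathbf{E} \to N$ yields the standard identification $\mathbf{K}_{\mathbf{E}} \cong \pi^{*}(\mathbf{K}_N \otimes \det \mathbf{E}^{*})$, so $\mathbf{K}_D = \pi^{*}(\mathbf{K}_N \otimes \det \mathbf{E}^{*})|_D$. The hypothesis $-c_1(N) - c_1(\mathbf{E}) > 0$ on $N$ says exactly that $\mathbf{K}_N \otimes \det \mathbf{E}^{*}$ is a positive line bundle on $N$, so it carries an Hermitian metric $h_0$ whose curvature form $\omega_0$ is positive on $N$. Pulled back, the metric $\pi^{*} h_0$ on $\mathbf{K}_D$ has curvature $\pi^{*} \omega_0$, which is positive in horizontal directions but vanishes on the fibers. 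To supply fiberwise positivity, I would twist by a weight involving $\rho$, setting
\[
h := e^{-\lambda \rho} \, \pi^{*} h_0
\]
for a constant $\lambda > 0$, so that the new curvature form is $\pi^{*} \omega_0 + \lambda \, i\partial\bar{\partial}\rho$. On a sufficiently small disk subbundle the summand $\pi^{*}\omega_0$ dominates in the horizontal directions while $i\partial\bar{\partial}\rho$ dominates fiberwise (by Griffiths negativity of $\mathbf{E}$), and the sum becomes strictly positive. Theorem~\ref{thm:main} then supplies the unique complete K\"ahler-Einstein metric of negative scalar curvature on $D$.

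The main obstacle is the last positivity estimate: the form $i\partial\bar{\partial}\rho$ is positive definite on vertical directions and the pulled-back $\omega_0$ is positive definite horizontally, but the mixed (horizontal--vertical) block of $i\partial\bar{\partial}\rho$ contributes indefinite terms that must be absorbed. The standard remedy is to shrink $\epsilon$ so that all mixed terms in $i\partial\bar{\partial}\rho$ are of order $O(\sqrt{\rho})$ compared to the leading fiber quadratic and the base form, and to choose $\lambda$ large enough relative to the base curvature to control the cross terms by a Cauchy--Schwarz estimate. Once this quantitative positivity is established on $D$, the conclusion is immediate from Theorem~\ref{thm:main}.
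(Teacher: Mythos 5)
Your proposal is correct in substance and reaches the result by the same reduction the paper uses (verify strict pseudoconvexity of the disk bundles, verify positivity of the canonical bundle, then invoke Theorem~\ref{thm:K-E}), but the second step is carried out by a genuinely different route. The paper compactifies the total space to $X=\mathbb{P}(\mathbf{E}\oplus\C)$, computes $\mathbf{K}_X=\tilde{\pi}^*\mathbf{K}_N\otimes\tilde{\pi}^*\det(\mathbf{E})^{-1}\otimes\mathbf{L}^{r+1}$, identifies the divisor at infinity with $\mathbf{L}^{-1}$, uses Griffiths' computation (Proposition~\ref{prop:neg-taut}) to see that the tautological bundle $\mathbf{L}$ is negative, concludes $\mathbf{K}_X\otimes[kD_\infty]>0$ for $k>r+1$, and then applies Proposition~\ref{prop:quasi-proj} to get $\mathbf{K}_{\mathbf{E}}>0$ on the entire total space. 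You instead work directly on $\mathbf{E}$, writing $\mathbf{K}_{\mathbf{E}}=\pi^*(\mathbf{K}_N\otimes\det\mathbf{E}^*)$ and twisting the pulled-back positive metric by $e^{-\lambda\rho}$. Your route is more elementary (no compactification, no curvature computation for $\mathbf{L}$ on $\mathbb{P}(\mathbf{E})$); the paper's route buys the global statement $\mathbf{K}_X\otimes[kD_\infty]>0$ on a projective compactification, which it reuses elsewhere (e.g.\ for the quasi-projectivity of normal K\"ahler--Einstein manifolds).

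Two remarks on your estimate. First, the ``mixed horizontal--vertical terms'' you worry about are not actually there: choosing a local holomorphic frame normal at the base point (so $\theta=\partial h\,h^{-1}=0$ there), the cross terms $dw_i\wedge\ol{\partial}h_{i\ol{\jmath}}\ol{w}_{\ol{\jmath}}$ and $\partial h_{i\ol{\jmath}}w_i\wedge d\ol{w}_{\ol{\jmath}}$ in $\partial\ol{\partial}\rho$ vanish pointwise --- this is exactly the computation in Proposition~\ref{prop:neg-pseudo}. What remains is a base part which is $\geq 0$ by Griffiths negativity (and $>0$ off the zero section) plus the fiberwise positive part $h_{i\ol{\jmath}}dw_i\wedge d\ol{w}_{\ol{\jmath}}$. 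Hence $\sqrt{-1}\partial\ol{\partial}\rho\geq 0$ on all of $\mathbf{E}$ with kernel only the horizontal directions along the zero section, where $\pi^*\omega_0$ is positive; so $\pi^*\omega_0+\lambda\sqrt{-1}\partial\ol{\partial}\rho>0$ on the \emph{whole} total space for \emph{any} $\lambda>0$, no shrinking or Cauchy--Schwarz needed. Second, this matters for the statement: the corollary asserts the result for all disk subbundles, whereas your argument as written only certifies sufficiently small ones. With the observation above (or with the paper's compactification argument) one gets $\mathbf{K}_{\mathbf{E}}>0$ globally and hence every disk subbundle $\{h(v,v)<c\}$ is covered. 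You should also note explicitly that $N$ is projective (since $\det\mathbf{E}^*$ is positive), so that the cohomological hypothesis $-c_1(N)-c_1(\mathbf{E})>0$ does produce a positively curved Hermitian metric $h_0$ via the $\partial\ol{\partial}$-lemma; this is implicit in the paper as well.
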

We also construct some examples on resolutions of hypersurface singularities and on some familiar resolutions of quotient singularities.

\section{Background}

Let $S$ be a real $2n-1$-dimensional manifold.  A \emph{CR structure} on $S$ is a pair $(D,J)$ consisting of a distribution
$D\subset TS$ of real $2n-2$-dimensional hyperplanes and an almost complex structure $J$ on $D$ such that, if
$D^{1,0} \subset D\otimes\C \subset TS\otimes\C$ denotes the type $(1,0)$-vectors, the \emph{formal integrability} condition holds:
\begin{equation}\label{eq:formal-int}
[D^{1,0} ,D^{1,0} ]\subseteq D^{1,0}.
\end{equation}
The \emph{Levi form} $\mathbf{L}^D :D\times D\rightarrow TS/D $ is defined by $\mathbf{L}^D (X,Y)=-[X,Y]\ \mod D$,
for $X,Y \in\Gamma(D)$.

It is easy to check that (\ref{eq:formal-int}) is equivalent to both $[X,JY] +[JX,Y] \in\Gamma(D)$ and the vanishing of
the Nijenhuis tensor
\begin{equation}\label{eq:Nijen}
\mathcal{N}_J(X,Y)=[JX,JY]-[X,Y]-J\bigl([X,JY]-[JX,Y]\bigr)=0,\quad X,Y\in\Gamma(D).
\end{equation}
Note that the former condition implies $\mathbf{L}^D$ is $J$-invariant, i.e. $\mathbf{L}^D(J\cdot,J\cdot) =\mathbf{L}^D(\cdot,\cdot)$.

We will always assume $S$ is orientable, so $TS/D$ is a trivial real line bundle.  Then there is a 1-form $\eta$ with
$D =\ker\eta$, and we may identify $\mathbf{L}^D =d\eta|_D$.  Note that $\mathbf{L}^D$ is only defined up to a choice of orientation
of $TS/D$ and a positive conformal factor.
\begin{defn}
The CR structure $(D,J)$ is \emph{strictly pseudoconvex} if, for a given orientation of $TS/D$, the Levi form $\mathbf{L}^D$ is positive,
i.e. $\mathbf{L}^D (X,JX)>0$ for any non-zero $X\in D$.
\end{defn}

Note that formal integrability does not in general imply integrability, that is that $(S,D,J)$ is a real hypersurface in a complex manifold.
The analogue of the Newlander-Nirenberg theorem only holds for analytic CR manifolds~\cite{Niren1,Niren2}.

\begin{defn}
A \emph{finite manifold} is a pair $(M,M')$ of complex manifolds with $M$ an open relatively compact submanifold of $M'$ with smooth
non-empty boundary $\partial M$.
\end{defn}

Let $\phi$ be a defining function of $M\subset M'$.  That is, $\phi$ is $C^\infty$ in a neighborhood of $\ol{M}$, $M=\{\phi <0\}$,
and $d\phi\neq 0$ on $\partial M$.

Let $J$ denote the complex structure of $M'$.
The real $2n-1$-dimensional manifold $S:=\partial M$ has the CR structure $(D,J)$ where $D:= TS \cap J TS$ and $J$ is restricted to $D$.

Define a 1-form on $S$
\begin{equation}
\eta:= d^c \phi |_S ,
\end{equation}
where $d^c := \frac{\sqrt{-1}}{2}(\ol{\partial} -\partial)$.  Then it is easy to see that $D=\ker\eta$, and the Levi form is
$\mathbf{L}^D =d\eta|_D$, which is a $(1,1)$-form on $D$ as follows from the comments after (\ref{eq:Nijen}).
\begin{defn}
We say that the pair $(M,M')$ is \emph{strictly pseudoconvex} or that $M$ is a strictly pseudoconvex domain, if the induced CR-structure
$(D,J)$ is.
\end{defn}

One can check that by altering $\phi$, for instance considering $e^{A\phi}\phi$ for a constant $A>0$, one may assume that
$\phi$ is strictly plurisubharmonic on a neighborhood of $S=\partial M$.  That is, $\sqrt{-1}\partial\ol{\partial}\phi$ is
a positive $(1,1)$-form.

A strictly pseudoconvex domain $M\subset M'$ is a particular type of 1-convex manifold.  A complex manifold $X$ is said to
be 1-convex if there is a Stein space $Y$, a proper holomorphic surjective mapping $\pi:X\rightarrow Y$ satisfying
$\pi_* \mathcal{O}_X =\mathcal{O}_Y$, and a finite set $A\subset Y$ such that if $E=\pi^{-1}(A)$ the map
$\pi: X\setminus E\rightarrow Y\setminus A$ is a biholomorphism.  Then $Y$ is called the Remmert reduction of $X$ and
$E$ is called the exceptional set of $X$.  Note that $E$ is the maximal compact analytic subvariety of $X$, i.e. the union
of all compact analytic subvarieties of dimension $\geq 1$.

Let $\xi\in\Gamma(TS)$ be a vector field on $S$ so that
\begin{equation}\label{eq:dir-sum}
TS=D\oplus\R\xi.
\end{equation}
We extend $J$ to a $(1,1)$ tensor $\Phi$ on $S$ by
\begin{equation}
\Phi(X) =JX,\ \text{for }X\in D,\text{ and }\Phi\xi =0.
\end{equation}
\begin{defn}
The CR structure $(D,J)$ on $S$ is \emph{normal} if there is a $\xi$ satisfying (\ref{eq:dir-sum}) whose flow preserves $(D,J)$.
Or in other words $\mathcal{L}_\xi \Phi=0$.
\end{defn}
Suppose $\eta(\xi)>0$ for an oriented contact form $\eta$.  So we may assume by changing by a conformal factor that
$\eta$ is the unique 1-form with $\ker\eta=D$ and $\eta(\xi)=1$, i.e. $\xi$ is the Reeb vector field of $\eta$.
Then $(D,J)$ is normal if and only if
\begin{equation}\label{eq:normal}
\mathcal{N}_\Phi =\xi\otimes d\eta.
\end{equation}
In fact, it is easy to check that (\ref{eq:normal}) is equivalent to (\ref{eq:Nijen}) and $\mathcal{L}_\xi \Phi=0$.
And these two conditions are equivalent to the integrability of the almost complex structure on the \emph{cone}
$C(S):=\R_+ \times S$
\begin{equation}\label{eq:cone-cst}
I(X) =\Phi(X) -\eta(X)r\partial_r,\quad I(r\partial_r) =\xi,
\end{equation}
where $r$ is the radial coordinate on $\R_+$ and $X\in TS$.

If the CR structure $(D,J)$ is positive and normal, then with $\xi$ positively oriented as above, we have a natural metric
\begin{equation}
g(X,Y):=\frac{1}{2}d\eta(X,\Phi Y) +\eta(X)\eta(Y),\quad X,Y\in TS.
\end{equation}
In this case $S$ has a special type of metric contact structure, known as a \emph{Sasaki structure}, which we denote by
$(g,\xi,\eta,\Phi)$.  See~\cite{BoyGal} for more details.

We denote by $\mathfrak{CR}(S,D,J)$ the automorphism group of the CR manifold $(S,D,J)$ and its Lie algebra by $\mathfrak{cr}(S,D,J)$.
R. Schoen~\cite{Schoen} proved the following result.
\begin{thm}
The CR automorphism group of a strictly pseudoconvex CR manifold $(M,D,J)$ is proper unless $M$ is either $\mathbb{S}^{2n-1}$ or the Heisenberg group $\mathcal{H}_{2n-1}$ with the standard CR structures.
\end{thm}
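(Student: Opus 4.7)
The plan is to argue by contradiction via a blow-up/concentration-compactness analysis modeled on Ferrand's extension of the Obata theorem in conformal Riemannian geometry, but with the elliptic conformal machinery replaced by the subelliptic pseudo-Hermitian framework of Folland--Stein and the CR normal form of Chern--Moser.

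Fix an oriented contact form $\eta$ with $\ker\eta=D$ and Levi form $d\eta|_D$ positive definite, and let $g_\eta$ denote the associated Webster metric. Every $\phi\in\mathfrak{CR}(S,D,J)$ satisfies $\phi^*\eta=\lambda_\phi\,\eta$ for some $\lambda_\phi\in C^\infty(S,\R_+)$. If $\mathfrak{CR}(S,D,J)$ acts non-properly, there exist $\phi_k\in\mathfrak{CR}(S,D,J)$ and $p_k\to p$ with $q_k:=\phi_k(p_k)\to q$ but no convergent subsequence of $\phi_k$ in $\mathfrak{CR}(S,D,J)$. Were both $\{\lambda_{\phi_k}\}$ and $\{\lambda_{\phi_k^{-1}}\}$ uniformly bounded, the $\phi_k$ would be uniformly Lipschitz in $g_\eta$ and Arzel\`a--Ascoli would furnish a subsequential limit, a contradiction. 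Replacing $\phi_k$ by $\phi_k^{-1}$ if necessary, we may assume $\mu_k:=\max_S\lambda_{\phi_k}\to\infty$, attained at $x_k\in S$ with $x_k\to x_\infty$.

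Introduce parabolic Heisenberg dilations $\delta_{\mu_k^{1/2}}$ in CR normal coordinates at $x_k$, and pull back the pseudo-Hermitian data of $S$ by $\delta_{\mu_k^{-1/2}}\circ\phi_k$. The choice of $x_k$ as a maximum of $\lambda_{\phi_k}$ normalises the conformal factor of the rescaled forms to be uniformly bounded on compact subsets of $\R^{2n-1}$. Standard subelliptic Folland--Stein estimates, together with the rigidity of the Chern--Moser normal form, then yield a diagonal subsequence converging in $C^\infty_{\mathrm{loc}}$ to a complete, strictly pseudoconvex CR structure on $\R^{2n-1}$ whose Chern--Moser curvature vanishes identically (its components have strictly positive weight under parabolic dilation and therefore scale to zero). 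By Chern--Moser this limit is globally CR-equivalent to the standard Heisenberg group $\mathcal{H}_{2n-1}$, and the rescaled $\phi_k$ converge to a non-trivial family of CR embeddings of $\mathcal{H}_{2n-1}$ into $(S,D,J)$. The existence of such a family forces the Chern--Moser curvature tensor of $(S,D,J)$ to vanish on an open neighbourhood of $q$, and unique continuation for the Chern--Moser system promotes this to vanishing on all of $S$. Thus $(S,D,J)$ is locally CR-flat, carries a developing map into $\mathbb{S}^{2n-1}$ for the $\mathrm{SU}(n,1)$ Cartan geometry, and a standard monodromy argument, using strict pseudoconvexity and the non-properness of $\mathfrak{CR}(S,D,J)$ to rule out non-trivial deck transformations, leaves only $S=\mathbb{S}^{2n-1}$ in the compact case or $S=\mathcal{H}_{2n-1}$ in the non-compact case.

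The principal obstacle is the rescaling step itself: extracting a smooth limit from the pull-backs $\delta_{\mu_k^{-1/2}}\circ\phi_k$ requires uniform higher-order Folland--Stein bounds on the pulled-back pseudo-Hermitian form, which in turn demand the full Chern--Moser normal form together with delicate subelliptic a priori estimates on the Tanaka--Webster connection. The subsequent identification of the limit with $\mathcal{H}_{2n-1}$ and the developing-map globalisation are comparatively routine once the blow-up limit has been successfully extracted; the real work is in setting up a normalisation framework in which the noncompactness of $\mathfrak{CR}(S,D,J)$ translates unambiguously into a conformal blow-up, and in which the Chern--Moser invariants behave tensorially under that blow-up.
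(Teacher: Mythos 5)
The paper does not prove this statement: it is quoted verbatim as a theorem of R.~Schoen and attributed to \cite{Schoen}, so there is no internal proof to compare against. Judged on its own terms, your proposal is a reasonable outline of the Ferrand--Obata/Schoen strategy (non-properness $\Rightarrow$ blow-up of the conformal factor $\Rightarrow$ rescaling to a flat model $\Rightarrow$ developing map), but as written it is a programme rather than a proof, and two steps in particular would not survive scrutiny.

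First, the passage from ``the rescaled $\phi_k$ converge to CR embeddings of $\mathcal{H}_{2n-1}$'' to ``the Chern--Moser tensor vanishes on an open neighbourhood of $q$'' is not justified. The images of the rescaled charts are shrinking neighbourhoods of $q$, so a limit of embeddings tells you nothing directly about an open set of fixed size. What the genuine argument requires is a lower bound showing that $\lambda_{\phi_k}\to\infty$ \emph{uniformly on a fixed open set} $U$ whose images concentrate at $q$, combined with the weighted transformation law of the Chern--Moser tensor under $\eta\mapsto\lambda\eta$ (it scales with a negative conformal weight), to force the invariant to vanish at $q$ and then at every point of $\overline{\bigcup_k\phi_k(U)}$. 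Establishing that uniform lower bound from the single normalisation $\mu_k=\max_S\lambda_{\phi_k}$ is exactly the delicate point, and you have not addressed it. Second, your appeal to ``unique continuation for the Chern--Moser system'' to promote local flatness to global flatness is not an available tool: strictly pseudoconvex CR structures need not be real-analytic (nor even embeddable in dimension $3$), and no unique continuation theorem for the Chern--Moser curvature is known. The actual proofs (Schoen's, and the later Cartan-geometric arguments of Frances for general parabolic geometries) propagate flatness by exploiting the dynamics of the non-proper action itself --- the set where the curvature is nonzero is shown to be invariant and forced to be empty --- not by an elliptic continuation principle. A smaller issue: boundedness of $\lambda_{\phi_k}$ and $\lambda_{\phi_k^{-1}}$ gives equicontinuity with respect to the Carnot--Carath\'eodory distance of $\eta$, not immediately with respect to the Riemannian Webster metric $g_\eta$ (whose Reeb component under $\eta\mapsto\lambda\eta$ involves derivatives of $\lambda$); the Arzel\`a--Ascoli step should be run in the sub-Riemannian metric. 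Until the concentration lower bound and the globalisation of flatness are supplied, the argument has genuine gaps at its two load-bearing joints.
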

\begin{cor}
If $(M,D,J)$ is a compact strictly pseudoconvex CR manifold, then the CR automorphism group $\mathfrak{CR}(S,D,J)$ is compact unless $M=\mathbb{S}^{2n-1}$ with the standard CR structure, in which case the CR automorphism group is $\operatorname{PSU}(1,n)$.
\end{cor}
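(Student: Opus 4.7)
The plan is to apply Schoen's theorem directly and combine it with a standard fact about proper actions on compact spaces. Recall that a continuous action of a Lie group $G$ on a locally compact space $X$ is proper when, for every compact $K\subset X$, the set $G_K=\{g\in G : gK\cap K\neq\emptyset\}$ is compact. If $M$ is compact, take $K=M$: then $G_K=G$, so properness immediately gives compactness of $G$.

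Applying this to $G=\mathfrak{CR}(S,D,J)$ (with $S=M$ compact), the preceding theorem says properness fails only for the standard CR structures on $\mathbb{S}^{2n-1}$ or the Heisenberg group $\mathcal{H}_{2n-1}$. The Heisenberg group is not compact, so it cannot be our $M$; the only remaining exception is $M=\mathbb{S}^{2n-1}$ with its standard CR structure. Hence in all other cases $\mathfrak{CR}(S,D,J)$ is compact.

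It remains to identify the automorphism group in the sphere case. I would realize $\mathbb{S}^{2n-1}$ as the boundary of the unit ball $B^n\subset\C^n$ equipped with its induced CR structure and note that every biholomorphism of $B^n$ extends smoothly to a CR automorphism of the boundary, yielding an inclusion $\operatorname{PSU}(1,n)\hookrightarrow \mathfrak{CR}(\mathbb{S}^{2n-1},D,J)$. The reverse inclusion is classical: by Fefferman-type boundary regularity (or the older work of Tanaka and Chern--Moser on flat CR structures), every CR automorphism of the standard $\mathbb{S}^{2n-1}$ extends to a biholomorphism of $B^n$, and the holomorphic automorphism group of $B^n$ is exactly $\operatorname{PSU}(1,n)$.

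There is really no substantial obstacle here once Schoen's theorem is in hand; the only part requiring care is citing the correct classical identification $\Aut(B^n)=\operatorname{PSU}(1,n)$ and the extension result that upgrades a boundary CR automorphism of the round sphere to a global biholomorphism of the ball.
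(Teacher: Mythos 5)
Your argument is correct and is exactly the intended deduction: the paper states this corollary without proof as an immediate consequence of Schoen's theorem, relying on precisely the observation that a proper action on a compact space forces the group to be compact, that the Heisenberg group is excluded by compactness of $M$, and on the classical identification of the CR automorphism group of the round sphere with the automorphism group of the ball. Nothing further is needed.
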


It will be useful to consider the space of compatible Sasaki structures on a normal strictly pseudoconvex CR manifold $(S,D,J)$.
See~\cite{BoyGalSim1,BoyGalSim2} for more details.
\begin{defn}
Let $(S,D,J)$ be a strictly pseudoconvex CR structure.  A vector field $X\in\mathfrak{cr}(S,D,J)$ is \emph{positive} if
$\eta(X)>0$ for an oriented contact form $\eta$.  We denote by $\mathfrak{cr}^+(S,D,J)$ the space of all positive elements of $\mathfrak{cr}(S,D,J)$.
\end{defn}
It is not difficult to see that $\mathfrak{cr}^+(S,D,J)$ is isomorphic the space of Sasaki structures compatible with $(D,J)$,
$\mathfrak{cr}^+(S,D,J)$ is an open convex cone in $\mathfrak{cr}(S,D,J)$, and is invariant under the adjoint action of
$\mathfrak{CR}(S,D,J)$.
\begin{defn}
Let $(S,D,J)$ be a normal strictly pseudoconvex CR manifold.  The \emph{Sasaki cone} $\kappa(S,D,J)$ is the moduli space of Sasaki structures
compatible with $(D,J)$ on $S$.  We have
\begin{equation}
\kappa(S,D,J) =\mathfrak{cr}^+(S,D,J)/\mathfrak{CR}(S,D,J).
\end{equation}
\end{defn}

Choose a maximal torus $T_k \subset\mathfrak{CR}(S,D,J)$ of rank $k$, $1\leq k\leq n$, with Lie algebra $\mathfrak{t}_k$.  Then if
$\mathfrak{t}_k^+$ denotes the subspace of positive elements, we have
\begin{equation}\label{eq:Sasaki-cone}
\kappa(S,D,J) =\mathfrak{t}_k^+/\mathcal{W},
\end{equation}
where $\mathcal{W}$ is the Weyl group of $T_k \subset K\subseteq\mathfrak{CR}(S,D,J)$ for a maximal compact subgroup $K$.  Of course,
$K=\mathfrak{CR}(D,J)$ unless $\mathfrak{CR}(D,J)=\operatorname{PSU}(1,n)$.  Let $Z_k \subset\mathfrak{t}_k$ be the lattice of \emph{integral} elements, that is $Z_k =\{\xi\in\mathfrak{t}_k : \exp(2\pi\xi) =1\}$.  And define $Z^+_k =Z_k \cap\mathfrak{t}^+_k$.  Then
every $\xi\in Z^+_k$ defines
a quasi-regular Sasaki structure, that is all the orbits of the Reeb vector field $\xi$ close to give a locally free
$\operatorname{U}(1)$-action.  The $\operatorname{U}(1)$-action on $S$ extends to a locally free holomorphic $\C^*$-action on $C(S)$.
And $C(S)$ is biholomorphic to the total space minus the zero section $\mathbf{L}^\times$ of a negative holomorphic orbibundle over a
K\"{a}hler orbifold $W$ (cf.~\cite{BoyGal}).

\section{The K\"{a}hler-Einstein metric}

\subsection{The approximate metric}

Let $M\subset M'$ be a smooth strictly pseudoconvex domain in a K\"{a}hler manifold $(M',g_0)$.
And let $\phi$ be a plurisubharmonic defining function which is strictly plurisubharmonic on a neighborhood of
$\partial M$.  Then $h=-\log(-\phi)$ is strictly plurisubharmonic near $\partial M$, and $dd^c h$ is
the K\"{a}hler form of a metric near $\partial M$ which in coordinates is
\begin{equation}
h_{i\ol{\jmath}} =\frac{\phi_{i\ol{\jmath}}}{-\phi} +\frac{\phi_i \phi_{\ol{\jmath}}}{\phi^2}.
\end{equation}
Computation gives
\begin{equation}
h^{i\ol{\jmath}} =(-\phi)\left(\phi^{i\ol{\jmath}} +\frac{\phi^i \phi^{\ol{\jmath}}}{\phi-|d\phi|^2}  \right),
\end{equation}
where $\phi^{i\ol{\jmath}} =(\phi_{i\ol{\jmath}})^{-1}$, $\phi^i =\sum\phi^{i\ol{\jmath}}\phi_{\ol{\jmath}},$ and
$|d\phi|^2 =\phi^{i\ol{\jmath}}\phi_i \phi_{\ol{\jmath}}$.

It is also easy to see that
\begin{equation}
h^{i\ol{\jmath}} h_i h_{\ol{\jmath}} =\frac{|d\phi|^2}{|d\phi|^2 -\phi} \leq 1.
\end{equation}
Thus since $h(x)\rightarrow\infty$ as $x\rightarrow\partial M$, the metric $h_{i\ol{\jmath}}$ is complete toward $\partial M$.
Therefore, \emph{a fortiori} the metric $g_{i\ol{\jmath}} =(g_0)_{i\ol{\jmath}} +h_{i\ol{\jmath}}$ with K\"{a}hler form
\begin{equation}
\omega =\omega_0 +dd^c h,
\end{equation}
is a complete K\"{a}hler metric on $M$.

\subsection{Existence of the metric}

We will consider the existence of a complete K\"{a}hler-Einstein metric on $M$, that is a K\"{a}hler metric $g$ with
\begin{equation}\label{eq:K-E}
\Ric_g =-\lambda g,\quad \lambda>0.
\end{equation}
For convenience we will set $\lambda=n+1$.  If $g$ is a complete K\"{a}hler metric on $M$ with K\"{a}hler form $\omega$,
suppose we have $F\in C^\infty$ with
\begin{equation}\label{eq:dd-bar-ric}
(n+1)\omega +\ric(\omega) =dd^c F.
\end{equation}
Then a solution to the Monge-Amp\`ere equation
\begin{equation}\label{eq:M-A}
(\omega +dd^c u)^n =e^{(n+1)u +F} \omega^n
\end{equation}
provides a K\"{a}hler metric $\omega'=\omega +dd^c u$ satisfying (\ref{eq:K-E}).  Equation (\ref{eq:M-A}) on noncompact
manifolds was extensively studied by S.-Y Cheng and S.-T. Yau~\cite{ChenYau}.  See also~\cite{TianYau}.
There it was proved that (\ref{eq:M-A}) has a unique solution if $F\in C^{3,\alpha}(M)$ and $(M,g)$ has bounded geometry.

We use this method to find a complete solution to (\ref{eq:K-E}) where $M\subset M'$ is a strictly pseudoconvex domain in a
K\"{a}hler manifold $M'$.  With $\phi$ a defining function of $M$ and $\omega_0$ a K\"{a}hler form on $M'$ we consider
the complete metric with K\"{a}hler form
\begin{equation}
\omega =\omega_0 -dd^c \log(-\phi).
\end{equation}

If a line bundle $\mathbf{L}$ is given by a system of charts and transition functions $(U_{\alpha}, g_{\alpha\beta})$,
then an Hermitian metric on $\mathbf{L}$ is given by a system $\{h_\alpha\}$ of smooth positive functions on $\{U_\alpha\}$
which satisfy $h_\alpha =|g_{\beta\alpha}|^2 h_\beta$ on $U_{\alpha}\cap U_{\beta}$.   In particular, we will use
that any other Hermitian metric $h'$ on $\mathbf{L}$ is of the form $h'=e^f h$ for $f\in C^\infty$.

An holomorphic line bundle $\mathbf{L}$ is \emph{positive} if it has an Hermitian metric $h$ such that the
curvature of the associated Chern connection, $\Theta_{\mathbf{L}} =-\partial\ol{\partial}\log h$, satisfies
$\frac{\sqrt{-1}}{2\pi}\Theta_{\mathbf{L}} >0$, i.e. is a positive $(1,1)$-form.

The following theorem is mostly due to S. Y. Cheng and S. T. Yau~\cite{ChenYau}.
\begin{thm}\label{thm:K-E}
Let $(M,M')$ be a strictly pseudoconvex finite manifold.   Then $M$ admits a complete K\"{a}hler-Einstein metric of negative scalar
curvature if and only if $\mathbf{K}_{M}$ is positive.
\end{thm}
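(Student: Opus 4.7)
The theorem has an easy direction and a hard direction. Necessity is essentially a formal consequence of the K\"ahler--Einstein equation: if $g$ is a complete K\"ahler--Einstein metric on $M$ satisfying $\Ric_g = -(n+1)g$, the volume form $\omega^n$ induces an Hermitian metric on $\mathbf{K}_M^{-1}$ whose Chern curvature equals $\ric(\omega)$, so the dual Hermitian metric on $\mathbf{K}_M$ has curvature $-\ric(\omega) = (n+1)\omega > 0$, proving $\mathbf{K}_M$ is positive.

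For sufficiency, the plan follows the scheme of~\cite{ChenYau}: construct a complete K\"ahler reference metric $\omega$ on $M$, translate the Einstein condition into a Monge--Amp\`ere equation of the form~\eqref{eq:M-A}, and appeal to the Cheng--Yau existence theorem for \eqref{eq:M-A} on complete K\"ahler manifolds of bounded geometry. The positivity hypothesis supplies an Hermitian metric $h$ on $\mathbf{K}_M$ with strictly positive Chern curvature $\Theta := \Theta_{\mathbf{K}_M}$ on $M$, and dually a positive volume form $\Omega$ on $M$. Fixing a plurisubharmonic defining function $\phi$ of $M$ (strictly plurisubharmonic near $\partial M$), following the preceding subsection one sets
\[
\omega := A\,\Theta - dd^c \log(-\phi),
\]
with $A > 0$ chosen large enough that $\omega$ is a (complete) K\"ahler form on all of $M$. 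A direct computation expresses the Ricci potential $F$ (the function with $(n+1)\omega + \ric(\omega) = dd^c F$) in terms of $\omega^n / \Omega$ and the K\"ahler potential of $\omega$, hence explicitly in terms of $\phi$ and the local densities of $h$.

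The principal technical obstacle is the verification of the two hypotheses of the Cheng--Yau existence theorem: (i) bounded geometry of $(M, \omega)$, meaning uniform bounds on the curvature tensor and all its covariant derivatives together with a positive lower bound on the injectivity radius, and (ii) sufficient regularity of $F$ up to $\partial M$, e.g.\ $F \in C^{3,\alpha}(\overline{M})$ with controlled boundary behavior. Both properties reduce to the same asymptotic analysis of $\omega$ near the strictly pseudoconvex boundary, where $\omega$ is uniformly modeled on the Bergman-type metric of the Siegel half-space; strict pseudoconvexity is precisely what makes this local model valid and yields the required uniform estimates. Once these are established, \cite{ChenYau} produces a smooth solution $u$ to~\eqref{eq:M-A}, and $\omega + dd^c u$ is the sought complete K\"ahler--Einstein metric. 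Uniqueness in the original problem follows from the uniqueness for~\eqref{eq:M-A}, via a maximum principle argument applied to the difference of the K\"ahler potentials of two putative solutions.
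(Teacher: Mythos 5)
Your overall scheme coincides with the paper's: reduce the Einstein equation to the Monge--Amp\`ere equation \eqref{eq:M-A} for a complete reference metric, invoke the Cheng--Yau existence theorem, and obtain the converse from the curvature of the volume-form metric on $\mathbf{K}_M$. But there is a genuine gap in your construction of the reference metric. You set $\omega = A\,\Theta - dd^c\log(-\phi)$, where $\Theta$ is the curvature of the hypothesized positively curved Hermitian metric $h$ on $\mathbf{K}_M$. That metric is given only on the open manifold $M$, so you have no control of $\Theta$ (or of the local densities of $h$) as one approaches $\partial M$: it may degenerate or blow up there. Everything downstream --- the claim that $F$ is $C^{3,\alpha}$ up to $\overline{M}$, the bounded geometry of $(M,\omega)$, and the assertion that $\omega$ is ``uniformly modeled'' on the Bergman-type metric near the boundary --- presupposes that the background form is a smooth positive $(1,1)$-form on a neighborhood of $\overline{M}$ in $M'$, which your $A\,\Theta$ need not be. (Note also that once $\Theta>0$ on $M$ is assumed, the constant $A$ plays no role in your formula, which suggests two distinct steps have been conflated.)

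The paper's proof devotes its first paragraph to exactly this missing step: the given positive metric $h$ on $\mathbf{K}_M$ is glued, over a set $\{-\phi>R\}$ containing the maximal compact analytic subset $E$, to an arbitrary smooth Hermitian metric $h'$ on $\mathbf{K}_{M'}$ near $\partial M$, and the result is twisted by $e^{-A\psi}$, where $\psi$ is pulled back from a strictly plurisubharmonic function on the Remmert reduction, so that $\sqrt{-1}\partial\ol{\partial}\psi\geq 0$ everywhere and $>0$ away from $E$. For $A\gg 0$ this yields a metric on $\mathbf{K}_{M'}$ with positive curvature on a full neighborhood of $\overline{M}$; only then is $\omega_0$ defined and the ratio in \eqref{eq:F} checked to extend smoothly to $\partial M$, where it equals $e^f\omega_0^n/\bigl(|d\phi|^2(dd^c\phi)^n\bigr)$. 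This gluing uses the $1$-convexity of $M$ in an essential way and is the one nontrivial geometric input beyond quoting Cheng--Yau; your proposal omits it. A minor further point: the theorem as stated does not assert uniqueness, and the paper derives uniqueness separately via the Schwarz-lemma argument of Proposition~\ref{prop:unique}, rather than from uniqueness for \eqref{eq:M-A}, since two complete K\"ahler--Einstein metrics on $M$ need not a priori differ by $dd^c$ of a potential relative to the same $\omega$.
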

\begin{proof}
Let $h$ be a positive Hermitian metric on $\mathbf{K}_{M}$ and let $h'$ be any connection on $\mathbf{K}_{M'}$.
Choose $\varsigma\in C^\infty(\R)$ with $\varsigma(x)=1$ for $x\geq 1$ and $\varsigma(x)=0$ for $x\leq 1/2$.
Set $\varsigma_R (x):=\varsigma(\frac{x}{R})$.  Consider the metric
$\tilde{h}=\varsigma_R(-\phi)h +(1-\varsigma_R(-\phi))h'$, which has positive curvature on $\{-\phi >R\}\subset M$.
Choose $R>0$ sufficiently small that this set contains the maximal compact analytic subset $E$ of $M$, and
choose a plurisubharmonic function $\psi$ on $M'$ which is strictly plurisubharmonic away from $E$.
Then $e^{-A\psi}\tilde{h}$ has positive curvature on a neighborhood of $M\subset M'$ for $A\gg 0$.

Suppose $h$ is a metric on $\mathbf{K}_{M'}$ with $\omega_0 =\frac{\sqrt{-1}}{(n+1)}\Theta_h$ positive on a neighborhood of
$M$.  Then the volume form $\frac{1}{n!}\omega^n_0$ defines an Hermitian metric on $\mathbf{K}_{M'}$ by
\begin{equation}\label{eq:can-metric}
\|\Omega\|^2 := \left(\frac{i}{2}\right)^n (-1)^{\frac{n(n-1)}{2}} \frac{\Omega\wedge\ol{\Omega}}{\omega_0^n},
\end{equation}
for $(n,0)$-form $\Omega$.  Then $\|\cdot\|^2 =e^f h$, for some $f\in C^\infty(M')$.  So we have
\begin{equation}\label{eq:omega0-Ricci}
(n+1)\omega_0 +\ric(\omega_0) =dd^c f.
\end{equation}
We define
\begin{equation}\label{eq:F}
F=\log\left[\frac{e^f (-\phi)^{-(n+1)}\omega_0^n}{(\omega_0 -dd^c \log(-\phi))^n}  \right].
\end{equation}
Then for the metric $\omega=\omega_0 -dd^c \log(-\phi)$ we have that $F$ satisfies (\ref{eq:dd-bar-ric}).
It is easy to see that $F\in C^\infty(\overline{M})$.  In fact one checks that
\begin{equation}
\frac{e^f (-\phi)^{-(n+1)}\omega_0^n}{(\omega_0 -dd^c \log(-\phi))^n}\Big | _{\partial M} =\frac{e^f \omega_0^n}{|d\phi|^2 (dd^c \phi)^n}\Big|_{\partial M}.
\end{equation}
Then the proof in~\cite{ChenYau} shows that (\ref{eq:M-A}) has a unique solution $u\in C^\infty(M)$.
The proof follows from an application of the generalized maximum principle to formulae of~\cite{Yau} to obtain the
necessary \emph{a priori} estimates.

The converse is clear.  Since if $g_0$ is K\"{a}hler-Einstein, then the curvature of (\ref{eq:can-metric}) satisfies
$\frac{\sqrt{-1}}{2\pi}\Theta =-\frac{1}{2\pi}\ric(g_0)>0$.
\end{proof}

\subsection{uniqueness}

The following uniqueness result is due to S.-T. Yau and follows from a more general Schwartz lemma.
\begin{prop}[\cite{MokYau}]\label{prop:unique}
Let $(M_1 ,g_1)$ and $(M_2,g_2)$ be complete complete K\"{a}hler-Einstein manifolds of negative scalar curvature, normalized
to have equal Einstein constants, $\lambda$.  If $\sigma :M_1 \rightarrow M_2$ is a biholomorphism, then $\sigma^* g_2 =g_1$.
\end{prop}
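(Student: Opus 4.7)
The plan is to deduce uniqueness from a Yau-type Schwarz lemma applied symmetrically to $\sigma$ and $\sigma^{-1}$. Let $\omega_1, \omega_2$ denote the K\"{a}hler forms of $g_1, g_2$ and, on $M_1$, define
\begin{equation*}
u := \log \frac{\sigma^*\omega_2^n}{\omega_1^n}.
\end{equation*}
Since $\sigma$ is a biholomorphism and both $g_i$ are K\"{a}hler-Einstein with $\Ric(g_i) = -\lambda g_i$, the Ricci form identity $\Ric(\omega) = -dd^c \log \omega^n$ gives $dd^c u = \lambda(\sigma^*\omega_2 - \omega_1)$. Taking the trace with respect to $g_1$ yields
\begin{equation*}
\Delta_{g_1} u \;=\; \lambda\bigl(\operatorname{tr}_{g_1}(\sigma^* g_2) - n\bigr).
\end{equation*}

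Next I would apply the arithmetic-geometric mean inequality to the eigenvalues of $\sigma^* g_2$ with respect to $g_1$, yielding $\operatorname{tr}_{g_1}(\sigma^* g_2) \geq n (\det \sigma^* g_2 / \det g_1)^{1/n} = n e^{u/n}$, with equality if and only if these eigenvalues are all equal (to $1$ when combined with $u=0$). Thus
\begin{equation*}
\Delta_{g_1} u \;\geq\; \lambda n\bigl(e^{u/n} - 1\bigr).
\end{equation*}
Since $(M_1,g_1)$ is complete and its Ricci curvature is bounded below (by $-\lambda$), the Omori-Yau generalized maximum principle applies: one can find a sequence $p_k \in M_1$ with $u(p_k) \to \sup u$, $|\nabla u|(p_k) \to 0$, and $\Delta_{g_1} u(p_k) \leq 1/k$. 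Feeding this into the differential inequality forces $\sup u \leq 0$, whence $\sigma^*\omega_2^n \leq \omega_1^n$ pointwise on $M_1$.

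Finally, I would run the identical argument with the biholomorphism $\sigma^{-1}:M_2\to M_1$ (the Einstein constants match by hypothesis, and $(M_2,g_2)$ is also complete with Ricci bounded below), giving $(\sigma^{-1})^*\omega_1^n \leq \omega_2^n$, i.e.\ $\omega_1^n \leq \sigma^*\omega_2^n$. Together these imply $\sigma^*\omega_2^n = \omega_1^n$, so $u\equiv 0$, and then $\operatorname{tr}_{g_1}(\sigma^*g_2)=n$ with $\det\sigma^*g_2=\det g_1$; the equality case of AM-GM forces every eigenvalue of $\sigma^*g_2$ relative to $g_1$ to be $1$, i.e.\ $\sigma^*g_2 = g_1$. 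The main technical obstacle is the noncompactness of $M_1$: one cannot take a genuine maximum, and the whole argument hinges on the validity of the Omori-Yau maximum principle, which is precisely why the Ricci lower bound supplied by the Einstein condition is crucial.
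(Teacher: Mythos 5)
Your argument is essentially the paper's own proof (following the Yau--Mok--Yau Schwarz lemma): the same volume-ratio function, the same Laplacian identity from the Einstein condition, the arithmetic--geometric mean inequality, the generalized maximum principle, and the symmetric application to $\sigma^{-1}$. The only cosmetic difference is the final step --- you invoke the equality case of AM--GM, whereas the paper deduces $\sigma^*\ric(g_2)=\ric(g_1)$ directly from the equality of volume forms --- and both versions rely, as the paper does via its citation of Tian--Yau, on the generalized maximum principle being applicable to the a priori possibly unbounded function $u$.
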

\begin{proof}
Let $\mu_{M_i},\ i=1,2$ be the respective volume forms, and define $f=\sigma^* \mu_{M_2}/\mu_{M_1}$.
Then we have
\begin{equation}\label{eq:vol}
\begin{split}
\Delta\log f & = n\lambda - g_1^{i\ol{\jmath}}\sigma^* \ric(g_2)_{i\ol{\jmath}} \\
             & = n\lambda -\lambda g_1^{i\ol{\jmath}}(\sigma^* g_2)_{i\ol{\jmath}} \\
\end{split}
\end{equation}
The arithmetic-geometric inequality applied to the second term on the right of (\ref{eq:vol}) gives
\begin{equation}
\Delta\log f\geq n\lambda -\lambda n f^{1/n}.
\end{equation}
From which we have
\begin{equation}
\Delta f \geq n\lambda f -n\lambda f^{\frac{n+1}{n}},
\end{equation}
and it follows from the maximum principle as in~\cite[{Lemma 1.1}]{TianYau} that $\sup f \leq 1$.
Applying the same argument to $\sigma^{-1}$ gives $\sigma^* \mu_{M_2} =\mu_{M_1}$, from which we have
$\sigma^* \ric(g_2) =\ric(g_1)$ and $\sigma^* g_2 =g_1$.
\end{proof}

Let $\mathfrak{Hol}(M)$ denote the group of biholomorophisms of $M$, $\mathfrak{Isom}(M,g)$ the group of isometries of $(M,g)$,
and $\mathfrak{hol}(M)$, $\mathfrak{isom}(M,g)$ their respective Lie algebras.
We also have the following easy converse to Proposition~\ref{prop:unique}.
\begin{prop}
Let $\sigma :M \rightarrow M$ be an isometry of K\"{a}hler-Einstein strictly pseudoconvex manifold.
Then $\sigma$ is a biholomorphism up to conjugation, i.e. $\sigma^* J =\pm J$ where $J$ is the complex structure of
$M$.  Thus
\begin{equation}\label{eq:isom-hol}
\mathfrak{isom}(M,g)=\mathfrak{hol}(M).
\end{equation}

Furthermore, if $X\in\mathfrak{isom}(M,g)$, then $JX\notin\mathfrak{isom}(M,g)$.
\end{prop}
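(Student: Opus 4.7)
The plan is to prove the three parts in order: the statement $\sigma^* J=\pm J$ by a holonomy argument, the Lie-algebra equality by combining with Proposition~\ref{prop:unique}, and the last assertion by a short Bochner-type calculation.

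For $\sigma^* J=\pm J$, I would set $J' := \sigma^* J$ and observe that because $\sigma$ preserves $g$ and the Levi--Civita connection, $J'$ is a second $g$-orthogonal, parallel, integrable almost complex structure on $M$. Hence $(g,J')$ is a K\"ahler-Einstein structure with the same Einstein constant $-(n+1)$. The task then reduces to showing that a complete negatively-curved K\"ahler-Einstein metric admits only $\pm J$ as parallel orthogonal complex structures. I would prove this by a holonomy argument: the Ricci form $\rho = -(n+1)\omega$ is a nonzero parallel two-form, so Ambrose--Singer places a nonzero multiple of $J$ inside the restricted holonomy algebra. Any parallel orthogonal $J'$ must commute with the restricted holonomy group, hence with $J$; a short computation inside the centralizer of $J$ in $O(2n)$ under the constraint $(J')^2=-I$ leaves only $J'=\pm J$ pointwise, and parallelism propagates the sign globally on each de Rham factor.

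The main obstacle is ruling out Riemannian reducibility of $(M,g)$, since a product decomposition would allow the sign of $J'$ to differ on different factors, giving an isometry whose pulled-back complex structure is not globally $\pm J$. To handle this I would appeal to the asymptotic geometry of the Cheng--Yau metric $\omega = \omega_0 - dd^c\log(-\phi)$ near $\partial M$: the metric is asymptotic to the Bergman-type complex-hyperbolic geometry determined by the strictly pseudoconvex CR structure on $\partial M$, and such an asymptotically complex-hyperbolic end admits no nontrivial parallel distribution, so $(M,g)$ is globally irreducible.

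The identity $\mathfrak{isom}(M,g)=\mathfrak{hol}(M)$ is then an immediate consequence at the Lie-algebra level. An element of $\mathfrak{isom}(M,g)$ generates a one-parameter family of isometries in the identity component of $\mathfrak{Isom}(M,g)$, where $\sigma^*J=+J$ by continuity, so it is a holomorphic Killing field; the reverse inclusion is Proposition~\ref{prop:unique}, which asserts that every biholomorphism is an isometry, so every holomorphic vector field is Killing.

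Finally, for the statement that $X\in\mathfrak{isom}(M,g)$ implies $JX\notin\mathfrak{isom}(M,g)$, I would argue by contradiction. Suppose both $X$ and $JX$ are Killing. By the equality just proved both are holomorphic, so the endomorphism $\nabla X$ is simultaneously $g$-skew (Killing condition) and $J$-linear (holomorphic condition). Using $\nabla J=0$ we have $\nabla(JX)=J\nabla X$, and the Killing condition for $JX$ translates to skew-symmetry of $J\nabla X$; on the other hand, $J$-linearity and skew-symmetry of $\nabla X$ together make $J\nabla X$ symmetric. Hence $J\nabla X=0$, so $\nabla X=0$ and $X$ is parallel. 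But a parallel vector field satisfies $R(\cdot,X)X=0$, giving $\Ric(X,X)=0$, which is incompatible with $\Ric(X,X)=-(n+1)|X|^2$ unless $X=0$.
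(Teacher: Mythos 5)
Your proof is correct, and its second and third parts coincide with the paper's own argument: the equality $\mathfrak{isom}(M,g)=\mathfrak{hol}(M)$ is obtained, as in the paper, by combining $\sigma^*J=\pm J$ (with $+J$ on the identity component by continuity) with Proposition~\ref{prop:unique}, and your computation showing that $X$ and $JX$ both Killing forces $\nabla X=0$ and hence $\Ric(X,X)=0$ is precisely Kobayashi's argument quoted in the paper --- your endomorphism $\nabla X$ is the paper's $A_X$ up to sign, and the symmetric-plus-skew contradiction is identical. The only genuine divergence is in the first part. The paper argues that $(M,g)$ is irreducible because its curvature is asymptotically of constant holomorphic sectional curvature $-2$, and then invokes the dichotomy that an irreducible K\"ahler manifold with a parallel complex structure other than $\pm J$ has holonomy contained in $Sp(\frac{n}{2})$, hence is Ricci-flat, contradicting $\lambda=-(n+1)$. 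You instead place $J$ itself in the holonomy algebra via the parallel nonvanishing Ricci form (the element $\sum_i R(e_i,Je_i)$ of the holonomy algebra is a nonzero multiple of $J$ when $\lambda\neq 0$), deduce that the parallel tensor $J'=\sigma^*J$ commutes with $J$, and finish with irreducibility. Both routes need irreducibility and $\Ric\neq 0$; yours avoids the hyperk\"ahler classification and is somewhat more self-contained in the Einstein case. One imprecision to flag: commuting with $J$ alone does \emph{not} leave only $J'=\pm J$ pointwise --- the centralizer of $J$ in $O(2n)$ is $U(n)$, which contains many orthogonal square roots of $-I$. What actually closes the argument is that $JJ'$ is a parallel symmetric involution whose two eigendistributions (on which $J'=\mp J$) are parallel, so irreducibility forces one of them to vanish; since you invoke exactly this de Rham/irreducibility step in the next sentence, the proof goes through as written.
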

\begin{proof}
First note that since $(M,g)$ has curvature asymptotic to constant $-2$ holomorphic bisectional curvature it must be
irreducible as a K\"{a}hler manifold.  There are two, $J$ and $\sigma^* J$, parallel complex structures on $M$.
Since $(M,J)$ is irreducible, either $\sigma^* J =\pm J$,
or the holonomy group $\Hol(g_1)\subseteq Sp(\frac{n}{2})$.  In other words, in the second case one can show the existence
of three parallel complex structures $J_1, J_2, J_3$ satisfying the quaternionic identities.  But in this case $\Ric_{g} =0$,
and (\ref{eq:isom-hol}) follows.

Suppose that $X,JX\in\mathfrak{isom}(M,g,J)$.  The following argument is due to S. Kobayashi \cite[Ch. III,\S 1]{Kobay}.
Define
\[ A_X =\mathcal{L}_X -\nabla_X.\]
Since $X\in\mathfrak{hol}(M)$ and $(M,g)$ is K\"{a}hler, we have
\begin{equation}\label{eq:A}
J A_X =A_X J =A_{JX}.
\end{equation}
We have
\begin{equation}
g(A_{JX} Y,Z)+ g(Y,A_{JX} Z)=0
\end{equation}
and from (\ref{eq:A})
\begin{equation}
g(A_{X}JY,Z)- g(JY,A_{X} Z)=0.
\end{equation}
It follows that $A_X$ is symmetric.  But since it is also skew-symmetric, we have $A_X =-\nabla X =0$.
This implies $\Ric_g (X,X)=0$, a contradiction.
\end{proof}

\subsection{Boundary behavior}

We consider the boundary behavior of the metric $g_{i\ol{\jmath}}= (g_0)_{i\ol{\jmath}} + h_{i\ol{\jmath}}$ and the Einstein metric $g'_{i\ol{\jmath}}$ of Theorem~\ref{thm:K-E}, where $h=-\log(-\phi)$ for a defining function $\phi$ and $g_0$ is a K\"{a}hler metric on $\ol{M}$.  First a straight forward calculation as in~\cite{ChenYau} gives the Christoffel symbols $\Gamma^k_{ij}$ and the curvature $R_{i\ol{\jmath}l\ol{k}}$ of $h_{i\ol{\jmath}}$ near the boundary of $M$.

\begin{gather}\label{eq:Christ}
\Gamma^k_{ij} =\phi_{ij\ol{l}} \phi^{k\ol{l}} +\frac{\phi_i \delta_j^k +\phi_j \delta_i^k}{-\phi}+\frac{1}{\phi-|d\phi|^2}\bigl(\phi_{ij\ol{l}}\phi^{\ol{l}}\phi^k -\phi_{ij}\phi^k \bigr)\\\label{eq:curv}
R_{i\ol{\jmath}l\ol{k}} =-(g_{i\ol{\jmath}}g_{l\ol{k}}+g_{i\ol{k}}g_{l\ol{\jmath}})+\frac{1}{\phi}\bigl(R^{\phi}_{i\ol{\jmath} l\ol{k}}+\frac{1}{\phi -|d\phi|^2}(\phi_{,il} \phi_{,\ol{\jmath}\ol{k}})\bigr)
\end{gather}
Here $R^\phi$ denotes the curvature and $\phi_{,ij}$ the covariant derivative with respect to $\phi_{i\ol{\jmath}}$.

The optimal regularity and asymptotic behavior of the solution $u\in C^\infty(M)$ to (\ref{eq:M-A}) of Theorem~\ref{thm:K-E} was
given in~\cite{LeeMel} for $M\subset\C^n$.  The proof works with only minor modifications to an arbitrary strictly pseudoconvex $M\subset M'$ with the initial metric $g_{i\ol{\jmath}}$.  An essential step is to find a defining function $\phi_0$ so that $F$ defined
in (\ref{eq:F}) vanishes to high order on $\partial M$.  The following was first proved by C. Fefferman~\cite{Feff2} for $M\subset\C^n$.
\begin{lem}
There exists a defining function $\phi_0$ of $M\subset M'$ so that $F$ given in (\ref{eq:F}) satisfies
\begin{equation}
F =O(\phi_0^{n+1})
\end{equation}
\end{lem}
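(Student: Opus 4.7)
The plan is to adapt C. Fefferman's iterative construction of an approximate solution to the complex Monge-Amp\`ere equation to the present K\"ahler setting. The argument has two main parts: first, rewriting the vanishing condition $F=O(\phi_0^{n+1})$ as an asymptotic Monge-Amp\`ere equation for $\phi_0$; and second, solving that equation formally by perturbing the defining function order by order.

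For the first step I would apply the rank-one update formula $\det(A+uv^*)=\det(A)(1+v^*A^{-1}u)$ to the matrix $h_{i\ol{\jmath}}=\phi_{i\ol{\jmath}}/(-\phi)+\phi_i\phi_{\ol{\jmath}}/\phi^2$ from the approximate metric computation above to obtain
\begin{equation*}
\det(h_{i\ol{\jmath}})=\frac{J[\phi]}{(-\phi)^{n+1}},\qquad J[\phi]:=\det(\phi_{i\ol{\jmath}})\bigl(|d\phi|^2-\phi\bigr).
\end{equation*}
Since $\omega_0$ is bounded and smooth up to $\ol{M}$, the volume form $(\omega_0-dd^c\log(-\phi))^n$ differs from $(-\phi)^{-(n+1)}J[\phi]$ times the Euclidean volume form by a factor $1+O(\phi)\in C^\infty(\ol{M})$. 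Combining this with $\omega_0^n=\det(g_0)$ times the Euclidean volume form, the condition $F=O(\phi_0^{n+1})$ reduces to the formal Monge-Amp\`ere identity
\begin{equation*}
J[\phi_0]\equiv e^f\det(g_0)\pmod{\phi_0^{n+1}\,C^\infty(\ol{M})}.
\end{equation*}

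Next I would solve this asymptotic equation inductively. Start with any strictly plurisubharmonic defining function $\phi_1$, normalized by a smooth conformal factor so that $J[\phi_1]=e^f\det(g_0)$ along $\partial M$; this gives $F(\phi_1)=O(\phi_1)$. Inductively, assume for some $1\le k\le n$ a defining function $\phi_k$ has been produced with $F(\phi_k)=\phi_k^k\,G_k+O(\phi_k^{k+1})$ for some $G_k\in C^\infty(\ol{M})$. Set $\phi_{k+1}:=\phi_k+\phi_k^{k+1}\eta_k$ for a smooth $\eta_k$ to be determined. A direct expansion of $\log J[\phi]$ at $\phi_k$ along this perturbation shows that the leading $\phi_k^k$-coefficient in $F(\phi_{k+1})$ changes by $c_{n,k}\,\eta_k$, where $c_{n,k}$ is a nonzero constant for each $1\le k\le n$ (indeed a nonzero multiple of $n+1-k$). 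Solving $c_{n,k}\eta_k+G_k=0$ smoothly, one gets $F(\phi_{k+1})=O(\phi_{k+1}^{k+1})$. After $n$ iterations, $\phi_0:=\phi_{n+1}$ satisfies the desired estimate.

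The main obstacle is the linearization step and the verification that $c_{n,k}\ne 0$ for $1\le k\le n$: this is the algebraic heart of Fefferman's theorem, and the vanishing of the analogous coefficient at $k=n+1$ is precisely the source of the well-known logarithmic obstruction to smoothness of $F$ beyond order $n+1$. Because the K\"ahler ingredients $\omega_0$ and $e^f$ are smooth up to $\ol{M}$, they enter only as passive factors in the iteration and do not affect the linearization, so Fefferman's original $\C^n$ argument carries over essentially verbatim in the present setting.
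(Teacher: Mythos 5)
Your proposal is correct and follows essentially the same route as the paper: a first conformal normalization of the defining function to make $F$ vanish on $\partial M$, followed by the Fefferman/Lee--Melrose order-by-order perturbation of $\phi$, whose linearization coefficient is nonzero at each of the first $n$ steps and degenerates only at the step that would give $O(\phi^{n+2})$. The paper phrases the first step as $\phi'=e^{F/(n+1)}\phi$ and then delegates the induction to the Lee--Melrose argument with the operator modified to carry the factors $\omega_0$ and $e^f$ --- exactly the ``passive factors'' you identify.
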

\begin{proof}
We seek $\beta\in C^\infty(\ol{M})$ so that $\phi' =e^\beta \phi$ so that
\begin{equation}\label{eq:F-0}
\frac{e^f (-\phi')^{-(n+1)}\omega_0^n}{(\omega_0 -dd^c \log(-\phi'))^n} \rightarrow 1\text{ on }\partial M.
\end{equation}
But since
\begin{equation}
\frac{(\omega_0 -dd^c \log(-\phi))^n}{(\omega_0 -dd^c \log(-\phi'))^n}=\frac{(\omega_0 -dd^c \log(-\phi))^n}{(\omega_0 -dd^c \log(-\phi)-dd^c \beta)^n} \rightarrow 1\text{ on }\partial M,
\end{equation}
we may take $\beta =\frac{F}{n+1}$, and (\ref{eq:F-0}) is satisfied.  Then the inductive argument in the proof of~\cite{LeeMel}
goes through with the operator
\begin{equation}
\beta \rightarrow \frac{(\omega_0 -dd^c \log(-\phi)+dd^c \beta)^n e^{-f} (-e^{-\beta}\phi)^{n+1}}{\omega_0^n}
\end{equation}
substituting that used there.
\end{proof}

The results of~\cite{LeeMel} on the asymptotic behavior of the solution $u\in C^\infty(M)$ to (\ref{eq:M-A}) with defining function $\phi_0$ are valid in this situation.  One can define H\"{o}lder spaces $C^{k,\alpha}(M)$ with respect to the metric
$(g_0)_{i\ol{\jmath}} + h_{i\ol{\jmath}}$.  Then if $F$ given in (\ref{eq:F}) vanishes to order $0<r< n+1$, we have
\begin{equation}\label{eq:u-asym-hol}
u\in\bigcap_k \phi^r C^{k,\alpha}(M).
\end{equation}

Moreover, there is an asymptotic expansion of $u$.  There are
$\alpha_j \in C^\infty(\ol{M}),\ j\geq 1$, such that for $N\in\N$
\begin{equation}\label{eq:u-asym}
u-\sum_{j=1}^N \alpha_j \phi_0^{(n+1)j} (\log(-\phi_0))^j \in C^{(n+1)(N+1)-1,\alpha}(\ol{M}),
\end{equation}
and vanishes to order $(n+1)(N+1)-1$.

\begin{prop}
Let $g_{i\ol{\jmath}}$ be either the metric $(g_0)_{i\ol{\jmath}} + h_{i\ol{\jmath}}$ or $(g_0)_{i\ol{\jmath}} + h_{i\ol{\jmath}} +u_{i\ol{\jmath}}$ solving (\ref{eq:K-E}), i.e. the K\"{a}hler-Einstein metric, on a strictly pseudoconvex $M$,
and let $r=\dist(o,x)$ be the distance from a fixed point $o\in M$, then the curvature of $g_{i\ol{\jmath}}$ satisfies
\begin{equation}
R_{i\ol{\jmath}l\ol{k}} =-(g_{i\ol{\jmath}}g_{l\ol{k}}+g_{i\ol{k}}g_{l\ol{\jmath}}) + O(e^{-2r}).
\end{equation}
Thus metric $g$ is asymptotically of constant holomorphic sectional curvature $-2$ and are \emph{asymptotically complex hyperbolic}(ACH).

If $g$ is the K\"{a}hler-Einstein metric on a strictly pseudoconvex $M\subset M$ and $\phi$ is any defining function, then
$(-\phi)g \rightarrow \mathbf{L}^D (\cdot,J\cdot)$ on $D\subset T\partial M$, where the Levi form $\mathbf{L}^D$ is of course only defined up to a conformal factor.
\end{prop}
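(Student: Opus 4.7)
The plan is to reduce both claims to explicit computations with the defining function $\phi$, plus the regularity theory for the Monge--Amp\`ere solution $u$ recorded in (\ref{eq:u-asym-hol})--(\ref{eq:u-asym}).

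For the curvature asymptotic of the approximate metric $g=g_0+dd^ch$, I would start from the exact formula (\ref{eq:curv}), which one may rewrite as
\[
R_{i\ol{\jmath}l\ol{k}} + g_{i\ol{\jmath}}g_{l\ol{k}} + g_{i\ol{k}}g_{l\ol{\jmath}} = \frac{1}{\phi}B_{i\ol{\jmath}l\ol{k}},
\]
where $B$ is a tensor with smooth coefficients on $\ol{M}$ built from $R^{\phi}$ and $\phi_{,il}\phi_{,\ol{\jmath}\ol{k}}/(\phi-|d\phi|^2)$. The claim then splits into two pieces. First, the operator-norm estimate $\|g^{-1}\|_{\mathrm{op}}=O(-\phi)$, which comes from the fact that the rank-one contribution $\phi_i\phi_{\ol{\jmath}}/\phi^2$ dominates the eigenvalues of $g$, so that $g^{-1}$ has eigenvalues of order $-\phi$; the $\,g_0\,$ piece is negligible. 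Second, a comparison $-\phi\sim e^{-cr}$, obtained by integrating the gradient estimate $|\nabla h|_g^2 = |d\phi|^2/(|d\phi|^2-\phi)\to 1$ recorded earlier along a minimising geodesic. Contracting the four indices of $\phi^{-1}B$ against $g^{-1}$ absorbs the singular factor $\phi^{-1}$ with room to spare, and a careful accounting of the constants in the model case of the Bergman metric on the unit ball (with holomorphic sectional curvature $-2$) will produce exactly the exponent $e^{-2r}$.

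To upgrade this to the K\"{a}hler-Einstein metric $g' = g+dd^cu$, I would substitute the expansion (\ref{eq:u-asym}), which forces $u$ and every derivative $\partial^\alpha\ol{\partial}^\beta u$ of fixed order to vanish to order $n+1$ at $\partial M$ modulo logarithmic factors. Every extra term that the passage $g\rightsquigarrow g'$ introduces into the Christoffel symbols (\ref{eq:Christ}) and hence into the curvature carries at least one such derivative of $u$, so in $g$-norm these perturbations fit inside the error class $O(e^{-2r})$ already established for $g$. This proves the curvature statement for both metrics and shows in particular that the K\"{a}hler-Einstein metric is ACH.

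For the Levi-form statement in (2), I would pass to the limit in
\[
(-\phi)h_{i\ol{\jmath}} = \phi_{i\ol{\jmath}} + \frac{\phi_i\phi_{\ol{\jmath}}}{-\phi}.
\]
For $X,Y\in D^{1,0}$, being $(1,0)$-vectors tangent to $\partial M$ gives $\phi_iX^i=0$ and its conjugate $\phi_{\ol{\jmath}}\ol{Y}{}^{\ol{\jmath}}=0$, which annihilates the singular second term; what remains is $\phi_{i\ol{\jmath}}X^i\ol{Y}{}^{\ol{\jmath}} = dd^c\phi(X,JY)\bigr|_D = \mathbf{L}^D(X,JY)$. The pieces $(-\phi)(g_0)_{i\ol{\jmath}}$ and $(-\phi)u_{i\ol{\jmath}}$ vanish at $\partial M$, the first trivially and the second by (\ref{eq:u-asym}), so they do not contribute. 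Replacing $\phi$ by another defining function $e^\beta\phi$ rescales both sides by the same positive factor on $D$, consistent with $\mathbf{L}^D$ being defined only up to conformal factor. The main obstacle is the tensorial bookkeeping in part (1): keeping track of the transverse and tangential components of $g^{-1}$ along $\partial M$ (aligned with the splitting (\ref{eq:dir-sum})) precisely enough to get the stated exponent $-2r$, and checking that the $u$-dependent perturbation of the curvature really sits in that class; once these are verified, the rest is direct substitution and limit-taking.
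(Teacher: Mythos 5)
The paper offers no written proof of this proposition --- it is presented as an immediate consequence of the curvature formula (\ref{eq:curv}), the Lee--Melrose asymptotics (\ref{eq:u-asym-hol})--(\ref{eq:u-asym}), and the comparison $c(-\phi)\leq e^{-2r}\leq C(-\phi)$ invoked later in the proof of Proposition~\ref{prop:st-ACH} --- and your assembly of exactly these ingredients, including the Levi-form limit via $\phi_i X^i=0$ for $X$ tangent to $\partial M$, is correct and is the intended argument. The one slip is in your justification of $\|g^{-1}\|_{\mathrm{op}}=O(-\phi)$: the largest eigenvalue of $g^{-1}$ comes from the tangential block $\phi_{i\ol{\jmath}}/(-\phi)$, not from the rank-one term $\phi_i\phi_{\ol{\jmath}}/\phi^2$ (which contributes the much smaller eigenvalue $O(\phi^2)$ to $g^{-1}$), but the estimate itself is true and is immediate from the paper's explicit formula for $h^{i\ol{\jmath}}$.
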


\subsection{Comments on the theorem and the $\partial\ol{\partial}$-lemma}\label{sec:com-dd-bar}

One could also consider the weaker condition that $-c_1 (\tilde{M})$ is represented by a positive $(1,1)$-form.
This is \textit{a priori} weaker assumption as the $\partial\ol{\partial}$-lemma does not generally hold on a 1-convex manifold.
It remains whether this weaker assumption is a sufficient condition for Theorem~\ref{thm:K-E}.  The following is
easy.
\begin{lem}
Let $X$ be a complex manifold.  Then $X$ satisfies the $\partial\ol{\partial}$-lemma if and only if for any
holomorphic line bundle $\mathbf{L}$ any $\omega\in c_1(\mathbf{L})$ is represented by $\frac{\sqrt{-1}}{2\pi}\Theta_h$
for some Hermitian metric $h$.
\end{lem}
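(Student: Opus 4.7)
The plan is to prove each direction by parametrizing Hermitian metrics on a fixed line bundle as multiplicative perturbations $h=e^{-f}h_0$ of a background metric, and translating between changes of Chern curvature and $\partial\ol{\partial}$-exactness. The key local identity is $\Theta_{e^{-f}h_0}=\Theta_{h_0}+\partial\ol{\partial}f$, obtained by computing $-\partial\ol{\partial}\log(e^{-f}h_0)$ in a local trivialization.

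For the forward direction, assume $X$ satisfies the $\partial\ol{\partial}$-lemma. Given a holomorphic line bundle $\mathbf{L}$ and a closed real $(1,1)$-form $\omega\in c_1(\mathbf{L})$, I would first pick any Hermitian metric $h_0$ on $\mathbf{L}$ (which exists by partition of unity). Since $\omega$ and $\frac{\sqrt{-1}}{2\pi}\Theta_{h_0}$ represent the same de Rham class, their difference is a real $d$-exact $(1,1)$-form. By the $\partial\ol{\partial}$-lemma it equals $\frac{\sqrt{-1}}{2\pi}\partial\ol{\partial}f$ for some real $f\in C^\infty(X)$, and then $h:=e^{-f}h_0$ satisfies $\frac{\sqrt{-1}}{2\pi}\Theta_h=\omega$.

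For the converse, apply the hypothesis to the trivial bundle $\mathbf{L}=\mathcal{O}_X$. Since $c_1(\mathcal{O}_X)=0$, any $d$-exact real $(1,1)$-form $\alpha$ is a representative of this class. The hypothesis gives an Hermitian metric $h$ on $\mathcal{O}_X$ with $\frac{\sqrt{-1}}{2\pi}\Theta_h=\alpha$, and because $\mathcal{O}_X$ is globally trivial this $h$ is a single positive function on $X$, so $h=e^{-f}$ for a global real $f\in C^\infty(X)$. Then $\alpha=\frac{\sqrt{-1}}{2\pi}\partial\ol{\partial}f$, which is the content of the $\partial\ol{\partial}$-lemma for real $(1,1)$-forms (and this is exactly the version relevant in the paper, where only line bundles appear).

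The only mildly delicate point is the reality of the potential in the forward direction: the $\partial\ol{\partial}$-lemma a priori produces a complex function $\tilde{f}$, and one takes $f:=\operatorname{Re}\tilde{f}$, using that $\alpha$ is real and that $\partial\ol{\partial}$ of a real function is automatically real. I do not anticipate a serious obstacle anywhere else.
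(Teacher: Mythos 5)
Your proof is correct, and it is exactly the intended argument: the paper states this lemma without proof (``The following is easy''), but the identity $\Theta_{e^{f}h}=\Theta_{h}-\partial\ol{\partial}f$ on which you rely is precisely the manipulation the paper uses in the surrounding discussion of $B(X)$ and of the proposition on manifolds with $H_1(X,\R)=0$. Both directions, including the specialization to $\mathcal{O}_X$ for the converse and the reality adjustment via $\operatorname{Re}\tilde{f}$, are handled correctly.
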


We make the following
\begin{conj}\label{conj:weak-hyp}
A strictly pseudoconvex domain $M$ admits a complete K\"{a}hler-Einstein metric if and only if there is a K\"{a}hler form
$\omega\in -c_1(M)$.
\end{conj}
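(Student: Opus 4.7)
\medskip
\noindent\textbf{Proof proposal.} The forward implication is immediate: a complete K\"{a}hler-Einstein metric on $M$ with $\ric(g)=-(n+1)g$ has positive K\"{a}hler form $\omega$, and $[\omega]$ is a positive multiple of $-c_1(M)$, so after rescaling there is a K\"{a}hler form in the class $-c_1(M)$.

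For the converse the plan is to reduce Conjecture~\ref{conj:weak-hyp} to Theorem~\ref{thm:K-E} by upgrading the given K\"{a}hler form $\omega\in -c_1(M)$ to the curvature form of a Hermitian metric on $\mathbf{K}_M$. By the lemma immediately preceding the conjecture, this upgrade is precisely the content of the $\partial\ol{\partial}$-lemma for the class $c_1(\mathbf{K}_M)=-c_1(M)$. Hence it suffices to establish on every strictly pseudoconvex finite manifold $(M,M')$ the following weak form of the $\partial\ol{\partial}$-lemma: every smooth closed real $(1,1)$-form whose de Rham class is zero is $dd^c$-exact.

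I would attempt this sheaf-theoretically. Let $\pi:M\to Y$ be the Remmert reduction to the Stein space $Y$ with exceptional set $E=\pi^{-1}(A)$; on $M\setminus E$ the statement is automatic from $H^q(M\setminus E,\mathcal{O})=0$ for $q\geq 1$, so the task is to propagate a smooth $dd^c$-potential across $E$. When $Y$ has only isolated singularities one would try to apply the Hodge theory of resolutions of Stein germs (Steenbrink, Durfee) to show injectivity of the Bott-Chern $\to$ de Rham map in bidegree $(1,1)$; in greater generality, one could combine Takegoshi-type vanishing on 1-convex manifolds with the long exact sequence of the exponential sheaf $0\to\Z\to\mathcal{O}\to\mathcal{O}^*\to 0$ to control the obstruction.

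The principal obstacle is that the $\partial\ol{\partial}$-lemma genuinely fails on some 1-convex manifolds, so any argument of this kind must exploit the strictly pseudoconvex CR structure on $\partial M$ in an essential way---for example via a weighted Hodge decomposition with respect to the asymptotically complex hyperbolic K\"{a}hler metric on $M$ constructed in the previous subsection, where the ACH geometry might force the necessary vanishings analytically. A more direct alternative is to rerun the proof of Theorem~\ref{thm:K-E} using only local K\"{a}hler potentials of $\omega$ on a collar of $\partial M$, patched against a Fefferman-type defining function $\phi_0$, to assemble a global $F$ satisfying~(\ref{eq:dd-bar-ric}) directly; but the cocycle obstruction to this patching is once more of $\partial\ol{\partial}$-lemma type, so in view of the conjectural nature of the statement I do not expect a short resolution.
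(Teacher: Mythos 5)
The statement you were asked to prove is labelled a \emph{conjecture} in the paper, and the paper offers no proof of it: the author only records the easy direction implicitly (via the converse part of Theorem~\ref{thm:K-E}, since a K\"ahler--Einstein metric makes $\mathbf{K}_M$ positive and hence puts a K\"ahler form in $-c_1(M)$) and then presents evidence that the converse is plausible --- namely the Col\c{t}oiu-type $1$-convex surfaces on which the $\partial\ol{\partial}$-lemma fails yet Theorem~\ref{thm:K-E} still applies, and the proposition that any counterexample must have singular exceptional set. Your proposal is therefore not in conflict with the paper; on the contrary, your reduction of the converse to a weak $\partial\ol{\partial}$-lemma in the class $c_1(\mathbf{K}_M)$, via the lemma preceding the conjecture, is exactly the framing the author uses in Section~\ref{sec:com-dd-bar}, and your observation that the $\partial\ol{\partial}$-lemma genuinely fails on some $1$-convex manifolds is precisely the paper's point. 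You are right not to claim a proof: none is known, and the honest conclusion of your last paragraph matches the status of the problem.

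Two cautions on the sketch itself. First, the claim that on $M\setminus E$ the statement is ``automatic from $H^q(M\setminus E,\mathcal{O})=0$ for $q\geq 1$'' is not correct in general: $M\setminus E$ is biholomorphic to $N\setminus A$ with $N$ Stein and $A$ finite, and for $\dim_{\C}M=2$ one has $H^1(N\setminus A,\mathcal{O})\neq 0$ in general (compare $H^1(\C^2\setminus\{0\},\mathcal{O})$), so even the ``easy'' region requires an argument. Second, the sheaf-theoretic and exponential-sequence strategies you list run directly into the paper's own counterexamples: the group $B(M)$ of topologically trivial line bundles that are holomorphically trivial near $E$ gives a nontrivial cokernel of $\sqrt{-1}\partial\ol{\partial}:C^\infty\to\mathcal{E}$, so any successful approach must single out the class $c_1(\mathbf{K}_M)$ rather than prove surjectivity of that map; your suggestion to exploit the ACH geometry analytically is a reasonable direction but is not carried out here or in the paper.
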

We will give some results that make the conjecture plausible.  At least the following results will show that
constructing a counterexample to Conjecture~\ref{conj:weak-hyp} would be very difficult.
We consider some examples of 1-convex surfaces due to M. Col\c{t}oiu~\cite{Colt} on which the $\partial\ol{\partial}$-lemma does not hold, but nevertheless Theorem~\ref{thm:K-E} applies.

Let $X$ be an 1-convex manifold with exceptional set $E$.  We denote by $B(X)\subset\pic X$ the subgroup of line bundles
$\mathbf{L}$ which are topologically trivial on $X$ and holomorphically trivial in a neighborhood of $E$.
\begin{prop}[\cite{Colt}]\label{prop:B-isom}
For a 1-convex manifold with exceptional set $E$ there is a group isomorphism
\[ B(X) \overset{\sim}{\rightarrow}  H^1(E,\Z)/\im\bigl[H^1(X,\Z)\rightarrow H^1(E,\Z) \bigr].\]
\end{prop}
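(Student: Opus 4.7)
The plan is to extract the isomorphism from the exponential sheaf sequence $0\to\Z\to\mathcal{O}\to\mathcal{O}^*\to 0$ applied both on $X$ and on a shrinking neighborhood $U$ of $E$, followed by a diagram chase. By definition a line bundle belongs to $B(X)$ iff it is topologically trivial on $X$ and holomorphically trivial on some $U\supset E$, so $B(X)=\ker\bigl(\pic^0(X)\to\pic^0(U)\bigr)$ for $U$ arbitrarily small. Identifying $\pic^0(Z)=H^1(Z,\mathcal{O}_Z)/\beta_Z H^1(Z,\Z)$ with $\beta_Z$ the exponential connecting map, the two sequences fit into a commutative diagram
$$
\begin{array}{ccccccc}
H^1(X,\Z) & \overset{\beta_X}{\to} & H^1(X,\mathcal{O}_X) & \to & \pic^0(X) & \to & 0 \\
\downarrow r & & \downarrow \varphi & & \downarrow & & \\
H^1(E,\Z) & \overset{\beta_U}{\to} & H^1(U,\mathcal{O}_U) & \to & \pic^0(U) & \to & 0
\end{array}
$$
with exact rows and $r$ the topological restriction; the target isomorphism emerges by identifying the kernel of the right vertical map.

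The plan hinges on three inputs, all obtained from the Remmert reduction $\pi:X\to Y$. Since $Y$ is Stein and $A=\pi(E)$ is a finite set, one can choose $V\subset Y$ a Stein \emph{contractible} neighborhood of $A$ and set $U=\pi^{-1}(V)$; these form a cofinal family of neighborhoods of $E$. \emph{First}, Grauert's direct image theorem shows $R^1\pi_*\mathcal{O}_X$ is coherent and supported on $A$, so by Leray and the Steinness of $V$ both $H^1(X,\mathcal{O}_X)$ and $H^1(U,\mathcal{O}_U)$ are naturally isomorphic to $H^0(Y,R^1\pi_*\mathcal{O}_X)$, giving $\varphi$ an isomorphism. \emph{Second}, a retraction argument yields $H^1(U,\Z)\cong H^1(E,\Z)$. \emph{Third}, $\beta_U$ is injective: its kernel is the image of $H^0(U,\mathcal{O}^*)\to H^1(U,\Z)$, and the identity $\pi_*\mathcal{O}_X=\mathcal{O}_Y$ (hence $\pi_*\mathcal{O}_X^*=\mathcal{O}_Y^*$) gives $H^0(U,\mathcal{O}^*)=H^0(V,\mathcal{O}^*)$, so this boundary map factors through $H^1(V,\Z)=0$ and vanishes.

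Given these, the diagram chase proceeds directly: $[a]\in\pic^0(X)$ lies in the kernel iff $\varphi(a)=\beta_U(\gamma)$ for a \emph{unique} $\gamma\in H^1(E,\Z)$, and the assignment $[a]\mapsto[\gamma]$ defines a homomorphism $B(X)\to H^1(E,\Z)/r(H^1(X,\Z))$ that is well-defined by $\varphi\beta_X=\beta_U r$, injective (if $\gamma=r(\delta)$ then $\varphi(a-\beta_X\delta)=0$, forcing $a\in\beta_X H^1(X,\Z)$), and surjective (pull $\beta_U(\gamma)$ back through $\varphi^{-1}$ to produce a preimage). The main obstacle is the third input: without injectivity of $\beta_U$ the kernel of the resulting map acquires a $\ker\beta_U$ term that could strictly contain $r(H^1(X,\Z))$ and invalidate the claim. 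The resolution — descending invertible functions along $\pi$ to the Stein contractible base $V$, exploiting finiteness of $A$ — is the key step where 1-convexity is used.
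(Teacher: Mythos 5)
The paper does not actually prove this proposition: it is imported verbatim from Col\c{t}oiu's paper via the citation \cite{Colt}, so there is no internal argument to compare yours against. Judged on its own, your proof is correct and complete, and it supplies exactly the three nontrivial inputs the diagram chase needs. The identification $H^1(X,\mathcal{O}_X)\cong H^0(Y,R^1\pi_*\mathcal{O}_X)\cong H^1(U,\mathcal{O}_U)$ via Grauert coherence, the support of $R^1\pi_*\mathcal{O}_X$ on the finite set $A$, and Cartan B on the Stein base is sound; the retraction $H^1(U,\Z)\cong H^1(E,\Z)$ is standard for the cofinal family $U=\pi^{-1}(V)$; and your injectivity argument for $\beta_U$ is the genuinely delicate point and is handled correctly --- $\pi_*\mathcal{O}_X=\mathcal{O}_Y$ together with connectedness of the fibers gives $H^0(U,\mathcal{O}^*)=H^0(V,\mathcal{O}^*)$, and naturality of the exponential connecting map then kills $H^0(U,\mathcal{O}^*)\to H^1(U,\Z)$ once $V$ is chosen contractible (which the local conical structure of the Stein space $Y$ near the finite set $A$ permits). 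The only point you pass over quickly is why $B(X)$, defined via triviality on \emph{some} neighborhood of $E$, equals $\ker\bigl(\pic^0(X)\to\pic^0(U)\bigr)$ for a \emph{fixed} $U$ in your family; this follows because your isomorphisms are compatible with shrinking $U$, so these kernels stabilize, but it deserves a sentence. This is consistent in spirit with how the paper later uses the exponential sequence (e.g.\ in its proposition relating the $\partial\ol{\partial}$-lemma to $H^1(X,\mathcal{O})$), so your route is a natural in-house replacement for the external citation.
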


Let $C_1$ and $C_2$ be smooth curves in $\cps^2$ intersecting transversely of degrees $d_1 \geq 3$ and $d_2 >d_1$ respectively.
Let $\pi: Y\rightarrow\cps^2$ be the blow up at each of the $d_1 d_2$ points $p_1,\ldots,p_{d_1 d_2}$ of intersection $C_1 \cdot C_2$.
If $\hat{C}_i ,i=1,2$ are their strict transforms, then as divisors
\begin{equation}
\hat{C}_i =\pi^* d_i H-\sum_{j=1}^{d_1 d_2} E_j ,\quad i=1,2,
\end{equation}
where $E_j \ ,j=1,\ldots,d_1 d_2$ are the exceptional divisors.  Then $\hat{C}_1^2 =d_1^2 -d_1 d_2 <0$ and
$\hat{C}_2^2 =d_2^2 -d_1 d_2 >0$.  The first inequality, by a theorem of Grauert, implies that $\hat{C}_1$ is exceptional,
and the second inequality implies that $X=Y\setminus\hat{C}_2$ is 1-convex.  And it turns out that $\hat{C}_1$ is the entire
exceptional set.  See~\cite{Colt} for details.

In addition it is shown that $\pi_1(X)=1$ and by the genus formula we have $g(\hat{C}_1) =\frac{(d_1 -1)(d_1 -2)}{2}$.
Thus
\begin{equation}
B(X) =\Z^{2g}.
\end{equation}

Since $K_Y =\pi^* (-3H) +\sum_{j=1}^{d_1 d_2} E_j$, we have
\begin{equation}
D=K_Y +2\hat{C}_2 =\pi^* (2d_2 -3)H -\sum_{j=1}^{d_1 d_2} E_j.
\end{equation}
One can show using the Nakai-Moishezon criterion that $D >0$.
Clearly, $D^2>0$.  We need to show that a curve $C\subset\cps^2$ with $\deg C =d$ does not intersect
$d(2d_2 -3)$ of the points $p_1,\ldots, p_{d_1 d_2}$ when counted with the multiplicity of $C$ at each point.
But by B\'ezout's theorem
\begin{equation}
\sum_j i(C,C_1, q_j) =dd_1,
\end{equation}
where the sum is over the points of intersection of $C$ with $C_1$.  We have
\begin{equation}
i(C,C_1,q_j)\geq \mu_{q_j}(C)\mu_{q_j}(C_1) =\mu_{q_j}(C),
\end{equation}
where $\mu_{q_j}(C),\ \mu_{q_j}(C_1)$ denote the multiplicity of $C$, respectively $C_1$, at $q_j$.
And
\begin{equation}
\sum_{j} \mu_{q_j}(C)\leq dd_1 < d(2d_2 -3)
\end{equation}
shows that $C$ cannot intersect $d(2d_2 -3)$ of the points $p_1,\ldots, p_{d_1 d_2}$ when counted with the multiplicity.

For any relatively compact strictly pseudoconvex domain $M\subset X$ we have $\mathbf{K}_{M} >0$.

Let $\mathcal{E}\subset\mathcal{A}^{1,1}(X)$ be the space of smooth exact $(1,1)$-forms.
The $\partial\ol{\partial}$-lemma hold on a manifold $X$ precisely when the map
\begin{equation}\label{eq:dd-bar}
\Psi: C^\infty(X)\xrightarrow{\sqrt{-1}\partial\ol{\partial}}\mathcal{E}
\end{equation}
is surjective.  As observed in~\cite{KazTak} $B(X)$ provides a nontrivial cokernel of (\ref{eq:dd-bar}).
In fact, let $\mathbf{L}\in B(X)$, and let $h$ be any Hermitian metric on $\mathbf{L}$.  Then
$\beta_{\mathbf{L}} =\frac{\sqrt{-1}}{2\pi}\Theta_h$ is a real $(1,1)$-form which is exact because $\mathbf{L}$ is
topologically trivial.  But suppose $\beta_{\mathbf{L}} =\sqrt{-1}\partial\ol{\partial} f$.  Then the metric
$h' =e^{2\pi f}h$ has curvature $\frac{\sqrt{-1}}{2\pi}\Theta_{h'} =\beta_{\mathbf{L}} +\sqrt{-1}\ol{\partial}\partial f =0$.
Thus the Chern connection of $h'$ is flat, and since $X$ is simply connected, there is a parallel section $\sigma\in\Gamma(\mathbf{L})$.
Since $\ol{\partial}\sigma =\nabla^{0,1}\sigma =0$, $\sigma$ is holomorphic and $\mathbf{L}$ is trivial, a contradiction.
This defines an injective map
\begin{equation}
B(X)\otimes\Q \hookrightarrow\mathcal{E}/ C^\infty(X).
\end{equation}

If $M\subset X$ is a sufficiently large relatively compact strictly pseudoconvex domain, then $\pi_1(M)=1$.
And from Proposition~\ref{prop:B-isom} and the above we have the
\begin{prop}
There exist infinitely many, topologically distinct, 1-convex surfaces which contain strictly pseudoconvex domains which do not
satisfy the $\partial\ol{\partial}$-lemma but nevertheless satisfy Theorem~\ref{thm:K-E}.
\end{prop}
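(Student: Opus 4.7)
The plan is to combine the machinery assembled in this subsection into a single construction that produces a sequence of examples indexed by pairs $(d_1,d_2)$ with $d_1\geq 3$ and $d_2>d_1$. For each such pair, the surface $X=X_{d_1,d_2}$ obtained by blowing up the $d_1 d_2$ intersection points of $C_1$ and $C_2$ in $\cps^2$ and deleting the strict transform $\hat{C}_2$ is 1-convex, simply connected, and has exceptional set $\hat{C}_1$ of genus $g=(d_1-1)(d_1-2)/2\geq 1$. I would begin by exhibiting an infinite topologically distinct subfamily: the exceptional set is the maximal compact analytic subvariety, hence an intrinsic biholomorphic invariant of $X$, and its genus is strictly increasing in $d_1$. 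Fixing for instance $d_2=d_1+1$ yields an infinite list $X_{3,4},X_{4,5},X_{5,6},\ldots$ with pairwise non-homeomorphic exceptional sets, which forces the $X_{d_1,d_2}$ themselves into infinitely many topological types.

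The second step is to produce a strictly pseudoconvex $M\subset X$ inheriting these features. Let $\rho$ be a smooth exhaustion of $X$ which is strictly plurisubharmonic outside a neighborhood of $\hat{C}_1$, obtained by pulling back $\|\cdot\|^2$ from an embedding of the Remmert reduction $Y$ in affine space. For $c$ large, a generic level $M:=\{\rho<c\}$ is a smooth relatively compact strictly pseudoconvex domain containing $\hat{C}_1$; by choosing $c$ large enough the inclusion $M\hookrightarrow X$ is a homotopy equivalence, so $\pi_1(M)=1$ and $H^1(M,\Z)=0$. Applying Proposition~\ref{prop:B-isom} to the 1-convex manifold $M$ (with the same exceptional set $\hat{C}_1$) gives $B(M)\cong H^1(\hat{C}_1,\Z)\cong\Z^{2g}$, which is nonzero.

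For the third step I would invoke the injection $B(M)\otimes\Q\hookrightarrow\mathcal{E}/C^\infty(M)$ established above: any nontrivial element of $B(M)$ supplies an exact real $(1,1)$-form on $M$ not of the form $\sqrt{-1}\partial\ol{\partial} f$, so the $\partial\ol{\partial}$-lemma fails on $M$. On the other hand the Nakai--Moishezon computation in the text already shows that the divisor $D=K_Y+2\hat{C}_2$ is ample on $Y$, hence carries an Hermitian metric of positive curvature; the section cutting out $\hat{C}_2$ trivializes $\mathcal{O}_Y(2\hat{C}_2)|_X$, and under this trivialization the induced metric on $K_X|_M=\mathbf{K}_M$ retains its positive curvature. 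Thus $\mathbf{K}_M>0$ and Theorem~\ref{thm:K-E} furnishes the complete K\"ahler--Einstein metric on $M$.

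The main obstacle is the second step: extracting an $M$ that is simultaneously strictly pseudoconvex, contains the entire exceptional set, and is homotopy equivalent to $X$, in a way consistent enough to compute $B(M)$ via Proposition~\ref{prop:B-isom} and to inherit positivity of $\mathbf{K}_M$ from the ambient ampleness of $D$. Everything else is assembly of results already in place; the infinite-family claim then reduces to the elementary observation that $g=(d_1-1)(d_1-2)/2\to\infty$ as $d_1\to\infty$, while the rank $2g$ of $B(M)$ serves as a numerical witness to the failure of the $\partial\ol{\partial}$-lemma that likewise diverges along the family.
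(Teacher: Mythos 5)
Your proposal follows essentially the same route as the paper: the Col\c{t}oiu surfaces $X_{d_1,d_2}$, the identification $B(M)\cong H^1(\hat{C}_1,\Z)=\Z^{2g}$ for a sufficiently large simply connected strictly pseudoconvex sublevel set $M$, the injection $B(M)\otimes\Q\hookrightarrow\mathcal{E}/C^\infty(M)$ to defeat the $\partial\ol{\partial}$-lemma, and positivity of $\mathbf{K}_M$ inherited from $K_Y+2\hat{C}_2>0$ via the trivialization of $[\hat{C}_2]$ off $\hat{C}_2$. The one soft spot is your justification of topological distinctness: non-homeomorphic exceptional sets (a biholomorphic, not topological, invariant) do not by themselves separate the ambient surfaces, but the conclusion is immediate anyway since, e.g., $b_2(X_{d_1,d_1+1})$ grows without bound with $d_1$.
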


The above arguments lead to the following more general result which is perhaps worth mentioning.
\begin{prop}
Let $X$ be a complex manifold with $H_1(X,\R)=0$.  Then $X$ satisfies the $\partial\ol{\partial}$-lemma if and only if
$H^1(X,\mathcal{O})=0$.
\end{prop}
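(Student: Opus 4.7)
The plan is to translate the $\partial\ol{\partial}$-lemma, which by the preceding lemma is equivalent to surjectivity of $\Psi:C^\infty(X)\to\mathcal{E}$, into a statement about $\ol{\partial}$-exactness of the $(0,1)$-part of a real $1$-form primitive, and then use the Dolbeault isomorphism $H^1(X,\mathcal{O})\cong H^{0,1}_{\ol{\partial}}(X)$ together with $H^1(X,\R)=0$ to pass between the two versions of triviality. I would argue both implications directly at the level of forms.

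For the implication $H^1(X,\mathcal{O})=0\Rightarrow\partial\ol{\partial}$-lemma, let $\alpha\in\mathcal{E}$ be a smooth exact real $(1,1)$-form and pick a real primitive $\gamma$ with $\alpha=d\gamma$ (any complex primitive can be averaged with its conjugate). Decompose $\gamma=\gamma^{1,0}+\gamma^{0,1}$ with $\gamma^{1,0}=\overline{\gamma^{0,1}}$; purity of type of $\alpha$ forces $\ol{\partial}\gamma^{0,1}=0$, so $\gamma^{0,1}$ represents a Dolbeault class in $H^{0,1}_{\ol{\partial}}(X)\cong H^1(X,\mathcal{O})=0$, hence $\gamma^{0,1}=\ol{\partial}h$ for some smooth complex $h$. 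A direct computation using $\gamma^{1,0}=\partial\ol{h}$ gives $\alpha=d\gamma=\partial\ol{\partial}(h-\ol{h})=\sqrt{-1}\partial\ol{\partial}(2\operatorname{Im}h)$, placing $\alpha$ in the image of $\Psi$.

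For the converse, assume the $\partial\ol{\partial}$-lemma holds and let $\beta$ be a $\ol{\partial}$-closed $(0,1)$-form; I want $[\beta]=0$ in $H^{0,1}_{\ol{\partial}}(X)$. Form the real $1$-form $\gamma=\beta+\ol{\beta}$; since $\ol{\partial}\beta=0$, the $(2,0)$ and $(0,2)$ components of $d\gamma$ vanish, so $d\gamma$ is a real exact $(1,1)$-form, hence $d\gamma=\sqrt{-1}\partial\ol{\partial}f$ for some real $f$. The key step is to observe $\sqrt{-1}\partial\ol{\partial}f=d\eta$ with $\eta=\tfrac{\sqrt{-1}}{2}(\ol{\partial}f-\partial f)$, which is real; consequently $\gamma-\eta$ is a closed real $1$-form. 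This is precisely where the hypothesis $H_1(X,\R)=0$ (equivalently $H^1(X,\R)=0$) is used: it produces a real function $u$ with $\gamma-\eta=du$. Reading off the $(0,1)$-part yields $\beta=\ol{\partial}u+\tfrac{\sqrt{-1}}{2}\ol{\partial}f=\ol{\partial}(u+\tfrac{\sqrt{-1}}{2}f)$, so $[\beta]=0$.

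The only subtle point, and the single step one has to think about, is the bookkeeping of bidegrees and reality in the converse: one must package $\sqrt{-1}\partial\ol{\partial}f$ as $d$ applied to an honestly real $1$-form so that the auxiliary closed form $\gamma-\eta$ really lies in the real cohomology group killed by the topological hypothesis. Once that real $\eta$ is written down both directions collapse to short formal manipulations.
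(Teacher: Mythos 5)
Your proof is correct in both directions, but it takes a genuinely different route from the paper's. The paper argues through line bundles: since $H_1(X,\R)=0$ gives $H^1(X,\Z)\cong\Hom(H_1(X,\Z),\Z)=0$, the group $\pic^o X$ of topologically trivial holomorphic line bundles is identified with $H^1(X,\mathcal{O})$; if the $\partial\ol{\partial}$-lemma holds, the curvature of any Hermitian metric on such a bundle can be gauged away, so the bundle is classified by a character in $\Hom(H_1(X,\Z),S^1)$, and choosing $p$ with $p\cdot H_1(X,\Z)=0$ forces the $p$-th power to be holomorphically trivial, contradicting the existence of an element of $H^1(X,\mathcal{O})$ all of whose powers are nontrivial. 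You instead work directly in the Dolbeault complex: the decomposition $\gamma=\gamma^{1,0}+\overline{\gamma^{1,0}}$ of a real primitive, the identity $\sqrt{-1}\partial\ol{\partial}f=d\bigl(\tfrac{\sqrt{-1}}{2}(\ol{\partial}f-\partial f)\bigr)$ (which is just $dd^c f$ in the paper's notation), and the isomorphism $H^1(X,\mathcal{O})\cong H^{0,1}_{\ol{\partial}}(X)$. Your version buys several things: it proves both implications explicitly (the paper only spells out that the $\partial\ol{\partial}$-lemma forces $H^1(X,\mathcal{O})=0$, and indeed your forward direction shows the converse needs no topological hypothesis at all); it uses the hypothesis only in the clean form $H^1(X,\R)=0$; and it avoids the paper's choice of an exponent $p$ annihilating $H_1(X,\Z)$, which tacitly assumes the torsion of $H_1$ has bounded exponent --- automatic for finitely generated $H_1$ but not for an arbitrary noncompact manifold. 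What the paper's route buys is coherence with the surrounding discussion: it exhibits the failure of surjectivity of $\Psi$ as being detected by flat line bundles, in parallel with the role of $B(X)$ in the examples of Col\c{t}oiu.
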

\begin{proof}
Denote by $\pic^o X$ the subgroup of $\pic X$ of topologically trivial line bundles.  Then we have
\begin{equation}
\pic^o X =H^1(X,\mathcal{O})/\im\bigl[H^1(X,\Z)\rightarrow H^1(X,\mathcal{O}) \bigr].
\end{equation}
Choose $p\in\Z_+$ so that $p\cdot H_1(X,\Z)=0$.  Let $\mathbf{L}\in\pic^o X$ be an element with $\mathbf{L}^p$ non-trivial.
For any Hermitian metric $h$, $\omega_{\mathbf{L}}=\frac{\sqrt{-1}}{2\pi}\Theta_h \in\mathcal{E}$.
If $\omega_{\mathbf{L}}=\sqrt{-1}\partial\ol{\partial}f$, then there exists a metric $h'$ with a flat connection on $\mathbf{L}$.
But a flat connection corresponds to an $\alpha_{\mathbf{L}}\in\Hom(\pi_1(X), S^1)=\Hom(H_1(X,\Z),S^1)$.
We have $\alpha_{\mathbf{L}}^p =\alpha_{\mathbf{L}^p} =1$, which implies that $\mathbf{L}^n$ is trivial.
\end{proof}

The following shows that a counterexample to Conjecture~\ref{conj:weak-hyp} would have to have singular exceptional set.
\begin{prop}
Let $(M,M')$ be a strictly pseudoconvex finite manifold with K\"{a}hler form $\omega\in -c_1(M)$.  If the exceptional set
$E\subset M$ is smooth, possibly not connected, then $\mathbf{K}_{M'}$ admits an Hermitian metric which is positive
in a neighborhood $\tilde{M}$ of $\ol{M}$.
\end{prop}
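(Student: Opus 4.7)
The strategy is to first build a Hermitian metric on $\mathbf{K}_{M'}$ whose Chern curvature is positive on an open neighborhood of $E$, exploiting the smoothness of $E$ to invoke the $\partial\ol{\partial}$-lemma on the compact K\"ahler manifold $E$ itself, and then to globalize the positivity to a neighborhood of $\overline{M}$ by the same cutoff/plurisubharmonic-domination trick used in the proof of Theorem~\ref{thm:K-E}.

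I would start with an arbitrary Hermitian metric $h'$ on $\mathbf{K}_{M'}$ and its curvature $\theta'=\frac{\sqrt{-1}}{2\pi}\Theta_{h'}$. Since $\omega$ and $\theta'|_M$ both represent $c_1(\mathbf{K}_{M'})|_M$ in $H^2(M,\R)$, their restrictions to $E$ are de Rham cohomologous as real $(1,1)$-forms. The submanifold $E$, being smooth and compact with K\"ahler form $\omega|_E$ inherited from $M$, is a compact K\"ahler manifold, so the $\partial\ol\partial$-lemma supplies $u_E\in C^\infty(E,\R)$ with
\[
\omega|_E-\theta'|_E=\sqrt{-1}\,\partial\ol\partial u_E.
\]
Extending $u_E$ to a smooth $u\in C^\infty(M',\R)$ via a tubular neighborhood of $E$ and a cutoff, the metric $\hat h=e^{-2\pi u}h'$ has curvature $\frac{\sqrt{-1}}{2\pi}\Theta_{\hat h}=\theta'+\sqrt{-1}\,\partial\ol\partial u$, which along $E$ equals $\omega|_E$. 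By continuity $\frac{\sqrt{-1}}{2\pi}\Theta_{\hat h}$ is strictly positive on an open tubular neighborhood $U$ of $E$ in $M'$.

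To propagate positivity to a neighborhood of $\overline M$ I would use that $M$ is $1$-convex with exceptional set $E$: pulling back a strictly plurisubharmonic exhaustion from the Remmert reduction $Y$ gives a smooth plurisubharmonic function on $M$ that is strictly plurisubharmonic on $M\setminus E$. Patching this with a strictly plurisubharmonic extension of the defining function $\phi$ past $\partial M$ (possible because $\partial M$ is strictly pseudoconvex) yields $\psi\in C^\infty(\tilde M,\R)$ on some neighborhood $\tilde M$ of $\overline M$ in $M'$, plurisubharmonic on $\tilde M$ and strictly plurisubharmonic off $E$. Setting $h:=e^{-A\psi}\hat h$,
\[
\tfrac{\sqrt{-1}}{2\pi}\Theta_h=\tfrac{\sqrt{-1}}{2\pi}\Theta_{\hat h}+\tfrac{A}{2\pi}\sqrt{-1}\,\partial\ol\partial\psi,
\]
which on $U$ is a sum of a positive and a nonnegative form, and on the compact set $\overline M\setminus U$ the second summand is uniformly positive while the first is bounded, so the sum is positive throughout $\tilde M$ provided $A\gg 0$.

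The main obstacle, and the sole place where smoothness of $E$ is invoked, is the first step: the $\partial\ol\partial$-lemma on $E$. On a singular exceptional set this lemma can fail and $[\omega|_E]\in H^{1,1}(E)$ need not be representable as the curvature of any Hermitian metric on $\mathbf{K}_{M'}|_E$; this is exactly the obstruction captured by the group $B(X)$ of Section~\ref{sec:com-dd-bar} and is the genuine difficulty standing between the present result and Conjecture~\ref{conj:weak-hyp} in its full generality. The extension of $\psi$ past $\partial M$ is a minor technicality handled by strict pseudoconvexity of the boundary, and the final cutoff-and-dominate step is essentially verbatim from the proof of Theorem~\ref{thm:K-E}.
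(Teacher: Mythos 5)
Your overall strategy coincides with the paper's: normalize the curvature of $\mathbf{K}_{M'}$ along the smooth compact K\"ahler components of $E$ via the $\partial\ol{\partial}$-lemma, then twist by $e^{-A\psi}$ with $\psi$ pulled back from the Remmert reduction. But there is a genuine gap at the step ``by continuity $\frac{\sqrt{-1}}{2\pi}\Theta_{\hat h}$ is strictly positive on an open tubular neighborhood $U$ of $E$.'' What you actually know at a point $x\in E$ is only that the \emph{pullback} of $\Theta_{\hat h}$ to $T_xE$ equals $\omega|_E>0$: the identity $\omega|_E-\theta'|_E=\sqrt{-1}\partial\ol{\partial}u_E$ is an identity of forms on $E$, and after the cutoff extension of $u_E$ the components of $\Theta_{\hat h}$ in the directions of the normal bundle $\mathbf{N}_{M'/E}$ are completely uncontrolled. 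A real $(1,1)$-form that is positive on the proper subspace $T_xE\subset T_xM'$ need not be positive at $x$, let alone on a neighborhood of $E$; compare $\sqrt{-1}\bigl(dz\wedge d\ol{z}-dw\wedge d\ol{w}\bigr)$ along $\{w=0\}\subset\C^2$. So your $U$ may fail to exist, and the subsequent case division collapses precisely where it is needed: $\sqrt{-1}\partial\ol{\partial}\psi$ degenerates exactly on $E$ (it vanishes on $T_xE$), so your ``nonnegative summand'' cannot supply the missing normal positivity there under the bookkeeping you use.

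The paper closes exactly this hole by recording one further property of $\psi$: being the pullback of a strictly plurisubharmonic function under the Remmert reduction, $\sqrt{-1}\partial\ol{\partial}\psi$ is strictly positive on the normal bundle $\mathbf{N}_{M'/E}$ at points of $E$ (its condition (\ref{eq:stron-pseu3})). One then checks positivity of $-\sqrt{-1}\partial\ol{\partial}\log h' + A\sqrt{-1}\partial\ol{\partial}\psi$ pointwise on all of $T_xM'$ for $x\in E$ --- tangential directions from $\omega|_{E_i}$, normal directions from $A\,\partial\ol{\partial}\psi$, mixed terms absorbed for $A\gg0$ --- and only \emph{then} invokes continuity and compactness to obtain positivity on a neighborhood $\tilde{M}$ of $\ol{M}$. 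Your argument can be repaired by reordering it in this way, or equivalently by adding to $u$ a term vanishing to second order on $E$ with positive normal complex Hessian before claiming positivity near $E$. Your closing diagnosis of where smoothness of $E$ enters (the $\partial\ol{\partial}$-lemma on the components of $E$, and its failure in the singular case as measured by $B(X)$) is correct and matches the paper's discussion.
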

\begin{proof}
Let $h$ an Hermitian metric on $\mathbf{K}_{M'}$.  Since each connected component $E_i$ of $E$ is obviously K\"{a}hler,
there exists an $f\in C^{\infty}(M')$ so that the metric $h'=e^f h$ satisfies
\begin{equation}
\begin{split}\label{eq:except-ddbar}
\omega|_{E_i} & =-\frac{\sqrt{-1}}{2\pi}\partial\ol{\partial}\log h'|_{E_i} \\
              & =\bigl(-\frac{\sqrt{-1}}{2\pi}\partial\ol{\partial}\log h -\frac{\sqrt{-1}}{2\pi}\partial\ol{\partial}f\bigr)|_{E_i}.
\end{split}
\end{equation}
We may assume that $M'$ is 1-convex, and let $\pi:M' \rightarrow Y$ be the Remmert reduction.  If $\psi$ is
the pull-back by $\pi$ of a strictly plurisubharmonic function on $Y$, then we have
\begin{align}
\sqrt{-1}\partial\ol{\partial}\psi(x) & \geq 0, \text{for }x\in M'\\ \label{eq:stron-pseu1}
\sqrt{-1}\partial\ol{\partial}\psi(x) & >0,  \text{for } x\in M'\setminus E\\ \label{eq:stron-pseu2}
\sqrt{-1}\partial\ol{\partial}\psi(x) & >0, \text{on } \mathbf{N}_{M'/E}\text{ for }x\in E.\\ \label{eq:stron-pseu3}
\end{align}
Then it is easy to see from (\ref{eq:except-ddbar}), (\ref{eq:stron-pseu1}), (\ref{eq:stron-pseu2}), and (\ref{eq:stron-pseu3}) that
for $A>0$ sufficiently large the metric $e^{-A\psi} h'$ has curvature
\begin{equation}
-\sqrt{-1}\partial\ol{\partial}\log h' +A\sqrt{-1}\partial\ol{\partial}\psi >0,\quad\text{on }\tilde{M},
\end{equation}
where $\tilde{M}$ is a relatively compact neighborhood of $\ol{M}$.
\end{proof}

\section{Normal CR infinity}

\subsection{Consequences of a normal CR infinity}

Assuming that the CR boundary $S=\partial M$ of a strictly pseudoconvex manifold is normal has strong consequences on $M$.
First we mention an embedding result for 1-convex manifolds in~\cite{EtoKazWat} which was generalized to complex spaces in
~\cite{VanTan}.
\begin{thm}\label{thm:1-conv-emb}
Let $X$ be a 1-convex complex space.  Then $X$ is embeddable in $\cps^M \times\C^N$ if and only if there is a positive
holomorphic line bundle on $X$.
\end{thm}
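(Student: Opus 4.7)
The plan is to handle the two implications separately, with the easy direction first.

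For the necessity, suppose $\iota:X\hookrightarrow\cps^M\times\C^N$ is a closed holomorphic embedding, and let $p_1,p_2$ be the two projections. Equip $p_1^*\mathcal{O}(1)$ with the Hermitian metric $h_{\sm{FS}}\otimes e^{-|z|^2}$, where $h_{\sm{FS}}$ is the Fubini--Study metric and $|z|^2=\sum|z_j|^2$ is the squared Euclidean norm pulled back by $p_2$. Its Chern curvature is $\omega_{\sm{FS}}+\sqrt{-1}\partial\ol{\partial}|z|^2>0$ on all of $\cps^M\times\C^N$, so $\mathbf{L}:=\iota^*p_1^*\mathcal{O}(1)$ inherits a positive Hermitian metric.

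For sufficiency, assume $\mathbf{L}\to X$ is positive and let $\pi:X\to Y$ be the Remmert reduction to the Stein space $Y$, with exceptional set $E=\pi^{-1}(A)$. The strategy is to combine a Stein embedding of $Y$ with a Kodaira-type embedding supplied by $\mathbf{L}$ near $E$:
\begin{enumerate}
\item[(1)] By the Remmert--Bishop--Narasimhan embedding theorem for Stein spaces, choose a closed embedding $\jmath:Y\hookrightarrow\C^N$, and set $F=\jmath\circ\pi:X\to\C^N$. Then $F$ is proper, and it restricts to a biholomorphism $X\setminus E\to Y\setminus A\subset\C^N\setminus A$.
\item[(2)] Apply the Andreotti--Grauert finiteness and vanishing theorems: since $X$ is 1-convex (hence cohomologically $(n-1)$-complete) and $\mathbf{L}$ is positive, $H^i(X,\mathbf{L}^k)=0$ for $i\geq 1$ and $k\geq k_0$. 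In particular, for any coherent ideal sheaf $\mathcal{I}$ supported on a sufficiently small analytic neighborhood of $E$, the restriction $H^0(X,\mathbf{L}^k)\to H^0(X,\mathbf{L}^k/\mathcal{I}\mathbf{L}^k)$ is surjective for $k\gg 0$.
\item[(3)] Run the usual Kodaira argument on the compact analytic set $E$ (and its first infinitesimal neighborhood) to find $k$ and finitely many sections $s_0,\ldots,s_M\in H^0(X,\mathbf{L}^k)$ whose common zero locus avoids $E$ and which separate points and tangent vectors in some open neighborhood $U$ of $E$. Then $G:=[s_0:\cdots:s_M]:U\to\cps^M$ is a holomorphic embedding on $U$. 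Extend $G$ (in any holomorphic way, shrinking $U$ if necessary) to a holomorphic map $\tilde G:X\to\cps^M$; this is possible once the base locus of $\{s_i\}$ is empty on $X$, which we can arrange by adding further sections of $\mathbf{L}^k$ constructed from the Stein open $X\setminus E$.
\item[(4)] Set $\Phi=(\tilde G,F):X\to\cps^M\times\C^N$. Verify properness from that of $F$; verify injectivity using $G$ on $U$ and $F$ on $X\setminus E$; verify the immersion property using $G$ on $U$ and $F$ on $X\setminus E$. This gives the required closed embedding.
\end{enumerate}

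The essential difficulty is step (2)--(3): producing sufficiently many global sections of $\mathbf{L}^k$ on the non-compact space $X$ to give a Kodaira-type embedding on a neighborhood of the exceptional set $E$, and then arranging that these sections (possibly augmented by others coming from the Stein Remmert reduction) have empty base locus on all of $X$. Once the required vanishing $H^1(X,\mathbf{L}^k\otimes\mathcal{I})=0$ is in hand from Andreotti--Grauert, the remaining steps are the standard Kodaira construction together with the Stein embedding of $Y$.
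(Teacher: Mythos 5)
Your proposal is correct and follows essentially the same route the paper takes (which itself only sketches the argument, citing Eto--Kazama--Watanabe and Vo Van Tan): sections of $\mathbf{L}^k$ with empty base locus giving an embedding $\Psi$ of a neighborhood of the exceptional set into $\cps^M$, combined with the Narasimhan embedding $\Upsilon$ of the Stein Remmert reduction into $\C^N$, so that $\Psi\times\Upsilon$ embeds $X$ into $\cps^M\times\C^N$. The only cosmetic slip is in your step (3): once the base locus of $\{s_i\}$ is empty on all of $X$, the map $[s_0:\cdots:s_M]$ is already globally defined, so no separate extension of $G$ is needed.
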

Given a positive line bundle $\mathbf{L}$ on $X$ it shown that there is an $N_0 \in\N$ so that for $k\geq N_0$ there finitely many sections
$s_0,\ldots,s_p \in H^0(X,\mathcal{O}(\mathbf{L}^k))$ so that $\{z\in X : s_0 (z)=\cdots =s_p (z)=0\}$ is empty and the
map $\Psi :X\rightarrow\cps^p$ restricts to an embedding on a neighborhood $U$ of the exceptional set $E$ of the
Remmert reduction $\pi: X\rightarrow Y$.  This is combined with the embedding $\Upsilon:Y\rightarrow\C^n$ of the Stein space $Y$
(cf.~\cite{Nar}) gives an embedding
\begin{equation}
\Psi\times\Upsilon : X\rightarrow \cps^p \times\C^n.
\end{equation}

This gives us another necessary and sufficient condition for a strictly pseudoconvex domain $M$ to admit a K\"{a}hler-Einstein
metric.
\begin{cor}[to Theorem~\ref{thm:K-E}]\label{cor:K-ample}
A strictly pseudoconvex domain $M$ admits a K\"{a}hler-Einstein metric if and only if for some $k\geq 1$ there are finitely many
sections $s_0,\ldots,s_M \in H^0(M,\mathcal{O}(\mathbf{K}_M^k))$ inducing an embedding of a neighborhood $U$ of the exceptional set
of $M$ in $\cps^M$.
\end{cor}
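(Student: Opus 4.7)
The plan is to derive both implications from Theorem~\ref{thm:K-E}, bridging to Kodaira-style embedding data via Theorem~\ref{thm:1-conv-emb} and its proof, and re-using in the reverse direction the same cutoff-plus-exhaustion construction that opens the proof of Theorem~\ref{thm:K-E}.

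For the ``only if'' direction, suppose $M$ carries a complete negative K\"{a}hler-Einstein metric. Theorem~\ref{thm:K-E} then gives positivity of $\mathbf{K}_M$. Since $(M,M')$ is a strictly pseudoconvex finite manifold, $M$ is 1-convex, so I would invoke the effective statement recorded immediately after Theorem~\ref{thm:1-conv-emb} with $\mathbf{L}=\mathbf{K}_M$: there exists $N_0$ so that for every $k\geq N_0$ one can choose finitely many sections $s_0,\ldots,s_M\in H^0(M,\mathcal{O}(\mathbf{K}_M^k))$ with empty common zero locus whose associated map $\Psi:M\to\cps^M$ restricts to an embedding on a neighborhood $U$ of the exceptional set $E$. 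This is exactly the asserted data.

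For the ``if'' direction, suppose such sections are given for some $k\geq 1$. On $U$, where $\Psi$ is an immersion with no common zeros, the pullback $\Psi^* h_{FS}$ of the Fubini--Study metric from $\mathcal{O}(1)\to\cps^M$ is a smooth Hermitian metric on $\Psi^*\mathcal{O}(1)=\mathbf{K}_M^k|_U$ whose Chern curvature is $\Psi^*\omega_{FS}>0$. Taking its $k$-th root yields a Hermitian metric $h$ on $\mathbf{K}_M|_U$ with curvature $\tfrac{1}{k}\Psi^*\omega_{FS}>0$ on $U$. I would then transplant the opening paragraph of the proof of Theorem~\ref{thm:K-E}: pick any Hermitian metric $h'$ on $\mathbf{K}_{M'}$, interpolate via a cutoff $\varsigma_R(-\phi)$ in a defining function of $M$ to produce a metric $\tilde h$ that agrees with $h$ on a neighborhood of $E$ contained in $U$, and then tensor by $e^{-A\psi}$, where $\psi$ is the pullback of a strictly plurisubharmonic exhaustion from the Remmert reduction. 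For $A$ large enough $e^{-A\psi}\tilde h$ has strictly positive curvature in a neighborhood of $\overline M$, so $\mathbf{K}_M$ is positive and Theorem~\ref{thm:K-E} supplies the K\"{a}hler-Einstein metric.

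The only genuine point to verify is that in the ``if'' direction the three contributions---the strict positivity of $h$ near $E$, the indefinite error produced by differentiating the cutoff in the transition annulus, and the term $A\sqrt{-1}\partial\ol{\partial}\psi$ which is only nonnegative on $E$ and strictly positive off $E$---combine to yield global positivity. This is precisely the balance already carried out in the proof of Theorem~\ref{thm:K-E}: the error is supported on a compact set away from $E$ where $\sqrt{-1}\partial\ol{\partial}\psi>0$, so a sufficiently large $A$ dominates it. The forward direction is essentially a citation of Theorem~\ref{thm:1-conv-emb}, so the reverse direction is where all the work lies, but it is a standard construction.
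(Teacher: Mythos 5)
Your proof follows what the paper intends: the paper states this as an immediate corollary with no written proof, the ``only if'' direction being exactly the effective embedding statement recorded after Theorem~\ref{thm:1-conv-emb} applied to $\mathbf{L}=\mathbf{K}_M$, and the ``if'' direction being the cutoff-plus-$e^{-A\psi}$ construction from the opening of the proof of Theorem~\ref{thm:K-E}. Both halves of your argument are sound in substance.

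One detail in the ``if'' direction would fail as literally written: you cannot interpolate with $\varsigma_R(-\phi)$, because the formula $\varsigma_R(-\phi)h+(1-\varsigma_R(-\phi))h'$ requires $h$ to be defined on all of $\{-\phi>R/2\}$, and the superlevel sets of a defining function need not be contained in the given neighborhood $U$ of $E$ for any choice of $R$ (in the paper's Theorem~\ref{thm:K-E} this is harmless only because there $h$ is positive on all of $M$, so one merely needs $E\subset\{-\phi>R\}$). The fix is immediate: replace $\varsigma_R(-\phi)$ by any smooth cutoff identically $1$ near $E$ and supported in $U$ (for instance built from sublevel sets of the strictly plurisubharmonic exhaustion $\psi$ pulled back from the Remmert reduction, which do shrink to $E$). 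With that substitution the transition error is still supported on a compact set disjoint from $E$, where $\sqrt{-1}\partial\ol{\partial}\psi$ is bounded below by a positive constant, and your balance of the three curvature contributions goes through exactly as you describe.
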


\begin{prop}\label{prop:nor-st-pseudo}
Suppose $M$ is a strictly pseudoconvex manifold such that the induced CR structure on $S=\partial M$ is normal.
Then the Remmert reduction of $M$ is $\hat{M}=C(S)_{r<1}$, where $C(S)_{r<1} =\{(x,r)\in C(S) : r<1\}$ is the domain in the Sasaki cone of $S$, with its induced Sasaki structure.
\end{prop}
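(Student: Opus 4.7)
The plan is to use the Reeb field $\xi$ and its extension to a holomorphic vector field to identify the Remmert reduction $\hat M$ with the subcone $C(S)_{r<1}$.

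Normality says $\mathcal{L}_\xi \Phi = 0$, so $\xi$ is an infinitesimal CR automorphism of $S$ transverse to $D$. With $J$ the ambient complex structure of $M'$, the vector $J\xi$ is transverse to $S$ (because $\xi \in TS \setminus D$ forces $J\xi \notin JTS \cap TS = D$, hence $J\xi \notin TS$); orient so that $J\xi$ points into $M$ at the boundary. The $(1,0)$-field $Z = \xi - iJ\xi$ restricts to a CR section of $T^{1,0}M'|_S$, and by Bochner--Hartogs extension (applied component-wise to $Z$ in local charts, together with the strict pseudoconvexity of $\partial M$, which forces analytic continuation inward), $Z$ extends to a holomorphic vector field $\tilde Z$ on a one-sided collar $U \subset M$ of $\partial M$.

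Using the real flow of $J\xi = -\mathrm{Im}(\tilde Z)$ in $U$, I would parametrize $U$ as $S \times (1-\varepsilon, 1]$ with time variable tied to $r$ by $t = -\log r$. Because $\tilde Z$ is holomorphic, this parametrization intertwines the ambient $J$ with the cone complex structure $I$ of (\ref{eq:cone-cst}). Thus it is a biholomorphism $\Psi$ between a collar of $\partial C(S)_{r<1}$ inside $C(S)$ and the collar $U$ of $\partial M$ in $\overline M$.

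For the global identification, since $M$ is $1$-convex, holomorphic data on a neighborhood of $\partial M$ extend through $M$ off the exceptional set $E$ and descend to the Remmert reduction $\hat M$; this globalizes $\tilde Z$ to a holomorphic vector field on $\hat M$. The flow of $\mathrm{Im}(\tilde Z)$ is then complete on $\hat M \setminus E$ and contracts $\hat M$ onto $E$, matching $r \to 0$ in the cone. Pushing the collar biholomorphism $\Psi$ forward by this flow extends it to a biholomorphism $\hat M \setminus E \to C(S)_{0 < r < 1}$, which extends across $E$ (mapped to the cone apex) to the claimed identification $\hat M \cong C(S)_{r<1}$ of Stein spaces.

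The principal obstacle is the third step: globally extending the holomorphic vector field to $\hat M$ and establishing that its flow is complete and exhausts $C(S)_{r<1}$ down to the apex. In the quasi-regular case the flow is a proper holomorphic $\C^*$-action, and $\hat M$ is recognizable as the affine cone over the negative orbifold line bundle reviewed in Section~2. In the irregular case $\xi$ generates a nonclosed $\R$-action, so one must work with the transverse holomorphic foliation on $C(S)$ and exhaust $\hat M$ by level sets of a plurisubharmonic function built from $r$, invoking $1$-convex structure theory to prevent the flow from accumulating before reaching $E$.
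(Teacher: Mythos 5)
Your collar analysis is essentially the paper's. The components of $Z=\xi-\sqrt{-1}J\xi$ in local holomorphic coordinates are CR functions (because $\mathcal{L}_\xi$ preserves $D^{0,1}$), they extend to the pseudoconvex side by Lewy/Kohn--Rossi type extension, and the flow of $-J\xi$ (which points inward since $\eta(\xi)=-\tfrac12 d\phi(J\xi)>0$) identifies a one-sided collar of $\partial M$ biholomorphically with an annulus $\{e^{\epsilon}<r\le 1\}$ in $C(S)$ carrying the cone structure (\ref{eq:cone-cst}). The paper organizes this slightly differently --- it first passes to the Remmert reduction $N$, extends the whole group $\mathfrak{CR}(S,D,J)$ to $\ol{N}$ by pushing the coordinate functions of an embedding of the Stein reduction into $\C^N$ through Kohn--Rossi, and replaces $\xi$ by an integral element of a maximal torus so that it generates a genuine $\operatorname{U}(1)$-action --- but the collar identification is the same in substance.

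The gap is exactly where you flag it, and it is not a technicality: globalizing $\tilde Z$ across the exceptional set (or across the singular point of $\hat M$), proving completeness of the flow of $\operatorname{Im}\tilde Z$, and showing that every point of $\hat M\setminus E$ reaches the collar in finite time is essentially equivalent to the statement you are trying to prove; in the irregular case the $\R$-action generated by $J\xi$ has no evident properness, and sections of the tangent sheaf need not extend nicely over a singular Stein space. The paper closes this with a different and much softer move: once the collar $V\setminus S\subset N$ is identified with $\{e^{\epsilon}<r<1\}\subset C(S)$, it observes that $N$ and $C(S)_{r<1}$ (with the apex added) are both \emph{normal Stein spaces} filling in this common annulus, so the Hartogs extension theorem for normal Stein spaces gives isomorphisms $\mathcal{O}(C(S)_{r<1})\cong\mathcal{O}(\{e^{\epsilon}<r<1\})\cong\mathcal{O}(N)$, and since a Stein space is recovered from its algebra of holomorphic functions this yields $N\cong C(S)_{r<1}$ with no flow argument at all. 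You should replace your third step by this Hartogs/uniqueness-of-Stein-filling argument; as written, the proposal does not constitute a proof.
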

\begin{remark}
Note that $C(S)\cup\{o\}$, with the vertex, has a unique structure of a normal Stein variety~\cite{vanCo1}, and also an
affine variety~\cite{vanCo2}.  We will consider $C(S)$ as such with the addition of the vertex.
\end{remark}

In other words, $M\subset X$ is a domain in a resolution $\pi:X\rightarrow C(S)$ of the cone $C(S)$ with exceptional fiber $E=\pi^{-1}(o)$ over $o\in C(S)$.
\begin{proof}
We may suppose that $M\subset X$ with $X$ a 1-convex manifold with Remmert reduction $\pi:X\rightarrow Y$.  Thus $\pi$ maps
$M$ to the strictly pseudoconvex domain $N\subset Y$.  We first prove the following.
\begin{lem}
The action of $\mathfrak{CR}(S,D,J)$ extends to a holomorphic action on $N$.
\end{lem}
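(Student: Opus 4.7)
The plan is to extend the action in two stages: first to a one-sided collar of $\partial N$ inside $\ol{N}$ via CR extension, then to all of $N$ using the Stein property of $Y$.

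For the first stage, fix $\varphi \in \mathfrak{CR}(S,D,J)$. Since $\partial N$ is strictly pseudoconvex with $N$ on the pseudoconvex side, the classical Lewy-Kohn-Rossi extension theorem applies: every smooth CR function on $\partial N$ extends uniquely to a holomorphic function on a one-sided neighborhood inside $\ol{N}$. Applying this component-wise to $\varphi$ in local holomorphic coordinates on $Y$ near $\partial N$, and patching via uniqueness of analytic continuation, produces a biholomorphism $\tilde\varphi: U \to V$ between one-sided collar neighborhoods of $\partial N$ in $\ol{N}$.

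For the second stage, embed $Y$ as a closed analytic subvariety $\iota: Y \hookrightarrow \C^N$ (available because $Y$ is Stein), and write the components of $\iota \circ \tilde\varphi$ on the collar $U \cap N$ as $f_1,\ldots,f_N \in \mathcal{O}(U \cap N)$. The domain $N$ is itself Stein by Grauert's theorem (being a relatively compact strictly pseudoconvex domain in a Stein space), has complex dimension $n \geq 2$, is normal (inheriting normality from $Y$, which follows from $\pi_* \mathcal{O}_X = \mathcal{O}_Y$), and satisfies that $N \setminus (U \cap N)$ is compact. Hartogs extension on such spaces produces unique extensions $\tilde f_i \in \mathcal{O}(N)$. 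The defining equations of $Y \subset \C^N$, composed with $(\tilde f_1,\ldots,\tilde f_N)$, vanish on the nonempty open set $U \cap N$ and hence identically on $N$ by the identity principle, so the extended map lands in $Y$, yielding $\tilde\varphi: N \to Y$. Continuity up to $\partial N$ with $\tilde\varphi|_{\partial N} = \varphi$ then forces $\tilde\varphi(N) \subseteq N$. Uniqueness of Hartogs extension gives $\widetilde{\varphi_1 \varphi_2} = \tilde\varphi_1 \tilde\varphi_2$, so $\varphi \mapsto \tilde\varphi$ defines a holomorphic action of $\mathfrak{CR}(S,D,J)$ on $N$ extending the given action on $\partial N$.

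The main obstacle is Stage one: the CR extension must be carried out for merely smooth (not real-analytic) CR diffeomorphisms, so one must invoke Lewy-Kohn-Rossi (or Boutet de Monvel) rather than Bochner-Martinelli, and verify that the local holomorphic extensions of the components patch consistently. A secondary technical point is that the Hartogs step requires normality of the possibly singular Stein space $N$; this is exactly why the condition $\pi_* \mathcal{O}_X = \mathcal{O}_Y$ in the definition of $1$-convexity enters essentially.
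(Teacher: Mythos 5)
Your overall strategy is sound and rests on the same essential ingredients as the paper's proof --- the Kohn--Rossi/Lewy extension theorem, Narasimhan's embedding $\iota: Y\hookrightarrow\C^N$ of the Stein reduction, an identity-principle argument to show the extended map lands in the image of $Y$, and unique continuation from the boundary for the group law --- but the mechanics differ. The paper applies the \emph{global} Kohn--Rossi theorem to the CR functions $\psi^*(z_j\circ\iota)$ on $S=\partial M$, obtaining holomorphic extensions on all of $\ol{M}$ in one step, and then pushes these down to $\ol{N}$ using $\pi_*\mathcal{O}_X=\mathcal{O}_Y$; you instead extend only to a one-sided collar and then invoke the Hartogs phenomenon on the normal Stein space $N$ of dimension at least two. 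Both routes are legitimate (the paper itself uses exactly your Hartogs argument a few lines later, to identify $N$ with $C(S)_{r<1}$); yours has the minor advantage of bypassing the resolution $M$ entirely, while the paper's avoids having to justify the Hartogs phenomenon on a possibly singular space, since Kohn--Rossi already delivers a global extension on the smooth manifold $\ol{M}$.

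The one place where your argument has a genuine gap is the assertion that ``continuity up to $\partial N$ with $\tilde\varphi|_{\partial N}=\varphi$ then forces $\tilde\varphi(N)\subseteq N$.'' A holomorphic map of $\ol{N}$ into $Y$ restricting to a diffeomorphism of $\partial N$ could a priori send interior points outside $\ol{N}$; continuity alone does not rule this out. The paper closes this step with the maximum principle: taking a plurisubharmonic defining function $\phi$ of $N\subset Y$, the composition $\phi\circ\tilde\varphi$ is plurisubharmonic on $N$, continuous on $\ol{N}$, and vanishes on $\partial N$, hence is strictly negative on $N$. You already have every ingredient needed for this (strict pseudoconvexity supplies $\phi$, and you have established holomorphy of $\tilde\varphi$), so the repair is immediate, but the step does need to be made explicit.
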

\begin{proof}[Proof of Lemma]
Since $Y$ has finitely many isolated singular points, it has finite embedding dimension and there there is an embedding
$\iota: Y\rightarrow\C^N$ (cf.~\cite{Nar}).  Let $\psi\in\mathfrak{CR}(S,D,J)$, and define $f^\psi_j =\psi^*(z_j \circ\iota),
\ j=1,\ldots,N.$  We have $\ol{\partial}_b f^\psi_j =0$, i.e. $f^\psi_j $ is annihilated by $D^{0,1}\subset D\otimes\C\subset TS\otimes\C$,
so by the extension theorem of J. Kohn and H. Rossi~\cite{KohnRoss} the $f^\psi_j $ extend to holomorphic functions on
$\ol{M}$.  There are holomorphic functions $g^\phi_j :\ol{N} \rightarrow\C^N$ with $g^\phi_j \circ\pi =f^\psi_j$.
Denote $F^\psi :=(f^\psi_1 ,\ldots,f^\psi_N)$ and $G^\psi =(g^\psi_1,\ldots, g^\psi_N )$.  So $F^\psi :\ol{M}\rightarrow\C^N$
and $G^\psi :\ol{N}\rightarrow\C^N$ with $G^\psi \circ\pi = F^\psi$.

We have $\im F^\psi|_S =\im\iota|_S$.  If $h\in\mathcal{O}_{\C^N}(U)$ is any function defined with $U\cap\im\iota|_S \neq\emptyset$
with $U$ connected and vanishing on $\im\iota\subset\C^N$, then $h\circ F^\psi$ vanishes on $S\cap (F^\psi)^{-1}(U)$ so vanishes identically.  Therefore there is a neighborhood $V\subset X$ of $S$ with $F^\psi (V)\subset\im\iota$.  Now cover $F^\psi (\ol{M})$ with finitely
many neighborhoods $U_\alpha,\ \alpha=1,\ldots, m,$ for which $\im \iota \cap U_\alpha =\{h^\alpha _1 =\cdots=h^\alpha_{k_\alpha} =0\}$ for
defining functions $h^\alpha_i \in\mathcal{O}(U_\alpha)$.  Let $V_\alpha =(F^\psi )^{-1}(U_\alpha )$.  If
$V_\alpha\cap V\neq\emptyset$, then $h^\alpha_i \circ F^\psi$ vanish on $V_\alpha$.  Thus $F^\psi (V\cup V_\alpha)\subset \im\iota$.
Continuing this argument shows that $F^\psi(\ol{M})\subset\im\iota$.

Since $\iota$ maps $Y$ biholomorphically onto its image, we can define $\mu^\psi :\ol{N}\rightarrow Y$ by $\iota^{-1} \circ G^\psi$.
Let $\phi$ be a plurisubharmonic defining function of $N\subset Y$, i.e. $N=\{\phi<0\}$.  Then $(\pi\circ\mu^\psi )^* \phi$ is
plurisubharmonic and takes the value 0 on $S=\partial N$, so $(\pi\circ\mu^\psi)^* \phi (x) <0$ for $x\in M$ by the maximum principle.
Therefore we have $\mu^\psi :\ol{N}\rightarrow\ol{N}$.

Suppose $\psi_1 ,\psi_2 \in\mathfrak{CR}(S,D,J)$.  Since $\iota\circ\mu^{\psi_1}\circ\mu^{\psi_2}\circ\pi -\iota\circ\mu^{\psi_1 \circ\psi_2}\circ\pi$ vanishes on $S$ it must vanish identically.  Therefore $\mu^{\psi_1 \circ\psi_2} =\mu^{\psi_1}\circ\mu^{\psi_2}$,
so $\mathfrak{CR}(S,D,J)$ acts on $\ol{N}$ by biholomorphisms.
\end{proof}

Suppose $\xi\in\in\mathfrak{t}^+_k \subseteq\mathfrak{cr}^+(S,D,J)$.  Then we can replace $\xi$ with an integral element in
$Z_k^+ \subset\mathfrak{t}^+_k$.  This $\xi$ generates an $\operatorname{U}(1)$-action on $\ol{N}$.  If $\phi$ is a defining function for $M\subset X$,
$0< \eta(\xi)=-\frac{1}{2}d\phi(J\xi)$, so $J\xi$ points inward at $S$.  If $\epsilon<0$, then
\begin{equation}\label{eq:bound-diff}
(\epsilon,0] \times S \ni (t,x)\longrightarrow \exp(-tJ\xi)x
\end{equation}
is a diffeomorphism onto a neighborhood of $S$ in $\ol{N}$.  We consider a complex structure on $\R\times S$ which is that
of (\ref{eq:cone-cst}) in the coordinate $t=\log r$ of $\R$; that is,
\begin{equation}\label{eq:cyl-cst}
I(X):= \Phi(X) -\eta(X)\partial_t, \quad I(\partial_t) =\xi,
\end{equation}
for $X\in TS$.  It is not difficult to see that (\ref{eq:bound-diff}) is a biholomorphism between $((\epsilon,0]\times S ,I)$
and $(V,J)$ where $V$ is a neighborhood of $S$ in $\ol{N}$.

Since $C(S)_{r<1}$ and $N$ are both normal Stein spaces Hartogs' theorem implies that an holomorphic function on
$((\epsilon,0)\times S ,I)$ extends to $C(S)$ and likewise for holomorphic functions on $V\setminus S\subset N$.
Therefore $\mathcal{O}(C(S)_{r<1})\cong\mathcal{O}(N)$, and we have a biholomorphism $C(S)_{r<1} \cong N$.
\end{proof}

\begin{cor}\label{cor:Stein-dom}
If $M$ is a strictly pseudoconvex Stein domain with the CR structure on $S=\partial M$ normal, then $S=\mathbb{S}^{2n-1}$, the
$2n-1$ sphere with a transversal deformation of the standard CR structure.
\end{cor}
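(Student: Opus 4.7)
The plan is to combine Proposition~\ref{prop:nor-st-pseudo} with the Steinness of $M$ to force the vertex of the Sasaki cone to be a smooth point, so that the cone becomes biholomorphic to $\C^n$ near the origin.

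First I would invoke Proposition~\ref{prop:nor-st-pseudo} to identify the Remmert reduction $\hat{M}$ of $M$ with the truncated Sasaki cone $C(S)_{r<1}$ together with the vertex $o$ adjoined. Because $M$ is Stein it contains no compact analytic subvariety of positive dimension, so the exceptional set of $M$ as a $1$-convex manifold is empty and the Remmert reduction map $\pi\colon M \to \hat{M}$ is a biholomorphism. Hence $M$ is biholomorphic to $C(S)_{r<1}\cup\{o\}$.

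Next, since $M$ is a \emph{smooth} complex manifold of dimension $n$ and the point $o$ corresponds to a point of $M$ under $\pi$, the germ $(C(S)\cup\{o\},o)$ must be smooth of dimension $n$, hence biholomorphic to $(\C^n,0)$. The link of a smooth point of a complex space is diffeomorphic to a small round sphere about that point, so $S \cong \mathbb{S}^{2n-1}$ as smooth manifolds. The induced CR structure on $S$ is that of a small sphere in the local model $\C^n$; since the Sasaki structure on $S$ is normal, its Reeb vector field extends to a holomorphic vector field on $C(S)\cup\{o\}$ vanishing at $o$, i.e.\ to a holomorphic vector field on a germ of $(\C^n,0)$ vanishing at $0$. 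This exhibits $(D,J)$ as a transversal deformation of the standard CR structure on $\mathbb{S}^{2n-1}$ (the one coming from the standard radial Reeb on $\C^n\setminus\{0\}$).

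The main obstacle is the last identification: one must verify that the normal CR structure appearing on the link of a smooth point of the cone really does fall within the class of transversal deformations of the standard CR structure on $\mathbb{S}^{2n-1}$. Everything else follows formally from Proposition~\ref{prop:nor-st-pseudo} and the characterization of Stein spaces as having empty exceptional set in their own Remmert reduction.
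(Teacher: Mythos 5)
Your first half coincides with the paper's: Steinness forces the exceptional set of $M$ to be empty, so the Remmert reduction is a biholomorphism, Proposition~\ref{prop:nor-st-pseudo} gives $M\cong C(S)_{r<1}\cup\{o\}$, and smoothness of $M$ makes the vertex a smooth point with germ $(\C^n,0)$. But the step you yourself flag as ``the main obstacle'' is not a loose end to be checked -- it is the entire second half of the argument, and as written it is a genuine gap. Two things are missing. First, $S=\{r=1\}$ is not a small sphere about $o$, so a germ-level identification with $(\C^n,0)$ is not enough; one needs a \emph{global} biholomorphism $C(S)\cup\{o\}\cong\C^n$. Second, ``transversal deformation of the standard structure'' has a precise meaning here (equations (\ref{eq:cont-def})--(\ref{eq:compl-def})): $S$ must be realized as a graph $r'=e^{\psi}r$ over $\mathbb{S}^{2n-1}$ with $\psi$ basic for the Reeb field, and merely knowing that $\xi$ is the real part of a holomorphic vector field vanishing at $0$ does not produce this normal form.

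The paper closes the gap as follows. Replace $\xi$ by a quasi-regular element of $Z_k^+\subset\mathfrak{t}_k^+$, so it generates an honest $\operatorname{U}(1)$-action fixing $o$ and acting on $T_oC(S)$ with positive integer weights $(w_1,\dots,w_n)$; linearize this action near $o$ by averaging to get an equivariant chart $(U,z_1,\dots,z_n)$ in which $\iota(u)$ acts diagonally. Since $J\xi$ points inward along $S$, the flow $\exp(tJ\xi)$ carries $M$ into $U$ for $t\gg 0$, and conjugating by the resulting linear expanding flow identifies all of $C(S)\cup\{o\}$ with $\C^n$, with $\xi$ becoming the standard weighted rotation field -- a CR Reeb field for the standard CR structure on $\mathbb{S}^{2n-1}$. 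Then for each $z\in S$ there is a unique $t=\psi(z)$ with $(e^{w_1t}z_1,\dots,e^{w_nt}z_n)\in\mathbb{S}^{2n-1}$; this $\psi$ is basic (the $\operatorname{U}(1)$-action preserves both hypersurfaces and commutes with the flow of $J\xi$), $r'=e^{-\psi}r$ cuts out $\mathbb{S}^{2n-1}$, and $\eta'=\eta-2d^c\psi$ is exactly the transversal deformation formula. Without this construction your argument establishes only that $S$ is diffeomorphic to $\mathbb{S}^{2n-1}$ and carries \emph{some} strictly pseudoconvex normal CR structure, not that it lies in the stated deformation class.
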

See Section~\ref{subsubsect:transv} for an explanation of ``transversal deformation.''
\begin{proof}
Since $M$ is a domain in the resolution $\pi: X\rightarrow C(S)$, we must have $\pi^{-1}(o) =E=\empty$.  Thus $M\subset C(S)$ and
$C(S)$ is nonsingular.  Suppose $\xi\in Z_k^+ \subset\mathfrak{t}^+_k$.  We have an action
$\iota:\operatorname{U}(1)\rightarrow\Aut\bigl(T_o C(S)\bigr)$, with weights $(w_1,\ldots,w_n)\in\Z^n_{>0}$.
One can get an equivariant coordinate system $(U, z_1,\ldots,z_n)$, i.e. $\iota(u)(z_1,\ldots,z_n)=(u^{w_1} z_1,\ldots,u^{w_n} z_n)$,
by a simple averaging argument.  Since $\exp(tJ\xi)$ maps $M$ into $U$ for large enough $t>0$, we have $U\cong C(S)\cong\C^n$.
We have $\mathbb{S}^{2n-1} \subset\C^n$ with the standard CR structure $(D,J)$ and $\xi$ is a CR Reeb vector field.
For $z\in S$ there is a unique $t\in\R$ with $(e^{w_1 t} z_1, \ldots,e^{w_n t} z_n)\in\mathbb{S}^{2n-1}$, and if we define $\psi(z)=t$
Then if we set $r' =e^{-\psi} r$, $\mathbb{S}^{2n-1}=\{r'=1\}$.  Note that $r'\partial_{r'} =r\partial_r$, so the Euler vector field  and also the Reeb vector field is unchanged.  And the contact forms are related by
\begin{equation}
\eta' =2d^c \log r' =2d^c \log r -2d^c \psi =\eta-2d^c \psi.
\end{equation}
\end{proof}

\begin{prop}
Let $M\subset X$ be a strictly pseudoconvex domain in one of the 1-convex surfaces of Section~\ref{sec:com-dd-bar} so that
$\pi_1(M)=1$.  Then the CR structure on $S=\partial M$ is not normal.
\end{prop}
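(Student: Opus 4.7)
The plan is to derive a contradiction by computing $\pi_1(M)$ under the normality hypothesis. Suppose, for contradiction, that the CR structure on $S=\partial M$ is normal. By Proposition~\ref{prop:nor-st-pseudo}, $M$ is biholomorphic to the preimage $\pi^{-1}(C(S)_{r<1})$ inside a resolution $\pi:\tilde{X}\to C(S)\cup\{o\}$ of the Sasaki cone, with exceptional fiber $E=\pi^{-1}(o)$.

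The first step is to identify $E$ with $\hat{C}_1$. The maximal compact analytic subvariety of positive dimension is an intrinsic biholomorphic invariant of the complex manifold $M$. In the ambient surface $X$ of Section~\ref{sec:com-dd-bar}, and assuming $M$ is large enough for $\pi_1(M)=1$ to be a substantive hypothesis (in particular $M$ contains the exceptional set of $X$, since otherwise $M$ would sit inside the Stein open $X\setminus\hat{C}_1$), this invariant equals $\hat{C}_1$. Hence $E\cong\hat{C}_1$.

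The second step is the topological identification $M\simeq E$. The Remmert reduction $\hat{M}=C(S)_{r<1}\cup\{o\}$ is an open truncated cone on $S$, hence contractible. Since $\pi$ is a biholomorphism off the vertex and has fiber $E$ above it, $M$ is obtained from the half-open cylinder $[0,1)\times S$ (parametrized by the radial coordinate $r$) by collapsing $\{0\}\times S$ onto $E$ via a continuous surjection $f:S\to E$. Picking an integral Reeb $\xi\in Z^+_k$ -- such elements are dense in $\kappa(S,D,J)$ -- realizes $f$ as the $S^1$-quotient $S\to S/S^1=E$ and realizes $\tilde{X}$, near $E$, as the total space of a negative line bundle over $E$, making $M$ literally a disk subbundle. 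In either description, $M$ is (homeomorphic to) the open mapping cylinder of $f$, which deformation retracts onto $E$, so
\[
\pi_1(M)\cong\pi_1(E)=\pi_1(\hat{C}_1).
\]

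To conclude, $\hat{C}_1$ is a smooth projective curve of genus $g=\tfrac12(d_1-1)(d_1-2)\geq 1$ because $d_1\geq 3$, so its fundamental group is a nontrivial closed surface group, contradicting $\pi_1(M)=1$. The main technical step is the mapping cylinder identification, whose cleanest execution passes to a dense quasi-regular element of the Sasaki cone so that $\tilde{X}$ locally becomes a line bundle and $M$ becomes a disk subbundle retracting onto its zero section $\hat{C}_1$; the general (irregular) case then follows by density of quasi-regular Reebs, as the homotopy type of $M$ is unchanged by this choice.
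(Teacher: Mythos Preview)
Your overall strategy---identify the exceptional set with $\hat{C}_1$, show $M$ deformation retracts onto it, and conclude $\pi_1(M)=\pi_1(\hat{C}_1)\neq 1$---is sound and reaches the right contradiction, but it is a genuinely different route from the paper's, and your justification of the retraction step has a real gap.

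The paper argues via line bundles, not fundamental groups. Under normality the Remmert reduction is $\hat{M}=C(S)_{r<1}\cup\{o\}$, a cone, so $H^2(\hat{M},\Z)=0$ and hence $\pic\hat{M}=0$. But the construction in Section~\ref{sec:com-dd-bar} gives $B(M)=\Z^{2g}\neq 0$; any nontrivial $\mathbf{L}\in B(M)$ is by definition holomorphically trivial near $E$, so descends to some $\hat{\mathbf{L}}$ on $\hat{M}$, and $\pic\hat{M}=0$ forces $\mathbf{L}$ to be trivial---contradiction. This argument is shorter and immediately yields the subsequent corollary: $B(M)\neq 0$ obstructs normality of the CR boundary for \emph{any} strictly pseudoconvex domain.

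The gap in your argument is the sentence ``picking an integral Reeb $\xi\in Z_k^+$ \ldots realizes $\tilde{X}$, near $E$, as the total space of a negative line bundle over $E$.'' What Proposition~\ref{prop:nor-st-pseudo} (and the lemma inside its proof) actually gives you is that the $\mathfrak{CR}(S,D,J)$-action extends to the \emph{Remmert reduction} $\hat{M}$, not to the resolution $\tilde{X}$ or to $M$. The later theorem extending the action to $M$ requires the K\"ahler--Einstein hypothesis, which you do not have here. Without the action on $\tilde{X}$ you have no reason to identify $S/S^1$ with $E$, and the ``mapping cylinder of the $S^1$-quotient'' picture is unfounded; the closing remark about density of quasi-regular Reebs does not help, since the homotopy type of $M$ is fixed and independent of any choice of $\xi$. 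One can repair this: since $E=\hat{C}_1$ has genus $g\geq 1$ it contains no $(-1)$-curve, so the resolution is minimal and the $\operatorname{U}(1)$-action on $\hat{M}$ lifts by uniqueness of the minimal resolution of a normal surface singularity; one then checks the lifted action must fix $E$ pointwise. Alternatively, avoid the action altogether: from $M\setminus E\cong(0,1)\times S$ and a tubular neighbourhood of the smooth curve $E$ one gets the retraction directly. Either way, the missing ingredient needs to be supplied.
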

\begin{proof}
Suppose otherwise, then the Remmert reduction is $\pi:M\rightarrow\hat{M}=\{r<1\}\subset C(S)$.  Let $\mathbf{L}\in B(M)=\Z^{2g}$
be a non-trivial element.  By definition $\mathbf{L}$ is holomorphically trivial in a neighborhood of the exceptional curve $E$.
So there is an $\hat{\mathbf{L}}$ on $\hat{M}$ with $\pi^*\hat{\mathbf{L}}=\mathbf{L}$.  But obviously, $H^2(\hat{M},\Z)=0$,
so $\pic\hat{M} =0$ implying that $\mathbf{L}$ is holomorphically trivial.

This proposition also simply follows from~\cite[Prop. 1]{Colt} which give as a condition for $B(M)=0$ that
$\pi^* H^2(\hat{M},\Z)\rightarrow H^2(M,\Z)$ is injective.
\end{proof}

This proposition has an obvious generalization.
\begin{cor}
Let $M\subset X$ be a strictly pseudoconvex domain.  If $B(M)\neq 0$, then the CR structure on $\partial M$ is not normal.
\end{cor}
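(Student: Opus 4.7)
The plan is to argue by contradiction, following the template of the preceding proposition but with the hypothesis $\pi_1(M)=1$ replaced by the more general condition $B(M)\neq 0$. Assume the CR structure on $S=\partial M$ is normal. By Proposition~\ref{prop:nor-st-pseudo} the Remmert reduction of $M$ is $\pi:M\rightarrow\hat M$, with $\hat M=C(S)_{r<1}\cup\{o\}$ realized as a strictly pseudoconvex neighborhood of the apex $o$ in the normal Stein variety $C(S)\cup\{o\}$.

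The key geometric input is that $\hat M$ deformation retracts onto $\{o\}$ via the Euler flow $(r,x)\mapsto(tr,x)$, $t\in[0,1]$, of the Sasaki cone. Hence $\hat M$ is contractible and $H^k(\hat M,\Z)=0$ for every $k\geq 1$. In particular $\pi^\ast:H^2(\hat M,\Z)\to H^2(M,\Z)$ is trivially injective, so the criterion of Col\c{t}oiu invoked at the end of the preceding proposition forces $B(M)=0$, contradicting the hypothesis.

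Equivalently, and more self-containedly, one can argue directly on line bundles: any $\mathbf L\in B(M)$ is holomorphically trivial on a neighborhood of $E=\pi^{-1}(o)$, and $\pi$ is a biholomorphism off $E$, so $\mathbf L$ descends to a holomorphic line bundle $\hat{\mathbf L}$ on $\hat M$ with $\pi^\ast\hat{\mathbf L}=\mathbf L$. Since $\hat M$ is both contractible and Stein, the exponential sequence combined with Cartan's Theorem B forces $\pic \hat M=0$, so $\hat{\mathbf L}$, and hence $\mathbf L$, is trivial. The only mildly technical point, and the main thing to verify with care, is the descent of $\mathbf L$ across the isolated singular point $o\in\hat M$, which follows from Hartogs extension on the normal Stein space $\hat M$ applied to a trivializing section in a deleted neighborhood of $o$.
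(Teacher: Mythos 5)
Your argument is correct and is essentially the paper's own: the corollary is stated as an ``obvious generalization'' of the preceding proposition, whose proof is exactly your line-bundle descent argument (trivial near $E$, descend to $\hat M$, $\pic\hat M=0$), with the Col\c{t}oiu criterion $\pi^*H^2(\hat M,\Z)\to H^2(M,\Z)$ injective also cited there as an alternative. You are in fact slightly more careful than the paper, which asserts $H^2(\hat M,\Z)=0\Rightarrow\pic\hat M=0$ without mentioning the needed Stein input $H^1(\hat M,\mathcal{O})=0$.
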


\begin{defn}
A strictly pseudoconvex finite manifold $M$ is called a \emph{normal} K\"{a}hler-Einstein manifold if it has a complete K\"{a}hler-Einstein metric and its conformal CR infinity is normal.
\end{defn}

It is well known~\cite{OrnVerb}, see also~\cite{vanCo2}, that the K\"{a}hler cone $C(S)\cup\{o\}$ over a Sasaki manifold $S$ is an affine
variety.  The homomorphism $\iota :\operatorname{U}(1) \rightarrow\mathfrak{CR}(S,D,J)$ with $\iota_*\frac{\partial}{\partial\theta}=\xi\in\mathfrak{cr}^+(D,J)$
extends to $\iota:\C^* \rightarrow\mathfrak{Hol}(C(S)\cup\{o\})$, an algebraic action.
The following result is evident when this is combined with Theorem~\ref{thm:1-conv-emb}.
\begin{prop}
A normal K\"{a}hler-Einstein manifold is a domain in a quasi-projective variety.
\end{prop}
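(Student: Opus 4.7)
The plan is to realize $M$ as an open subset of a 1-convex manifold that embeds as a quasi-projective algebraic subvariety of $\cps^p \times \C^q$. First, I would use Theorem~\ref{thm:K-E} together with the K\"{a}hler-Einstein hypothesis to obtain a Hermitian metric of positive curvature on $\mathbf{K}_M$. Since positivity is an open condition and $\partial M$ is compact, a mild enlargement $\tilde{M}$ with $\bar{M} \Subset \tilde{M} \subset X$ (inside the resolution $X$ of Proposition~\ref{prop:nor-st-pseudo}) remains strictly pseudoconvex. The patching-and-twisting construction used in the opening of the proof of Theorem~\ref{thm:K-E}---gluing the positive metric on $\mathbf{K}_M$ to an arbitrary metric on $\mathbf{K}_X$ via a cutoff $\varsigma_R(-\phi)$ and twisting by $e^{-A\psi}$ for a plurisubharmonic $\psi$ on $X$---then yields a Hermitian metric of positive curvature on $\mathbf{K}_{\tilde{M}}$.

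Next, Proposition~\ref{prop:nor-st-pseudo} identifies the Remmert reduction of $\tilde{M}$ with a domain $\hat{M} = \{r < R\}$ inside the Sasaki cone $C(S) \cup \{o\}$, which by the cited~\cite{OrnVerb, vanCo2} is an affine algebraic variety equipped with the algebraic $\C^*$-action from the remark preceding the statement. Theorem~\ref{thm:1-conv-emb} then supplies a closed embedding
\[ \Psi \times \Upsilon : \tilde{M} \hookrightarrow \cps^p \times \C^q, \]
where $\Upsilon$ is the closed embedding of the affine $\hat{M}$ into $\C^q$ and $\Psi$ is the projective map associated with global sections of $\mathbf{K}_{\tilde{M}}^k$ for $k$ sufficiently large.

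The final and most delicate step, and what I expect to be the main obstacle, is to upgrade the analytic image $V := (\Psi \times \Upsilon)(\tilde{M})$ to an honest quasi-projective algebraic variety rather than merely an analytic one. The cleanest route I would pursue is relative algebraization: the Remmert map $\pi : \tilde{M} \to \hat{M}$ is proper, and $\mathbf{K}_{\tilde{M}}$ is $\pi$-ample, being positive while $\pi$ is biholomorphic off the compact exceptional set. Then by Grauert's coherence theorem the graded $\mathcal{O}_{\hat{M}}$-algebra $\bigoplus_{k \geq 0} \pi_* \mathbf{K}_{\tilde{M}}^k$ is coherent and finitely generated, with $\tilde{M} \cong \operatorname{Proj}_{\hat{M}}$ of it; the algebraic $\C^*$-action forces this data to be algebraic over the affine base, so the resulting $\operatorname{Proj}$ is quasi-projective. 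A parallel route, avoiding $\operatorname{Proj}$: since $\pi$ is proper, $V$ is closed in $\cps^p \times \C^q$, and applying the Remmert--Stein extension theorem together with GAGA to the closure $\bar{V} \subset \cps^p \times \cps^q$ shows $\bar{V}$ is projective algebraic, whence $V$ is quasi-projective. Either way, $M \subset \tilde{M} \cong V$ is open, exhibiting $M$ as a domain in a quasi-projective variety.
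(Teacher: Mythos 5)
There is a genuine gap, and it is located exactly where you predicted: the algebraization step. In fact the object you are trying to algebraize cannot be algebraized. Your $\tilde{M}$ is a relatively compact strictly pseudoconvex domain, and such a domain is never biholomorphic to a quasi-projective variety: the coordinates of the embedding $\Psi\times\Upsilon$ restrict to non-constant \emph{bounded} holomorphic functions on $\tilde{M}$ (its closure is compact), whereas on any connected quasi-projective variety $V=\ol{V}\setminus Z$ with $\ol{V}$ normal projective, every bounded holomorphic function extends across $Z$ by the Riemann extension theorem and is therefore constant. So the conclusion ``$\tilde{M}\cong V$ with $V$ quasi-projective'' is false, and no refinement of either of your two routes can rescue it. The two routes also fail on their own terms: the Remmert reduction of $\tilde{M}$ is the Stein domain $\{r<R\}\subset C(S)\cup\{o\}$, which is \emph{not} an affine variety (and is not preserved by the $\C^*$-action, which rescales $r$), so the relative $\operatorname{Proj}$ over it is only an analytic space; and Remmert--Stein cannot be applied to $\ol{V}\subset\cps^p\times\cps^q$ because the removed set $\cps^p\times(\cps^q\setminus\C^q)$ has dimension at least $\dim V$, and the closure at infinity of a noncompact analytic set is in general not analytic.

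The missing idea is that the ambient quasi-projective variety must be strictly larger than every relatively compact neighborhood of $M$; one has to use the full resolution $X\rightarrow C(S)\cup\{o\}$ of Proposition~\ref{prop:nor-st-pseudo} and, crucially, \emph{compactify it at infinity}. The paper chooses a quasi-regular Reeb field $\xi\in Z^+_k$, so that $C(S)\cong\mathbf{L}^\times$ for a negative orbibundle $\mathbf{L}\rightarrow W$, and glues in the zero section of $\mathbf{L}^{-1}\rightarrow W$ to obtain a compact orbifold $\hat{X}=X\cup W$ with $[W]|_W=\mathbf{L}^{-1}$ positive. A cut-and-paste metric argument (close in spirit to the one you quote from Theorem~\ref{thm:K-E}) produces a positively curved metric on $\mathbf{F}=\mathbf{K}_{\hat{X}}\otimes[kW]$ for $k\gg 0$, and the Baily embedding theorem then embeds $\hat{X}$ in $\cps^N$. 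The quasi-projective variety containing $M$ as a domain is $X=\hat{X}\setminus W$, not $\tilde{M}$. Your use of Theorem~\ref{thm:1-conv-emb} and of the affineness of the cone is a reasonable starting point, but without the compactification by the divisor $W$ there is no way to pass from the analytic embedding into $\cps^p\times\C^q$ to an algebraic structure.
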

\begin{proof}
We have that $M\subset X$ where $X$ is a resolution of $C(S)\cup\{o\}$ with $S=\partial M$.  We compactify $X$ to $\hat{X}$ as follows.
Choose $\xi\in Z^+_k \subset\mathfrak{t}^+_k$.   Then $C(S)$ is biholomorphic to $\mathbf{L}^\times$ where $\mathbf{L}\rightarrow W$
is an holomorphic orbibundle over $W$.  Now define an orbifold $\hat{X}$ by replacing $C(S)\subset X$ with the total space of
$\pi:\mathbf{L}^{-1} \rightarrow W$, so $\hat{X}$ is $X$ with the divisor $W$ with positive normal bundle $\mathbf{L}^{-1} =[W]|_W$ added.
(The divisor $W\subset\hat{X}$ is only $\Q$-Cartier, but the following arguments work in the orbifold setting.  See~\cite{BoyGal}.)
Let $\sigma\in H^0(\hat{X},\mathcal{O}(W))$ be a section vanishing on $W$.  Let $h$ be a metric on $[W]$ with
$\frac{\sqrt{-1}}{2\pi}\Theta_h |_W >0$.  In a neighborhood of $W$ identified with a neighborhood of the zero section of
$\pi : \mathbf{L}^{-1}\rightarrow W$ define $\tilde{h} =e^{-\pi^*h|z|^2} h$, where $z$ is the fiber coordinate, and extend $\tilde{h}$
to all of $[W]$.  Then $-\log\tilde{h}|\sigma|^2$ is strictly plurisubharmonic near $W$.  Since
$-\log\tilde{h}|\sigma|^2\rightarrow\infty$ at $W$, we can modify it away from $W$ to a plurisubharmonic function $f$ on $X$.
If we set $q=-f-\log\tilde{h}|\sigma|^2$, then $\hat{h} =e^q \tilde{h}$ is a metric on $[W]$ with
$\frac{\sqrt{-1}}{2\pi}\Theta_{\hat{h}}\geq 0$ and $\frac{\sqrt{-1}}{2\pi}\Theta_{\hat{h}}>0$ in a neighborhood $N$ of $W$.  Then for sufficiently large $k>0$, $\mathbf{F}=\mathbf{K}_{\hat{X}}\otimes [kW]$ admits a metric with positive curvature.
By the Baily embedding theorem~\cite{Bail} for sufficiently large $p$, the sections of $\mathbf{F}^p$ define an embedding
$\psi_{\mathbf{F}^p} :\hat{X} \rightarrow\cps^N$.
\end{proof}

\begin{thm}
Let $M$ be a normal K\"{a}hler-Einstein manifold with CR infinity $(S,D,J)$.  Then the action of the connected component
$\mathfrak{CR}_0(S,D,J)$ extends to a holomorphic action on $M$.  Thus there is an injection
\begin{equation}
\iota : \mathfrak{CR}_0(S,D,J) \hookrightarrow\mathfrak{Hol}(M)=\mathfrak{Isom}(M,g,J)
\end{equation}
\end{thm}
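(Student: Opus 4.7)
The plan is to combine the lemma embedded in the proof of Proposition~\ref{prop:nor-st-pseudo}, which already gives a holomorphic action of $\mathfrak{CR}(S,D,J)$ on the Remmert reduction $\hat{M}=C(S)_{r<1}$, with a lifting argument across the proper modification $\pi:M\to\hat{M}$. The identification $\mathfrak{Hol}(M)=\mathfrak{Isom}(M,g,J)$ is already established by the uniqueness Proposition~\ref{prop:unique}, so the entire content is to produce the action on $M$ itself.

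First I would observe that the $\mathfrak{CR}_0(S,D,J)$-action on $\hat{M}$ fixes the vertex $o$: if $\hat{M}$ is singular this is automatic since $o$ is the unique singular point, while if $\hat{M}$ is smooth then Corollary~\ref{cor:Stein-dom} identifies $\hat{M}$ with an open subset of $\C^n$ on which the identity component acts linearly and fixes the origin.

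Next, I would pass to infinitesimal generators. A $\zeta\in\mathfrak{cr}(S,D,J)$ produces a holomorphic vector field $X$ on $\hat{M}$ with $X(o)=0$. Using the biholomorphism $\pi:M\setminus E\to\hat{M}\setminus\{o\}$, pull $X$ back to a holomorphic vector field $\tilde X$ on $M\setminus E$, and show it extends holomorphically to $M$. Since the flow of $X$ fixes $o$, any lifted flow preserves $E$ set-theoretically; together with $\pi_*\mathcal{O}_M=\mathcal{O}_{\hat{M}}$, which forces functions on $\hat{M}$ vanishing at $o$ to pull back to functions on $M$ vanishing along $E$, this writes $\tilde X$ locally as a derivation of $\mathcal{O}_M$ that is bounded and tangent to $E$, hence holomorphic across $E$. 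Integration produces a one-parameter group of biholomorphisms of $M$, and as $\mathfrak{CR}_0(S,D,J)$ is generated by such one-parameter subgroups these assemble into a holomorphic action on $M$. The resulting $\iota$ is injective because its composition with restriction to the dense open $M\setminus E\cong\hat{M}\setminus\{o\}$ already is.

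The main obstacle will be justifying the holomorphic extension of $\tilde X$ across the divisorial components of $E$, where a naive Hartogs argument fails. A clean alternative is to invoke equivariant resolution of singularities: the connected group $\mathfrak{CR}_0(S,D,J)$ lifts to an equivariant resolution $M'\to\hat{M}$, and the birational correspondence between $M'$ and $M$, together with uniqueness of the K\"ahler--Einstein metric via Proposition~\ref{prop:unique}, transports the lifted action back to a holomorphic action on $M$.
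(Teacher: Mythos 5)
There is a genuine gap, and it sits exactly where you flagged it: the extension of the action across the exceptional set $E$. Your route (a) never uses the hypothesis that $M$ is K\"{a}hler-Einstein (equivalently, that $\mathbf{K}_M$ is positive), and without that hypothesis the statement you are trying to prove is false. Indeed, starting from a resolution $X\rightarrow C(S)\cup\{o\}$ on which the torus action does extend, blow up a point of $E$ not fixed by the action: the boundary CR manifold is unchanged and still normal, $\pi_*\mathcal{O}=\mathcal{O}$ still holds, but the action manifestly does not lift, and the coefficients of the pushed-forward vector field $\tilde X$ blow up near the new exceptional divisor. So the assertion that $\tilde X$ is ``bounded and tangent to $E$'' is not a consequence of $\pi_*\mathcal{O}_M=\mathcal{O}_{\hat M}$; it is essentially equivalent to what you are trying to prove, and it can only come from the positivity of $\mathbf{K}_M$ (note that in the example above the new space has $\mathbf{K}$ negative along the new exceptional divisor, which is how the K\"{a}hler-Einstein hypothesis excludes it). Your route (b) also does not close the gap: an equivariant resolution $M'$ is only \emph{bimeromorphic} to $M$, and Proposition~\ref{prop:unique} takes a biholomorphism as its input --- it cannot manufacture one from a birational correspondence, nor need $M'$ carry a complete K\"{a}hler-Einstein metric at all, since $\mathbf{K}_{M'}$ need not be positive.

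The paper's mechanism is precisely the missing ingredient: it compactifies $X$ to $\hat X$ by adding a divisor $W$ at infinity, forms the very ample $\mathbf{F}^p=\mathbf{K}^p_{\hat X}\otimes[kpW]$, and shows that for a quasi-regular Reeb field $\xi\in Z^+_k$ the algebraic $\C^*$-action on $C(S)$ pulls sections of $\mathbf{F}^p$ back to sections of $\mathbf{F}^p$ (regularity along $E\times\C^*$ from smoothness at $z=1$, pole order at most $kp$ along $W\times\C^*$). Since the canonical bundle is functorial, the projective embedding $\psi_{\mathbf{F}^p}$ is then equivariant on the dense set $\hat X\setminus E$, so the $\C^*$-action preserves $\psi_{\mathbf{F}^p}(\hat X)$ and hence extends to $X$. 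This is where the ampleness of $\mathbf{K}_M$ near $E$ (Corollary~\ref{cor:K-ample}) enters irreplaceably. Finally the paper bootstraps from the integral Reeb fields to the maximal torus and, using that every element of a compact connected group lies in a maximal torus (with the round sphere handled separately), to all of $\mathfrak{CR}_0(S,D,J)$; your plan to integrate arbitrary one-parameter subgroups skips this reduction as well. Your starting point --- the Lemma inside Proposition~\ref{prop:nor-st-pseudo} giving the action on the Remmert reduction --- is the right one, but the passage across $E$ needs the pluricanonical embedding argument, not a Hartogs-type extension.
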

\begin{proof}
Let $\mathbf{F}^p=\mathbf{K}^p_{\hat{X}}\otimes [kpW]$ be the very ample bundle as above.
Choose any $\xi\in Z^+_k \subset\mathfrak{t}^+_k$, then as above the action of $\C^*$ on $C(S)$ is algebraic.
Let $\sigma\in H^0(\hat{X},\mathcal{O}(\mathbf{F}^p))$, so $\sigma|_{C(S)} \in H^0(C(S),\mathcal{O}(\mathbf{K}_{C(S)}^p))$.
If $\alpha: C(S)\times\C^* \rightarrow C(S)$ is the projection, then we have a section $\tilde{\sigma}$ of $\alpha^* \mathbf{K}_{C(S)}^p$
$\tilde{\sigma}=\iota(z)^*\sigma|_{C(S)}$, $z\in\C^*$.  And $\tilde{\sigma}$ is a rational section of
$\alpha^*\mathbf{K}^p_{\hat{X}}$, where $\alpha:\hat{X}\times\C^* \rightarrow\hat{X}$ is again the projection.
We know that $\tilde{\sigma}$ is regular except perhaps on some $E_i \times\C^*$ where $E_i\subset E$ is an exceptional divisor of $X$ or
on $W\times\C^*$.  But since $\tilde{\sigma}|_{X\times\{1\}}$ is smooth, it is easy to see that $\tilde{\sigma}$ is regular
along $E\times\C^*$.  It is easy to see that $\tilde{\sigma}$ has at most a pole of order $kp$ along $W\times\C^*$.
So $\iota(z)^*\sigma\in H^0(\hat{X},\mathcal{O}(\mathbf{F}^p))$ for $z\in\C^*$.

As above we have the embedding
\begin{equation}
\psi_{\mathbf{F}^p} :\hat{X} \rightarrow\cps^N=\mathbb{P}(H^0(\hat{X},\mathcal{O}(\mathbf{F}^p))^*).
\end{equation}
If we denote the action of $\C^*$ on $\cps^N$ by $\iota(z)$, then for $x\in\hat{X}\setminus E$
$\iota(z)(\psi_{\mathbf{F}^p}(x) =\psi_{\mathbf{F}^p}(\iota(z)(x))$.  So $\C^*$ preserves
$\psi_{\mathbf{F}^p}(\hat{X}\setminus E) \subset\cps^N$.  Since $E$ is nowhere dense, it is easy to see that
$\psi_{\mathbf{F}^p}(\hat{X})\subset\cps^N$ is preserved by the action of $\C^*$ on $\cps^N$.

For each $\xi\in Z^+_k$ the $\operatorname{U}(1)$ action extends to $X$.  These actions generate $T_k \subset\mathfrak{CR}(S,D,J)$.
Since all maximal tori are conjugate, $Z^+_k$ generates $\mathfrak{t}_k$ for all maximal tori.
So the action of every maximal torus $T_k \subset\mathfrak{CR}(S,D,J)$ extends to $M$.
When $\mathfrak{CR}(S,D,J)$ is compact every element of $\mathfrak{CR}_0(S,D,J)$ is contained in a maximal torus so the result
follows.  Otherwise, $(S,D,J)$ is the round sphere $\mathfrak{CR}(\mathbb{S}^{2n-1},D,J)=\operatorname{PSU}(1,n)$ and $M=\mathcal{H}^n_{\C}$
the complex hyperbolic space, and the theorem follows.
\end{proof}

\subsection{Transversal deformations}\label{subsubsect:transv}

Every normal K\"{a}hler-Einstein manifold $M$, with fixed CR Reeb vector field, has a natural infinite dimensional space of deformations
parameterized by basic functions on $S=\partial M$ with sufficiently small 2nd derivatives.  By basic we mean invariant under the Reeb
action of $\xi$.  These correspond to \emph{transversal deformations} of the CR structure on $S$.

As above $S=\{r=1\}\subset C(S)$, so we may take $r^2 -1$ as the defining function of $M$.
The CR distribution $D=\ker\eta$, where $\eta=2d^c \log r$.  Here $\eta$ is the unique 1-form with $\ker\eta =D$ and $\xi\contr\eta=1$.

If $\psi\in C^\infty_B(S)$ is a basic function, which we may take as a function on $C(S)$, then set $r'=e^\psi r$.
Then $r' =1$ defines the boundary $S'$ of a domain in $C(S)$ with defining function $r'^2 -1$.
One can check that $S'$ is naturally diffeomorphic to $S$, one has $\xi\contr\eta'=1$, and this alters the CR structure on $S$ by
\begin{gather}
\eta' =\eta +2d^c \psi \label{eq:cont-def}\\
\Phi' =\Phi -\xi\otimes\eta'\circ\Phi.\label{eq:compl-def}
\end{gather}
If $\eta' \wedge (d\eta')^{n-1}$ is nowhere zero, this defines another normal strictly pseudoconvex CR structure on $S$.
This is equivalent to the new Levi form $\mathbf{L}^{D'}=d\eta'$ being positive.

\begin{figure}[tbh]
 \centering
 \includegraphics[scale=1]{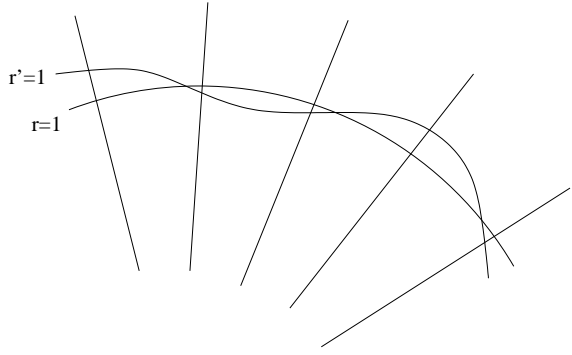}
 \caption{Transversal deformation}
 \label{fig:transverse}
\end{figure}

Equations (\ref{eq:cont-def}) and (\ref{eq:compl-def}) define the deformed CR structure on $S$.  Alternatively, one may fix
the complex structure on $C(S)$ deform the domain via $r'=e^\psi r$.  Figure~\ref{fig:transverse} shows the respective domains
$C(S)_{r<1}$ and $C(S)_{r'<1}$.

Another type of deformation will be of interest in the final section which we call \emph{transversal deformations of the second kind}.
Let $\alpha\in H^1_B(S)$, then we have the CR structure on $S$
\begin{equation}\label{eq:cont-def2}
\eta' =\eta +\alpha,
\end{equation}
with $\Phi$ given again by (\ref{eq:compl-def}).  If $\alpha =df$, with $f\in C_B^\infty (S)$, then the new CR structure is just a gauge
transformation of $(D,J)$ in the Reeb direction.

\subsection{possible CR infinities}

We consider the problem of which strictly pseudoconvex CR manifolds $S$ are CR infinities of some K\"{a}hler-Einstein
manifold.
\begin{prob}
Which strictly pseudoconvex CR manifolds $S$ are conformal CR infinities of complete K\"{a}hler-Einstein manifolds.
\end{prob}
By Theorem~\ref{thm:K-E} this is equivalent to whether there exists a complex manifold $M$ with $\partial M=S$ and
$\mathbf{K}_M$ positive.  One necessary condition is that $S$ is embeddable, meaning there is a smooth embedding
$S\hookrightarrow\C^N$ with the CR structure on $S$ induced from the complex structure of $\C^N$.
Conversely by a theorem of Harvey and Lawson~\cite{HarLaw} $S=\partial M$ for some $M$ if $S$ is embeddable.
It is known~\cite{BdeMon} that all strictly pseudoconvex $(S,D,J)$ are embeddable in dimension $2n-1 \geq 5$.
But generic perturbations of the standard CR structure on $\mathbb{S}^3$ are nonembeddable.

We consider the more restricted Problem~\ref{prob:bound-K-E}, considering only normal CR structures.
We can prove a negative result in case $S$ has a normal CR structure $(D,J)$.  We will need a definition.
\begin{defn}
We say that a normal CR manifold $(S,D,J)$ has \emph{property S-E} if $c_1(D,J)=0$ and for $\xi\in\mathfrak{cr}^+(D,J)$
the transversal first Chern class $c_1^B >0$, i.e. represented by a positive $(1,1)$-form.
\end{defn}
If this property holds for some $\xi\in\mathfrak{cr}^+(D,J)$, then it holds for all of $\mathfrak{cr}^+(D,J)$, so it is
intrinsic to $(D,J)$.  If $\xi'\in\mathfrak{cr}^+(D,J)$, then by conjugation with an element of $\mathfrak{CR}(D,J)$
we may suppose $\xi,\xi' \in\mathfrak{t}^+$ where $\mathfrak{t}$ is the Lie algebra of a maximal torus of $\mathfrak{CR}(D,J)$.
The respective contact forms satisfy $\eta' =f\eta$ where $f=(\eta(\xi'))^{-1}$, so
\begin{equation}\label{eq:con-Kah}
(d\eta')^{n-1} |_D =f^{n-1}(d\eta)^{n-1}|_D.
\end{equation}
Recall that $\frac{1}{2\pi}\ric^T(\frac{1}{2}d\eta)\in c_1^B$, where $c_1^B$ is with respect to the foliation generated by $\xi$.
Thus from (\ref{eq:con-Kah}) with respect to the foliation of $\xi'$
\begin{equation}
c_1^B \ni \frac{1}{2\pi}\ric^T (\frac{1}{2}d\eta')=\frac{1}{2\pi}\ric^T (\frac{1}{2}d\eta) -\frac{\sqrt{-1}}{2\pi}\partial\ol{\partial}\log f^{n-1}.
\end{equation}
Here the Ricci forms are computed on $(D,J)$.

This is precisely the topological condition for the associated Sasaki structure $(S,g,\eta,\xi,\Phi)$ to
admit a possible transversal deformation to a Sasaki-Einstein structure.  Sasaki manifolds satisfying this condition
have been studied extensively (cf.~\cite{GauMarSpaWal1,GauMarSpaWal1,BoyGalKol,BoyGalKolThom,FutOnoWang},) mainly in order to construct new Einstein manifolds.

In terms of the K\"{a}hler cone $C(S)$ property S-E is equivalent to the existence of an holomorphic $(n,0)$-form
$\Omega$ with $\mathcal{L}_\xi \Omega =a\sqrt{-1}\Omega$, with $a>0$, and satisfying
\begin{equation}\label{eq:hol-form}
\left(\frac{i}{2}\right)^n (-1)^{\frac{n(n-1)}{2}}\Omega\wedge\ol{\Omega} =e^h \frac{1}{n!}\omega^n,
\end{equation}
where $\omega$ is the natural K\"{a}hler metric on $C(S)$ for the Sasaki manifold $(S,\frac{n}{a}\xi, D,J)$ and $h$ is invariant under the action generated by $\xi$ and $r\partial_r$.  See~\cite{FutOnoWang}.
If $S$ is not simply connected then one may have to take $\Omega$ to be multivalued, i.e. it defines a section of
$\mathbf{K}_{C(S)}^{\otimes p}$ for some $p\in\N$.  We have $e^h \omega^n$ defining an Hermitian metric on $\mathbf{K}_{C(S)}$
via (\ref{eq:hol-form})
\begin{equation}
\|\Psi\|^2 =\left(\frac{i}{2}\right)^n (-1)^{\frac{n(n-1)}{2}}e^{-h}\frac{\Psi\wedge\ol{\Psi}}{\omega^n},
\end{equation}
for $(n,0)$-form $\Psi$.  It is easy to see that the associated connection $\nabla^h$
on $\mathbf{K}_{C(S)}$ is flat.  And we have the surjective homomorphism to the holonomy group $\pi_1(S)\rightarrow\Hol (\nabla^h)\subset \operatorname{U}(1)$,
whose image is a finite group $\Z_p$.  If $p=1$, then the singularity $o\in C(S)\cup\{o\}$ is \emph{Gorenstein}.
And if $p>1$, $o\in C(S)\cup\{o\}$ is \emph{$\Q$-Gorenstein}.

\begin{thm}\label{thm:poss-inf}
Let $(S,D,J)$ be a normal strictly pseudoconvex CR manifold with property S-E such that the singularity $C(S)\cup\{o\}$ is Gorenstein, e.g. $\pi_1(S)=e$.  Then $(S,D,J)$ is the CR infinity of a complete K\"{a}hler-Einstein manifold $M$ only if
$S=\mathbb{S}^{2n-1}$, with $(D,J)$ a transversal deformation of the standard CR structure and $M\subset\C^n$.
\end{thm}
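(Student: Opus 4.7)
The plan is the following. By Proposition \ref{prop:nor-st-pseudo}, $M$ sits as a domain in a resolution $\pi:X\to C(S)\cup\{o\}$ with exceptional fibre $E=\pi^{-1}(o)$, so by Corollary \ref{cor:Stein-dom} it will suffice to prove $E=\emptyset$. The Gorenstein hypothesis says that the holomorphic $(n,0)$-form $\Omega$ provided by property S-E extends to a nowhere-vanishing section of $\mathbf{K}_{C(S)\cup\{o\}}$; in particular $\pi^{*}\mathbf{K}_{C(S)\cup\{o\}}$ is holomorphically trivial on $X$, and the discrepancy formula
\[
\mathbf{K}_X=\pi^{*}\mathbf{K}_{C(S)\cup\{o\}}+\sum_{i} a_i E_i
\]
yields $\mathbf{K}_M\cong\mathcal{O}_M(\sum_i a_i E_i)$, where $\tilde{\Omega}:=\pi^{*}\Omega$ is a meromorphic section of $\mathbf{K}_M$ with divisor $\sum_i a_i E_i$.

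Next I would show every $a_i$ vanishes. By Theorem \ref{thm:K-E} the bundle $\mathbf{K}_M$ is positive, hence nef and in particular $\pi$-nef; since $\pi_{*}\mathbf{K}_M=0$, the Negativity Lemma of Koll\'{a}r--Mori forces $-\mathbf{K}_M$ to be effective, so $a_i\leq 0$ for every $i$. For the opposite inequality, property S-E with $o$ Gorenstein realizes $o$ as a canonical singularity---this is standard for apices of Calabi-Yau cones, and in the setting at hand can be read off from the positive Reeb weight $a>0$ of $\Omega$, which rules out pole contributions of $\tilde{\Omega}$ along any exceptional divisor on an equivariant resolution. Combining gives $a_i=0$, so $\mathbf{K}_M$ is holomorphically trivial and $\tilde{\Omega}$ is a global nowhere-vanishing holomorphic volume form on $M$.

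With $\tilde{\Omega}$ in hand, the final step is a maximum-principle argument. The K\"{a}hler-Einstein volume form $(\omega')^n$ induces the natural Hermitian metric $h_{\omega'}$ on $\mathbf{K}_M$ of curvature $(n+1)\omega'>0$, so
\[
f := -\log\|\tilde{\Omega}\|^{2}_{h_{\omega'}}
\]
is smooth on all of $M$ with $dd^c f=(n+1)\omega'>0$, hence strictly plurisubharmonic. If $E$ were non-empty then $E$ would be a compact analytic subvariety of $M$ of positive dimension; the maximum principle applied to $f|_E$ would force $f$ to be constant on each irreducible component, contradicting strict plurisubharmonicity. Therefore $E=\emptyset$, $M$ is a strictly pseudoconvex Stein domain, and Corollary \ref{cor:Stein-dom} finishes the argument.

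The main obstacle is the lower bound $a_i\geq 0$. The upper bound from the Negativity Lemma is essentially formal once $\mathbf{K}_M$ is $\pi$-nef, and the plurisubharmonic contradiction is immediate once $\tilde{\Omega}$ has no zeros or poles, but the canonicity of $o$ genuinely uses the Reeb scaling of $\Omega$ and the positivity of its weight. Packaging this step cleanly---ideally without invoking a full existence theorem for Ricci-flat cone metrics---is where the argument needs real care.
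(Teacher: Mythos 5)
Your overall architecture matches the paper's: reduce via Proposition~\ref{prop:nor-st-pseudo} and Corollary~\ref{cor:Stein-dom} to showing the exceptional set $E$ is empty, and derive that from the interplay between the holomorphic volume form $\Omega$ supplied by property S-E and the positivity of $\mathbf{K}_M$ coming from Theorem~\ref{thm:K-E}. Your endgame, however, is genuinely different. The paper never invokes the Negativity Lemma: instead it uses Corollary~\ref{cor:K-ample} (sections of $\mathbf{K}_M^q$ embed a neighborhood of $E$) and shows that for any $\sigma\in H^0(M,\mathcal{O}(\mathbf{K}_M^q))$ the quotient $\sigma/\Omega^q$ descends to a holomorphic function on $C(S)_{r<1}\setminus\{o\}$ and extends across $o$ by Riemann extension, so that $\sigma|_E$ is a constant multiple of $\Omega^q|_E$; such sections cannot embed a neighborhood of $E$, forcing $E=\emptyset$. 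Your route (Negativity Lemma to get $a_i\le 0$, canonicity to get $a_i\ge 0$, then a strictly plurisubharmonic potential for the trivialized $\mathbf{K}_M$ violating the maximum principle on a compact curve in $E$) is also valid, and arguably isolates the role of the discrepancies more transparently; the paper's version avoids the Negativity Lemma entirely at the cost of routing through the projective embedding of Corollary~\ref{cor:K-ample}. (Two small slips: $\pi_*\mathbf{K}_M$ is not zero --- what vanishes is the pushforward of the exceptional divisor $\sum a_iE_i$ --- and the Negativity Lemma makes $-\sum a_iE_i$ effective, not $-\mathbf{K}_M$.)

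The step you flag as the ``main obstacle,'' namely $a_i\ge 0$, is a real gap as you have written it, and your proposed justification via the positive Reeb weight of $\Omega$ on an equivariant resolution is not a proof; the weight of $\Omega$ under the $\C^*$-action does not by itself control the order of $\pi^*\Omega$ along an invariant exceptional divisor. The paper closes this step with the $L^2$ criterion of Laufer and Burns: equation~(\ref{eq:hol-form}) expresses $\Omega\wedge\ol{\Omega}$ as a bounded multiple of the cone volume form, so $\int_U\Omega\wedge\ol{\Omega}<\infty$ near the apex, hence $o$ is a rational singularity; being Gorenstein and rational it is canonical, and (again by Laufer) $\Omega$ extends holomorphically to any resolution, which is exactly $a_i\ge 0$. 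This requires no existence theorem for Ricci-flat cone metrics, only the pointwise identity~(\ref{eq:hol-form}) and finiteness of the cone volume near the vertex, so it is the clean packaging you were looking for. With that substitution your argument goes through.
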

\begin{remark}
One can show if $(S,D,J)$ has property S-E, then $\pi_1(S)$ must be finite.  This follows from the existence of a Sasaki structure with
positive Ricci curvature and an application of Meyer's theorem.  In most applications one will have $(S,D,J)$ simply connected.
\end{remark}
\begin{proof}
By Proposition~\ref{prop:nor-st-pseudo} $M$ is a resolution $\pi:M\rightarrow C(S)_{r<1}$.
We have a holomorphic n-form $\Omega$ on $C(S)$ satisfying (\ref{eq:hol-form}).  It is a result of~\cite{Lau} and~\cite{Bur}
that $o\in C(S)$ is a rational singularity if and only if it has a small neighborhood $U$ with
\begin{equation}\label{eq:rational}
\int_U \Omega\wedge\ol{\Omega} <\infty.
\end{equation}
And this easily follows from (\ref{eq:hol-form}).  Moreover, it is a consequence of (\ref{eq:rational}) that for any
resolution $\pi:M\rightarrow C(S)_{r<1}$ the form $\Omega$ extends to a holomorphic form on $M$ (cf.~\cite{Lau}).
We recall a definition.
\begin{defn}
Let $X$ be a normal $\Q$-Gorenstein variety.  Then $X$ has \emph{canonical singularities} if for every resolution
$\pi :\hat{X}\rightarrow X$ one has
\begin{equation}\label{eq:can-sing}
K_{\hat{X}} =\pi^* K_X + \sum_{i} a_i E_i,
\end{equation}
with $a_i \geq 0$ for each exceptional divisor $E_i$.  The equality in (\ref{eq:can-sing}) means linear equivalence.
\end{defn}
It is sufficient to check (\ref{eq:can-sing}) for one resolution, and we have each $a_i \geq 0$.
By Corollary~\ref{cor:K-ample} for some $q\geq 1$ $\mathbf{K}_M^q$ is ample in a neighborhood of the exceptional set $E$, and
we have $qK_M =q\sum_{i} a_i E_i$.  If $\sigma\in H^0(M,\mathcal{O}(\mathbf{K}_M^q))$, then $f=\frac{\sigma}{\Omega^q}$ is a
meromorphic function which is holomorphic on $C(S)_{r<1} \setminus\{o\}$.  By the Riemann extension theorem $f$ extends holomorphically
to $\tilde{f}$ on $C(S)_{r<1}$.  So $f=\pi^* \tilde{f}$, and $\sigma|_E$ is a constant multiple of $\Omega^q |_E$.
Therefore, we must have $E=\emptyset$, $M =C(S)_{r<1}$.  Since $M$ is smooth we must have $M\subset\C^n$ and
the rest follows as in the proof of Corollary~\ref{cor:Stein-dom}.
\end{proof}

\begin{remark}
Theorem~\ref{thm:poss-inf} is a global result.  If $(S,D,J)$ satisfies the assumptions of the theorem and in addition
is Sasaki-Einstein, then the Ansatz in Section~\ref{subsect:diag-quot}, due to E. Calabi, constructs an incomplete
K\"{a}hler-Einstein metric with CR infinity $(S,D,J)$.

Note also that the examples in \ref{subsect:Hirz-Jung} and \ref{subsect:diag-quot} show that Gorenstein assumption in the theorem
in necessary.
\end{remark}

\section{Examples}

We consider some cases in which Theorem~\ref{thm:K-E} is easily applicable.  The following easy result will be helpful
in some of the cases that follow.
\begin{prop}\label{prop:quasi-proj}
Suppose $X$ is a projective manifold and $W\subset X$ is a smooth divisor ($X$ may have orbifold singularities along $W$) with $\mathbf{K}_X \otimes [kW] >0$, for some $k\geq 1$, and $[W]|_W >0$.  Then $X\setminus W$ is 1-convex and
$\mathbf{K}_{X\setminus W} >0$.
\end{prop}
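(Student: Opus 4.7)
The plan is to prove the two conclusions separately, both following from natural constructions involving the defining section $\sigma\in H^0(X,\mathcal{O}([W]))$ of the divisor $W$. For the 1-convexity of $X\setminus W$, I would use the hypothesis $[W]|_W>0$ to construct a Hermitian metric $h$ on $[W]$ whose Chern curvature $\Theta_h$ is a positive $(1,1)$-form on $X$ throughout some tubular neighborhood $U$ of $W$. Starting with any metric $h_0$, the $\partial\ol{\partial}$-lemma on the projective manifold $W$ allows one to modify $h_0$ (by multiplying by $e^{-\tilde{\varphi}}$, where $\tilde{\varphi}\in C^{\infty}(X)$ extends a chosen function on $W$) so that $\Theta_h|_W$ represents a K\"ahler class in $W$-tangent directions. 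A further replacement $h\mapsto e^{-C|\sigma|_{h_0}^2}h$ for large $C$ then forces strict positivity in the normal direction as well, so $\Theta_h>0$ throughout $U$ by continuity. Now set $\psi:=-\log|\sigma|_h^2$ on $X\setminus W$; by the Poincar\'e--Lelong identity $dd^c\psi=\Theta_h$ there, so $\psi$ is strictly plurisubharmonic on $U\setminus W$, tends to $+\infty$ at $W$, and has relatively compact sublevel sets in $X\setminus W$ by compactness of $X$. Choosing $c$ large enough that $\{\psi\ge c\}\subset U$ produces an exhaustion function that is strictly plurisubharmonic outside the compact set $\{\psi\le c\}$, which is precisely the Grauert-type characterization of 1-convexity.

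For the positivity of $\mathbf{K}_{X\setminus W}$, I would exploit the hypothesis $\mathbf{K}_X\otimes [kW]>0$ directly via a trivialization argument. Fix a Hermitian metric $H$ on $\mathbf{K}_X\otimes [kW]$ with $\Theta_H>0$ on all of $X$. Off $W$ the section $\sigma^{\otimes k}$ of $[kW]$ is nowhere zero, so it induces a canonical holomorphic isomorphism
\[
\tau:(\mathbf{K}_X\otimes[kW])|_{X\setminus W}\xrightarrow{\sim}\mathbf{K}_{X\setminus W},\qquad e\otimes\sigma^{\otimes k}\mapsto e,
\]
and I would define the induced metric $H'$ on $\mathbf{K}_{X\setminus W}$ by $|e|_{H'}^2:=|e\otimes\sigma^{\otimes k}|_H^2$. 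Writing $\sigma=f\cdot\zeta$ in a local frame $\zeta$ of $[W]$, the local expression for $H'$ differs from that of $H$ by the pluriharmonic factor $|f|^{2k}$ on $X\setminus W$, so $\Theta_{H'}=\Theta_H|_{X\setminus W}>0$, proving $\mathbf{K}_{X\setminus W}>0$.

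The only step that is not essentially formal is the normal-direction correction in the construction of $h$: the $\partial\ol{\partial}$-lemma on $W$ controls only the $W$-tangent component of $\Theta_h|_W$, while strict positivity of the $(N,\ol{N})$-component must be arranged separately, which is exactly what the perturbation $h\mapsto e^{-C|\sigma|^2}h$ achieves for $C\gg 0$. Everything else is a combination of Poincar\'e--Lelong and straightforward line-bundle bookkeeping. The orbifold singularities along $W$ permitted in the hypothesis are handled by the orbifold analogues of these constructions, consistent with the paper's earlier treatment of $\Q$-Cartier divisors and K\"ahler orbifolds.
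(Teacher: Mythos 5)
The paper states this proposition without proof (it is introduced only as ``the following easy result''), so there is no argument of the author's to compare against; your proposal supplies a proof, and it is correct. Both halves are the standard constructions: for 1-convexity, you arrange a metric $h$ on $[W]$ whose curvature is positive on a neighborhood $U$ of $W$ by fixing the tangential directions using $[W]|_W>0$ and then adding $C\sqrt{-1}\partial\ol{\partial}|\sigma|^2$, which at points of $W$ equals $e^{-\phi}\sqrt{-1}\,\partial f\wedge\ol{\partial}\ol{f}\geq 0$ and is strictly positive transverse to $W=\{f=0\}$, so that $-\log|\sigma|_h^2$ becomes a proper exhaustion of $X\setminus W$, strictly plurisubharmonic outside a compact set; for the canonical bundle, the trivialization by $\sigma^{\otimes k}$ transfers the positively curved metric on $\mathbf{K}_X\otimes[kW]$ to $\mathbf{K}_{X\setminus W}$ with unchanged curvature since $\log|f|^{2k}$ is pluriharmonic off $W$. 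This is exactly the technique the author uses elsewhere in the paper (the metric $\tilde{h}=e^{-\pi^*h|z|^2}h$ in the proof that a normal K\"ahler-Einstein manifold is quasi-projective performs the same normal-direction correction), so your route is the intended one. Two minor remarks: the appeal to the $\partial\ol{\partial}$-lemma on $W$ is unnecessary, since any two Hermitian metrics on $[W]|_W$ already differ by $e^{\varphi}$ with $\varphi\in C^\infty(W)$, which is all the tangential adjustment requires; and the orbifold case is asserted rather than argued, but since $[W]$ is then only $\Q$-Cartier one should phrase the construction in terms of a power $[mW]$ that is an honest line bundle (or orbifold metrics), which changes nothing of substance and is consistent with the paper's own level of detail.
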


\subsection{Negative bundles}

Let $\pi:\mathbf{E}\rightarrow N$ be an holomorphic bundle with a Hermitian metric $h$.  Recall, that $\mathbf{E}$ has a unique
Chern connection $\nabla$ which is compatible with $h$ and $\nabla^{0,1} =\ol{\partial}$.  In a holomorphic local frame
$(e_1,\ldots,e_r)$ the connection form is $\theta =\partial hh^{-1}$.  We have the curvature
$\Theta\in\Omega^{1,1}(\Hom(\mathbf{E},\mathbf{E})$ given by
\begin{equation}
\Theta_i^j =d\theta_i^j +\theta_k^j\wedge\theta_i^k.
\end{equation}

\begin{defn}\label{defn:pos}
A connection on $\mathbf{E}$ has \emph{positive (resp. negative) curvature} if for each $x\in N$ and for all nonzero
$v\in\mathbf{E}_x$  $\sqrt{-1} h(\Theta_x v,v)$ is a positive (resp. negative) $(1,1)$-form.

A holomorphic bundle $\mathbf{E}$ is \emph{positive (resp. negative)} if it admits a metric whose Chern connection
has positive (resp. negative) curvature.
\end{defn}

This condition was called weakly positive by P. Griffiths~\cite{Grif}, as it is not strong enough to ensure the properties of a
positive line bundle such as Kodaira vanishing.  Although, a weaker condition than in~\ref{defn:pos} was called weakly positive
by H. Grauert~\cite{Grau}.

Define a smooth function $r^2 :=h(v,v)$ on the total space of $\mathbf{E}$.
\begin{prop}\label{prop:neg-pseudo}
Suppose the metric $h$ has negative curvature.  Then the disk bundles $\{r^2 <c\}$, for $c>0$, are strictly pseudoconvex.
In fact, $dd^c r^2$ is a positive $(1,1)$-form outside the zero section.
\end{prop}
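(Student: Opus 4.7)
The plan is to compute $dd^c r^2$ directly in local coordinates and show that, at each point outside the zero section, it is a positive $(1,1)$-form. Strict pseudoconvexity of the disk bundles then follows since $r^2 - c$ is a defining function for $\{r^2 < c\}$ whose differential is nonzero on $\{r^2 = c\}$ (as $r^2$ is strictly plurisubharmonic there) and whose Levi form agrees with $dd^c r^2$.

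Fix $p \in N$ and choose a holomorphic frame $(e_1, \ldots, e_r)$ in a neighborhood of $p$ which is normal at $p$ in the Hermitian sense, i.e.\ $h_{i\bar\jmath}(p) = \delta_{ij}$ and $\partial_\alpha h_{i\bar\jmath}(p) = 0$ for all $\alpha, i, j$. With fiber coordinates $\zeta^i$ relative to this frame, we have $r^2 = h_{i\bar\jmath}(z)\zeta^i \bar\zeta^j$ on the total space. Since $dd^c = \sqrt{-1}\,\partial\bar\partial$, applying $\partial\bar\partial$ to $r^2$ yields four terms:
\begin{equation}
\partial\bar\partial r^2 = h_{i\bar\jmath,\alpha\bar\beta}\,\zeta^i\bar\zeta^j\, dz^\alpha\wedge d\bar z^\beta + h_{i\bar\jmath,\bar\beta}\bar\zeta^j\, d\zeta^i\wedge d\bar z^\beta + h_{i\bar\jmath,\alpha}\zeta^i\, dz^\alpha \wedge d\bar\zeta^j + h_{i\bar\jmath}\, d\zeta^i \wedge d\bar\zeta^j.
\end{equation}
At a point $(p,\zeta_0)$ in the total space, the two middle ``mixed'' terms vanish by the normal-frame assumption, and we are left with
\begin{equation}
dd^c r^2\big|_{(p,\zeta_0)} = \sqrt{-1}\,h_{i\bar\jmath,\alpha\bar\beta}(p)\,\zeta_0^i\bar\zeta_0^j\, dz^\alpha\wedge d\bar z^\beta + \sqrt{-1}\,\delta_{ij}\, d\zeta^i\wedge d\bar\zeta^j.
\end{equation}

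The second summand is the standard positive $(1,1)$-form on the fiber. For the first, recall that in the normal frame the Chern curvature at $p$ is $\Theta^k_i = -h_{i\bar k,\alpha\bar\beta}\,dz^\alpha\wedge d\bar z^\beta$. The negativity hypothesis, $\sqrt{-1}\,h(\Theta v, v) < 0$ for every nonzero $v \in \mathbf{E}_p$, translates precisely into the statement that the Hermitian form $(W^\alpha,\bar W^\beta)\mapsto h_{i\bar\jmath,\alpha\bar\beta}(p)\,\zeta_0^i\bar\zeta_0^j\, W^\alpha\bar W^\beta$ on $T_pN$ is positive definite for every $\zeta_0\neq 0$. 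Hence the first summand is a positive $(1,1)$-form on the horizontal subspace.

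Since in the normal frame the Chern connection coefficients vanish at $p$, the horizontal/vertical splitting at $(p,\zeta_0)$ is given by $\{\partial_{z^\alpha}\} \oplus \{\partial_{\zeta^i}\}$, and the expression above has no cross terms between the two; the sum of the two positive forms is therefore positive definite on the full tangent space $T_{(p,\zeta_0)}\mathbf{E}$ whenever $\zeta_0 \neq 0$. Since $p$ was arbitrary (a normal frame exists about any point of $N$), this proves $dd^c r^2 > 0$ away from the zero section. Finally, on $\{r^2 = c\}$, strict plurisubharmonicity of $r^2$ forces $dr^2 \neq 0$, so $r^2 - c$ is a strictly plurisubharmonic defining function and $\{r^2 < c\}$ is strictly pseudoconvex. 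The main (modest) technical point is merely keeping the curvature sign convention consistent; once the normal frame is used to eliminate first derivatives at $p$, the decomposition into ``curvature term plus fiber term'' is immediate.
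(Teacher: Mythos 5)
Your proof is essentially identical to the paper's: choose a local holomorphic frame in which the connection form $\theta=\partial h\,h^{-1}$ (equivalently $\partial h$) vanishes at the base point, expand $\partial\ol{\partial}r^2$ into four terms, observe that the two mixed terms vanish there, and identify the remaining two with the curvature term (positive by the negativity hypothesis) and the fiberwise metric term. The only quibble is your justification that $dr^2\neq 0$ on $\{r^2=c\}$: strict plurisubharmonicity alone does not rule out critical points (a strictly plurisubharmonic function can have a saddle), but the conclusion is immediate anyway since $r^2$ is fiberwise a positive definite quadratic form, so its radial derivative at any nonzero point of the fiber equals $2r^2=2c>0$.
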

\begin{proof}
Choose a local holomorphic frame $(e_1,\ldots,e_r)$, with fiber coordinates $(w_1,\ldots, w_r)$, so that
$\theta=\partial h h^{-1} =0$ at $x\in N$. Then at $x\in N$ we have
\begin{equation}\label{eq:neg-cur}
\Theta_x =d\theta =\ol{\partial}(\partial h h^{-1})=(\ol{\partial}\partial h)h^{-1}.
\end{equation}
While
\begin{equation}\label{eq:psh-r}
\partial\ol{\partial} r^2 =\partial\ol{\partial}h_{i\ol{\jmath}}w_i \ol{w}_{\ol{\jmath}} +dw_i \wedge\ol{\partial}h_{i\ol{\jmath}}\ol{w}_{\ol{\jmath}} +\partial h_{i\ol{\jmath}}w_i \wedge d\ol{w}_{\ol{\jmath}} +h_{i\ol{\jmath}}dw_i \wedge d\ol{w}_{\ol{\jmath}}.
\end{equation}
The two middle terms on the right of (\ref{eq:psh-r}) vanish.  Thus (\ref{eq:neg-cur}) and (\ref{eq:psh-r}) show that at
$(w_r,\ldots, w_r)\in\mathbf{E}_x$ we have $\partial\ol{\partial}r^2 >0$.
\end{proof}

If $\mathbf{E}$ is an arbitrary holomorphic vector bundle and $\mathbf{L}$ is a negative line bundle on $N$,
then the curvature of $\mathbf{E}\otimes\mathbf{L}^\mu$ is
\begin{equation}
\Theta_{\mathbf{E}\otimes\mathbf{L}^\mu} =\Theta_{\mathbf{E}} +\mu\Theta_{\mathbf{L}}.
\end{equation}
Thus for sufficiently large $\mu\gg 0$ $\mathbf{E}\otimes\mathbf{L}^\mu$ is negative.

Associated to a vector bundle $\pi:\mathbf{E}\rightarrow N$ is the bundle of projective spaces
$\tilde{\pi}:\mathbb{P}(\mathbf{E})\rightarrow N$ with fibers $\tilde{\pi}^{-1}(x)=\mathbb{P}(\mathbf{E}_x)$.
Let $\rho:\mathbf{L}\rightarrow\mathbb{P}(\mathbf{E})$ be the universal bundle of lines in $\mathbf{E}$.
The Hermitian metric $h$ on $\mathbf{E}$ defines a natural metric on $\mathbf{L}$.  We will compute the
curvature $\Theta_{\mathbf{L}}$ of this metric on $\mathbf{L}$ in terms of the curvature $\Theta_{\mathbf{E}}$
of $\mathbf{E}$.  This was proved in~\cite{Grif}.  We prove it here as it is important to what follows.

Let $(e_1,\ldots,e_r)$ be a local holomorphic frame of $\mathbf{E}$, and let $\xi=(\xi_1,\ldots,\xi_r)\in\C^r \setminus\{0\}$ be fiber coordinates.  Then denote
$\langle\xi,\xi\rangle:=\sum_{i,j}\xi_i h_{i\ol{\jmath}}\ol{\xi}_{\ol{\jmath}}$.  We have
\begin{equation}\label{eq:curv-her}
\begin{split}
\Theta_{\mathbf{L}}=\ol{\partial}\partial\log\langle\xi,\xi\rangle
  & =\frac{\partial\xi_i h_{i\ol{\jmath}}\ol{\partial}\ol{\xi}_{\ol{\jmath}}-\partial\xi_i \ol{\partial}h_{i\ol{\jmath}}\ol{\xi}_{\ol{\jmath}}-\xi_i \partial h_{i\ol{\jmath}}\ol{\partial}\ol{\xi}_{\ol{\jmath}}+\xi_i \ol{\partial}\partial h_{i\ol{\jmath}}\ol{\xi}_{\ol{\jmath}}}{\langle\xi,\xi\rangle} \\
                    & +\frac{(\partial\xi_i h_{i\ol{\jmath}}\ol{\xi}_{\ol{\jmath}}+\xi_i\partial h_{i\ol{\jmath}}\ol{\xi}_{\ol{\jmath}})\wedge(\xi_i h_{i\ol{\jmath}}\ol{\partial}\ol{\xi}_{\ol{\jmath}}+\xi_i\ol{\partial} h_{i\ol{\jmath}}\ol{\xi}_{\ol{\jmath}} )}{\langle\xi,\xi\rangle^2}.\\
\end{split}
\end{equation}
Rearranging terms in (\ref{eq:curv-her}) we get
\begin{equation}
\Theta_{\mathbf{L}}=\frac{\langle\Theta_{\mathbf{E}}\xi,\xi\rangle}{\langle\xi,\xi\rangle} -P(x,\xi)+Q(x,\xi),
\end{equation}
where
\begin{equation}
P(x,\xi) =\frac{\langle\partial\xi,\partial\xi\rangle}{\langle\xi,\xi\rangle} -\frac{\langle\partial\xi,\xi\rangle\wedge\langle\xi,\partial\xi\rangle}{\langle\xi,\xi\rangle^2}
\end{equation}
is the Fubini-Study metric on the fibers and
\begin{equation}
Q(x,\xi) =\frac{-\langle\theta\xi,\theta\xi\rangle-2\sqrt{-1}\im\langle\partial\xi,\theta\xi\rangle}{\langle\xi,\xi\rangle}
+\frac{2\sqrt{-1}\im(\langle\partial\xi,\xi\rangle\langle\xi,\theta\xi\rangle)+\langle\theta\xi,\xi\rangle\wedge\overline{\langle\theta\xi,\xi\rangle}}{\langle\xi,\xi\rangle^2}.
\end{equation}
One can choose a frame $(e_1,\ldots,e_r)$ so that the connection form $\theta$ vanishes at $x_0 \in N$.
Then $Q(x_0 ,\xi)=0$, so we have the following.
\begin{prop}\label{prop:neg-taut}
If $\mathbf{E}$ is a negative vector bundle, then $\rho:\mathbf{L}\rightarrow\mathbb{P}(\mathbf{E})$ is negative.
\end{prop}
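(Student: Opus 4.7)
The curvature formula for $\Theta_{\mathbf{L}}$ derived in the excerpt already does most of the work, so the plan is essentially to assemble the pieces pointwise and track where each summand is strictly negative. Since negativity of a $(1,1)$-form is a pointwise condition, it suffices to fix an arbitrary $[\xi_0]\in\mathbb{P}(\mathbf{E})$ over $x_0\in N$ and verify $\sqrt{-1}\Theta_{\mathbf{L}}<0$ at $[\xi_0]$ for the induced metric $h_{\mathbf{L}}=\langle\xi,\xi\rangle$.

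First, I would invoke the standard fact that one can always choose a local holomorphic frame $(e_1,\ldots,e_r)$ of $\mathbf{E}$ near $x_0$ in which the Chern connection form $\theta=\partial h\,h^{-1}$ vanishes at $x_0$ (a linear change of frame suffices). In such a frame the expression for $Q(x,\xi)$ in the excerpt evaluates to zero at $x=x_0$ for every $\xi$ in the fiber, so the formula collapses to
\begin{equation*}
\Theta_{\mathbf{L}}\big|_{[\xi_0]}
=\frac{\langle\Theta_{\mathbf{E}}\xi_0,\xi_0\rangle}{\langle\xi_0,\xi_0\rangle}
-P(x_0,\xi_0).
\end{equation*}

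Next, I would analyze the two remaining terms with respect to the decomposition of $T_{[\xi_0]}\mathbb{P}(\mathbf{E})$ into a horizontal subspace (isomorphic to $T_{x_0}N$ via $\tilde{\pi}_*$) and the vertical subspace $T_{[\xi_0]}\mathbb{P}(\mathbf{E}_{x_0})$; both are $J$-invariant. The first summand is the pullback via $\tilde{\pi}$ of the $(1,1)$-form $\tfrac{\langle\Theta_{\mathbf{E}}\xi_0,\xi_0\rangle}{\langle\xi_0,\xi_0\rangle}$ on $N$, which by negativity of $\mathbf{E}$ satisfies $\sqrt{-1}\,\tfrac{\langle\Theta_{\mathbf{E}}\xi_0,\xi_0\rangle}{\langle\xi_0,\xi_0\rangle}<0$ on $T_{x_0}N$; hence its pullback is strictly negative on horizontal vectors and vanishes on vertical ones. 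The second summand $P(x_0,\xi_0)$ is the Fubini--Study metric of the fiber $\mathbb{P}(\mathbf{E}_{x_0})$ at $[\xi_0]$, so $-P$ is strictly negative on vertical vectors and vanishes on horizontal ones. Since the form is $(1,1)$, for any $v=v_H+v_V$ the mixed contributions drop out, giving
\begin{equation*}
\sqrt{-1}\,\Theta_{\mathbf{L}}(v,Jv)
= \sqrt{-1}\,\tfrac{\langle\Theta_{\mathbf{E}}\xi_0,\xi_0\rangle}{\langle\xi_0,\xi_0\rangle}(v_H,Jv_H)
- \sqrt{-1}\,P(x_0,\xi_0)(v_V,Jv_V),
\end{equation*}
which is a sum of two non-positive terms, each strictly negative precisely when its argument is nonzero. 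Hence $\sqrt{-1}\,\Theta_{\mathbf{L}}(v,Jv)<0$ for every $0\neq v\in T_{[\xi_0]}\mathbb{P}(\mathbf{E})$, and since $[\xi_0]$ was arbitrary, $\mathbf{L}$ is negative.

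There is no real obstacle: the computation is already in hand, and the only subtlety is observing that the horizontal--vertical decomposition is respected by $J$ and that the cross terms of a $(1,1)$-form vanish, so the two negative semi-definite contributions add to a strictly negative form by covering complementary directions.
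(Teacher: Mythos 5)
Your argument is correct and is essentially the paper's: the paper likewise chooses a holomorphic frame with $\theta(x_0)=0$ so that $Q(x_0,\xi)=0$, leaving $\Theta_{\mathbf{L}}=\frac{\langle\Theta_{\mathbf{E}}\xi,\xi\rangle}{\langle\xi,\xi\rangle}-P$, with the first term negative on horizontal vectors by Griffiths negativity of $\mathbf{E}$ and $-P$ negative on vertical vectors. The only quibble is that the cross terms vanish not ``because the form is $(1,1)$'' but because the first summand is pulled back from $N$ (so kills vertical vectors) while $P$ involves only the fiber differentials $d\xi_i$ (so kills horizontal vectors at $x_0$); you in fact state these supports correctly, so the proof stands.
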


Let $\pi:\mathbf{E}\rightarrow N$ be a rank $r$ bundle with associated projective bundle
$\tilde{\pi}:\mathbb{P}(\mathbf{E})\rightarrow N$ and tautological line bundle $\rho:\mathbf{L}\rightarrow\mathbb{P}(\mathbf{E})$, then the canonical bundle of $\mathbb{P}(\mathbf{E})$ is given by
\begin{equation}\label{eq:can-bund}
\mathbf{K}_{\mathbb{P}(\mathbf{E})}=\tilde{\pi}^*\mathbf{K}_N \otimes\tilde{\pi}^* \det(\mathbf{E})^{-1} \otimes\mathbf{L}^r.
\end{equation}

Consider the compactification $X=\mathbb{P}(\mathbf{E}\oplus\C)$ of $\mathbf{E}$.  So we have
\begin{equation}\label{eq:can-bund2}
\mathbf{K}_X =\tilde{\pi}^* \mathbf{K}_N \otimes\tilde{\pi}^* \det(\mathbf{E})^{-1} \otimes\mathbf{L}^{r+1}.
\end{equation}
Let $D_\infty \subset X$ be the divisor at infinity, that is
$D_\infty =\{[v:0]\in \mathbb{P}(\mathbf{E}\oplus\C)\}=\mathbb{P}(\mathbf{E})$.
Clearly, $[D_\infty]$ and $\mathbf{L}^{-1}$ restrict to the hyperplane bundle on each fiber $\tilde{\pi}^{-1}(x)=\mathbb{P}(\mathbf{E}\oplus\C)_x$.  Thus $[D_\infty]\otimes\mathbf{L} =\tilde{\pi}^*(\mathbf{F})$ for
a line bundle $\mathbf{F}$ on $N$.  One can check that the normal bundle
$\mathcal{N}_{D_\infty} \cong [D_\infty]|_{D_\infty} \cong\mathbf{L}^{-1}|_{D_\infty}$, thus $\tilde{\pi}^*(\mathbf{F})|_{D_\infty}$
is trivial.  The projection $D_\infty =\mathbb{P}(\mathbf{E})\rightarrow N$ induces an injection on the Picard group,
therefore $[D_\infty]=\mathbf{L}^{-1}$.

In particular, suppose $\pi:\mathbf{E}\rightarrow N$ is a negative bundle.  Further, suppose that if $M'$ denotes the
total space of $\mathbf{E}$, $c_1(M')<0$.  This can be seen to be equivalent to $c_1(N)+c_1(\mathbf{E})<0$.
For if $-\varpi \in c_1(N)+c_1(\mathbf{E})$ with $-\varpi$ negative, then
$-\pi^*\varpi -\sqrt{-1}\partial\ol{\partial}r^2$ is a negative form in $c_1(M')$ where $r^2 =h(v,v)$.
By (\ref{eq:can-bund2}), Proposition~\ref{prop:neg-taut}, and the above comments we have the following.
\begin{prop}
Let $\pi:\mathbf{E}\rightarrow N$ be a negative bundle of rank r with $c_1(N)+c_1(\mathbf{E})<0$.  The canonical bundle
of $X=\mathbb{P}(\mathbf{E}\oplus\C)$ satisfies
\begin{equation}
\mathbf{K}_X \otimes [kD_\infty]>0,\quad\text{for } k>r+1.
\end{equation}
\end{prop}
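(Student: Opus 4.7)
The plan is to exhibit an explicit Hermitian metric on $\mathbf{K}_X\otimes[kD_\infty]$ with strictly positive Chern curvature, built by tensoring a pullback metric from $N$ with a positive power of the dual tautological metric on $X$. First I would use formula (\ref{eq:can-bund2}) together with the identification $[D_\infty]=\mathbf{L}^{-1}$ to rewrite
\begin{equation*}
\mathbf{K}_X\otimes[kD_\infty]=\tilde\pi^*\bigl(\mathbf{K}_N\otimes\det(\mathbf{E})^{-1}\bigr)\otimes\mathbf{L}^{r+1-k},
\end{equation*}
so that when $k>r+1$ the second factor is $\mathbf{L}^{-m}$ with $m:=k-r-1\geq 1$. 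The hypothesis $c_1(N)+c_1(\mathbf{E})<0$ supplies a Hermitian metric $h_0$ on $\mathbf{K}_N\otimes\det(\mathbf{E})^{-1}$ whose Chern curvature $\omega_N$ is a strictly positive $(1,1)$-form on $N$; its pullback $\tilde\pi^*\omega_N$ is then a semi-positive $(1,1)$-form on $X$, strictly positive on horizontal vectors for $\tilde\pi$ and zero on vertical (fiber) vectors.

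Next I would equip $\mathbf{E}\oplus\C$ with the orthogonal direct sum of the negative metric $h$ on $\mathbf{E}$ and the trivial metric on $\C$, and endow $\mathbf{L}\to X$ with the induced metric. Specializing the curvature computation preceding Proposition~\ref{prop:neg-taut} to $\xi=(v,\lambda)\in\mathbf{E}\oplus\C$, and choosing at a given point a local frame of $\mathbf{E}$ with $\theta=\partial h\,h^{-1}=0$ so that the term $Q$ vanishes, one obtains
\begin{equation*}
\sqrt{-1}\,\Theta_{\mathbf{L}}=\sqrt{-1}\,\frac{\langle\Theta_{\mathbf{E}}v,v\rangle}{h(v,v)+|\lambda|^2}-\sqrt{-1}\,P(x,\xi).
\end{equation*}
Since $\mathbf{E}$ is negative, $-\sqrt{-1}\,\Theta_{\mathbf{L}}$ is strictly positive on vertical vectors everywhere and strictly positive on horizontal vectors off the zero section $D_0=\{v=0\}\subset X$; on $D_0$ the horizontal component of $-\sqrt{-1}\,\Theta_{\mathbf{L}}$ drops out.

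The main obstacle is checking strict positivity of
\begin{equation*}
\sqrt{-1}\,\Theta_{\mathbf{K}_X\otimes[kD_\infty]}=\tilde\pi^*\omega_N+m\bigl(-\sqrt{-1}\,\Theta_{\mathbf{L}}\bigr)
\end{equation*}
along $D_0$, precisely where each summand is separately degenerate. At any $p\in D_0$ I would pick the frame on $\mathbf{E}$ at $\tilde\pi(p)$ making $Q=0$, so that both summands split as the sum of a purely horizontal and a purely vertical $(1,1)$-form in the decomposition $T_pX=H_p\oplus V_p$. The pullback $\tilde\pi^*\omega_N$ is strictly positive on $H_p$ and zero on $V_p$; at $p$ the term $-m\sqrt{-1}\,\Theta_{\mathbf{L}}$ reduces to $m\sqrt{-1}\,P$, which is zero on $H_p$ and strictly positive on $V_p$. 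The two summands are therefore block-diagonal on complementary blocks and each strictly positive on its own block, so their sum is strictly positive at $p$. Off $D_0$ the term $-m\sqrt{-1}\,\Theta_{\mathbf{L}}$ is already strictly positive on all of $T_pX$, so adding the semi-positive $\tilde\pi^*\omega_N$ preserves strict positivity there as well, completing the construction.
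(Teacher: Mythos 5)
Your proof is correct and follows the same route the paper intends: rewrite $\mathbf{K}_X\otimes[kD_\infty]$ via (\ref{eq:can-bund2}) and the identification $[D_\infty]=\mathbf{L}^{-1}$ as $\tilde{\pi}^*\bigl(\mathbf{K}_N\otimes\det(\mathbf{E})^{-1}\bigr)\otimes\mathbf{L}^{-(k-r-1)}$, then combine the Griffiths curvature formula for $\mathbf{L}$ with a positively curved metric pulled back from $N$. You are in fact more careful than the paper, which just cites Proposition~\ref{prop:neg-taut}: since $\mathbf{E}\oplus\C$ is not a negative bundle, $-\sqrt{-1}\,\Theta_{\mathbf{L}}$ degenerates in the horizontal directions along the zero section $D_0$, and your block-diagonal verification that $\tilde{\pi}^*\omega_N$ supplies the missing horizontal positivity there is precisely the point that needs checking.
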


Proposition~\ref{prop:quasi-proj} then gives the following existence result for Einstein metrics on negative bundles.
\begin{cor}\label{cor:neg-bund}
Let $M'$ be the total space of a negative holomorphic bundle $\pi:\mathbf{E}\rightarrow N$ such that $c_1 (M')<0$, equivalently
$c_1(N) +c_1(\mathbf{E})<0$, then the strictly pseudoconvex tubular neighborhoods $M_c =\{v\in\mathbf{E}: r^2=h(v,v)<c\}$ of
the zero section admit unique complete K\"{a}hler-Einstein metrics, with normal CR infinity the sphere bundle
$S_c \subset\mathbf{E}$.
\end{cor}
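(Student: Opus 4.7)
The plan is to apply Theorem~\ref{thm:K-E} to each $M_c$, which reduces the task to checking that $M_c$ is a strictly pseudoconvex finite manifold with positive canonical bundle. For uniqueness I would invoke Proposition~\ref{prop:unique}, while normality of the CR boundary will come from the natural $S^1$-action on the fibers of $\mathbf{E}$.

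First, by Proposition~\ref{prop:neg-pseudo}, the function $r^2=h(v,v)$ is strictly plurisubharmonic off the zero section, so $M_c=\{r^2<c\}$ is a relatively compact strictly pseudoconvex domain in $M'$; hence $(M_c,M')$ is a strictly pseudoconvex finite manifold in the sense of the earlier definition.

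The key step, and the only nontrivial content, is positivity of $\mathbf{K}_{M_c}$. I would compactify: set $X=\mathbb{P}(\mathbf{E}\oplus\C)$ with divisor at infinity $D_\infty=\mathbb{P}(\mathbf{E})$, so that $M'=X\setminus D_\infty$. The proposition immediately preceding the corollary gives $\mathbf{K}_X\otimes[kD_\infty]>0$ for $k>r+1$. Combining the identification $[D_\infty]|_{D_\infty}=\mathbf{L}^{-1}|_{D_\infty}$ with Proposition~\ref{prop:neg-taut} (negativity of $\mathbf{L}$) shows $[D_\infty]|_{D_\infty}>0$. Proposition~\ref{prop:quasi-proj} then yields $\mathbf{K}_{M'}>0$ on the 1-convex manifold $M'=X\setminus D_\infty$, and restricting the Hermitian metric with positive curvature to $M_c$ gives $\mathbf{K}_{M_c}>0$. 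Theorem~\ref{thm:K-E} now delivers the complete K\"ahler-Einstein metric of negative scalar curvature on $M_c$, and Proposition~\ref{prop:unique} supplies uniqueness.

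Finally, to see that the CR infinity $S_c=\partial M_c$ is normal, I would use the scalar fiber action $v\mapsto e^{i\theta}v$ on $\mathbf{E}$. This action is holomorphic and preserves the Hermitian metric $h$, so it preserves both $S_c$ and the ambient complex structure, hence the induced CR structure $(D,J)$ on $S_c$. A brief computation in a local holomorphic frame shows that its infinitesimal generator $V$ satisfies $JV=-r\partial_r$, which is transverse to $S_c$; consequently $V\in TS_c$, $V\notin D$, and $TS_c=D\oplus\R V$. Thus $V$ is a transverse CR-preserving vector field, so $(D,J)$ is normal, as required. I do not anticipate any serious obstacle: the line-bundle positivity input is already assembled in the preceding proposition, and the remaining observations are essentially bookkeeping.
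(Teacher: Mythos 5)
Your proposal is correct and follows essentially the same route as the paper: the paper derives the corollary by assembling Proposition~\ref{prop:neg-pseudo}, the positivity of $\mathbf{K}_X\otimes[kD_\infty]$ on $X=\mathbb{P}(\mathbf{E}\oplus\C)$, and Proposition~\ref{prop:quasi-proj}, exactly as you do, with uniqueness from Proposition~\ref{prop:unique}. Your explicit verification that the fiberwise $S^1$-action makes the CR structure on $S_c$ normal is a point the paper asserts without proof, and your argument for it is the right one.
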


\subsection{Resolutions weighted homogeneous hypersurfaces}

One can easily construct examples by taking weighted blow-ups of simple weighted homogeneous hypersurface singularities.

A polynomial $f\in\C[z_0\ldots,z_n]$ is \emph{weighted homogeneous} with weights $\mathbf{w}=(w_0,\ldots,w_n)\in\Z_+^{n+1}$ and
degree $d$ if
\begin{equation}
f(u^{w_0} z_0,\ldots,u^{w_n} z_n) =u^d f(z_0,\ldots,z_n),\quad u,z_0,\ldots,z_n \in\C.
\end{equation}
Here we assume that $\gcd(w_0,\ldots,w_n)=1$.  We assume that $X=\{z\in\C^{n+1} : f(z)=0\}$ is smooth away from
$o\in\C^n$.  Then it is well known, see~\cite{BoyGalKol}, that $S=X\cap\mathbb{S}^{2n+1}$ has a natural Sasaki structure with Reeb vector field generating the action  $(z_0,\ldots,z_n)\rightarrow (u^{w_0} z_0,\ldots,u^{w_n} z_n)$ and the CR structure of $S$ satisfies property S-E
precisely when $|\mathbf{w}|=\sum w_i >d$, \textit{loc. cit.}.
The codimension of the singular set of $X$ is $\geq 2$, so $X$ is normal.
And $X$ is easily seen to be Gorenstein with holomorphic form given on $X\setminus\{o\}$ by adjunction by
\begin{equation}
\Omega =\frac{(-1)^{i+1}}{\partial f/\partial z_i} dz_0 \wedge\cdots\wedge\hat{dz_i}\wedge\cdots\wedge dz_n,
\end{equation}
where $\frac{\partial f}{\partial z_i} \neq 0$ for $i=1,\ldots, n$.
Therefore we have the following.
\begin{prop}
A weighted homogeneous hypersurface $X=\{z\in\C^{n+1} : f(z)=0\}$ with an isolated singularity has a resolution
$\hat{X}$ with $\mathbf{K}_{\hat{X}} >0$ only if $\sum w_i \leq d=\deg(f)$.
\end{prop}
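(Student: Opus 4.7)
The plan is to argue by contraposition: assume $|\mathbf{w}|=\sum w_i>d$ together with the existence of a resolution $\pi\colon\hat{X}\to X$ satisfying $\mathbf{K}_{\hat{X}}>0$, and derive a contradiction through Theorem~\ref{thm:poss-inf}. The idea is to cut a relatively compact strictly pseudoconvex tubular neighborhood $M\subset\hat{X}$ of the exceptional fiber $E=\pi^{-1}(o)$ whose CR infinity is the Sasakian link of the singularity, use Theorem~\ref{thm:K-E} to obtain a complete K\"{a}hler--Einstein metric on $M$, and then let Theorem~\ref{thm:poss-inf} force $E$ to be empty, which contradicts the assumption that $o$ is a genuine isolated singular point of $X$.

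Both hypotheses of Theorem~\ref{thm:poss-inf} are in hand. Property S-E for the link $S=X\cap\mathbb{S}^{2n+1}$ holds precisely because $|\mathbf{w}|>d$, as recalled just before the proposition. The Gorenstein condition is automatic: the displayed adjunction form $\Omega=(-1)^{i+1}(\partial f/\partial z_i)^{-1}\,dz_0\wedge\cdots\wedge\widehat{dz_i}\wedge\cdots\wedge dz_n$ is a nowhere vanishing holomorphic $(n,0)$-form on $X\setminus\{o\}$, which trivializes $\mathbf{K}_{C(S)}$.

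For the domain, I would take $\rho:=\pi^{*}|z|^2$ on $\hat{X}$. Since $X\setminus\{o\}$ is a complex submanifold of $\C^{n+1}$, the restriction of $|z|^2$ to it is strictly plurisubharmonic; pulling back by the biholomorphism $\pi\colon\hat{X}\setminus E\to X\setminus\{o\}$, the function $\rho$ is strictly plurisubharmonic on $\hat{X}\setminus E$ and vanishes exactly on $E$. Tangency of the real weighted Euler field $2\operatorname{Re}\sum w_i z_i\partial_{z_i}$ to $X$, combined with $2\operatorname{Re}(\sum w_i z_i\partial_{z_i})|z|^2=2\sum w_i|z_i|^2>0$ off the origin, guarantees $d\rho\ne 0$ on every positive level set. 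Hence for sufficiently small $\epsilon>0$ the sublevel set $M_\epsilon:=\{\rho<\epsilon\}$ is a relatively compact strictly pseudoconvex domain of $\hat{X}$ containing $E$. Its boundary, identified via $\pi$ with $X\cap\{|z|^2=\epsilon\}$, is CR isomorphic by the weighted $\C^{*}$-rescaling to the Sasakian link $(S,D,J)$, up to a transversal deformation which Theorem~\ref{thm:poss-inf} explicitly allows.

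Positivity of $\mathbf{K}_{\hat{X}}$ restricts to $\mathbf{K}_{M_\epsilon}>0$, so Theorem~\ref{thm:K-E} endows $M_\epsilon$ with a unique complete K\"{a}hler--Einstein metric of negative scalar curvature whose CR infinity is $(S,D,J)$. Theorem~\ref{thm:poss-inf} now forces the exceptional set of the ambient resolution to be empty, directly contradicting $E=\pi^{-1}(o)\ne\emptyset$. The only genuinely delicate step, and the one requiring care, is the identification of the induced CR structure on $\partial M_\epsilon$ with (a transversal deformation of) the natural Sasakian structure on the link of the singularity; everything else is a direct packaging of the earlier theorems.
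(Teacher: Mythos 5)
Your argument is correct and is essentially the paper's intended one: the paper establishes property S-E (when $\sum w_i>d$) and the Gorenstein property via the adjunction form $\Omega$, and then derives the proposition directly from Theorem~\ref{thm:poss-inf}, exactly as you do. The details you supply — the strictly pseudoconvex neighborhood of the exceptional set, the application of Theorem~\ref{thm:K-E}, and the identification of the boundary with a transversal deformation of the link — are precisely the steps the paper leaves implicit.
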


We have $C(S)_{\leq 1} =X\cap\mathbb{B}^{n+1}$.  And by Proposition~\ref{prop:nor-st-pseudo} any strictly pseudoconvex domain with
CR infinity $S$ must be the domain $\pi^{-1}(X\cap\mathbb{B}^{n+1})$ in a resolution $\pi:\hat{X}\rightarrow X$.
Examples of such resolutions are easy to find by taking blow-ups or more generally weighted blow-ups.
The weight $\mathbf{w}=(w_0,\ldots,w_n)$ defines a grading $\C[z_0,\ldots,z_n]=\oplus_{k\geq 0} \C[z_0,\ldots,z_n]_k$.
We define $\deg(f)=\max\{j: f\in\oplus_{k\geq j} \C[z_0,\ldots,z_n]_k \}$.
The \emph{weighted blow-up} $\varpi: B^{\mathbf{w}} \C^{n+1} \rightarrow\C^{n+1}$ is constructed similarly to the usual but with the weighted
grading, and the exceptional fiber $E=\varpi^{-1}(o)=\mathbb{P}(w_0,\ldots,w_n)$, the weighted projective space.
And, of course, one obtains the usual blow-up with $\mathbf{w}=(1,\ldots,1)$.  If $X'\subset B^{\mathbf{w}} \C^{n+1}$ is the
strict transform, then we have the adjunction formula for the canonical bundle
\begin{equation}\label{eq:adjunct}
\mathbf{K}_{X'} =\varpi^*(\mathbf{K}_X) +(|\mathbf{w}|-\deg(f)-1)X'\cap E,
\end{equation}
provided $X'$ does not contain a divisor singular along a singular set of $B^{\mathbf{w}} \C^{n+1}$.  See~\cite{Reid} for more details.

\subsubsection{Example 1}

Consider the hypersurface
\begin{equation}
X=\{z_0^d +\cdots +z_{n-1}^d +z_n^k =0\}\subset\C^{n+1},
\end{equation}
with $k\geq d\geq n+1$.  We consider a series of blow-ups of $X$.  Blowing up gives $\pi: X_1 \rightarrow X$ where $X_1$
is the strict transform of $X$ in $\pi:\hat{\C}^{n+1}\rightarrow\C^{n+1}$, the blow-up of $\C^{n+1}$ at the origin.
Then $X_1$ is covered with affine neighborhoods $U_i ,i=0,\ldots, n$.  Take for example $U_0 \subset\C^{n+1}$ which has
coordinates $y_0,\ldots, y_{n}$ and $\pi$ is given by $z_0 =y_0 , z_1 =y_0 y_1 ,\ldots, z_{n} =y_0 y_{n}$.
Thus if $f=z_0^d +\cdots +z_{n-1}^d +z_n^k$, then $\pi^* f=y_0^d(1+y_1^d +\cdots+y_{n-1}^d +y_0^{k-d}y_n^k)$.
So if $g=1+y_1^d +\cdots+y_{n-1}^d +y_0^{k-d}y_n^k$, then $X_1 \cap U_0 =\{g=0\}\subset\C^{n+1}$.
It is elementary to check that this is a non-singular hypersurface and similarly for $X_1 \cap U_i, i=1,\ldots,n-1$.
We have $X_1 \cap U_n =\{g=0\}\subset\C^{n+1}$ where $g=y_0^d +\cdots+y_{n-1}^d +y_n^{k-d}$, and this hypersurface has a singular
point at the origin unless $k-d=0$ or $1$.  Repeating the procedure we get a resolution $\pi:\hat{X}\rightarrow X$,
$\hat{X} =X_{\lfloor\frac{k}{d}\rfloor}$, if $k \equiv 0$ or $1 \mod d$.

Denote by $E_i$ the strict transform of the exceptional set of the $i-th$ blow-up.  Then if follows from (\ref{eq:adjunct}) that
\begin{equation}
\mathbf{K}_{\hat{X}} =\sum_{i=1}^{\lfloor\frac{k}{d}\rfloor} i(n-d)E_i.
\end{equation}
In order to prove that $\mathbf{K}_{\hat{X}} >0$ we will compactify $\hat{X}$ and employ a lemma of H. Grauert.
Let $s=\lcm(d,k)$ and set $a=\frac{s}{d}$ and $b=\frac{s}{k}$.  Then we have $\C^{n+1}\subset\cps^{n+1}_{a,\ldots, a,b,1}$, where
$\cps^{n+1}_{a,\ldots, a,b,1}$ is the weighted projective space, and $f= z_0^d +\cdots +z_{n-1}^d +z_n^k$ is weighted homogeneous
with respect to these weights.  Let $Y=\{f=0\}\subset\cps^{n+1}_{a,\ldots, a,b,1}$.  If $z_0, \ldots, z_{n+1}$ are homogeneous
coordinates on $\cps^{n+1}_{a,\ldots, a,b,1}$, then we have added $\{z_{n+1}=0\}$ to $\C^{n+1}$.
Let $E_{\infty} = Y\cap\{z_{n+1}=0\}$.  Let $\hat{Y}$ be the above resolution of $Y$ given by resolving $X\subset Y$.
We will prove that
\begin{equation}
\mathbf{F}=\sum_{i=1}^{\lfloor\frac{k}{d}\rfloor} i(n-d)[E_i] +t[E_\infty] >0
\end{equation}
on $\hat{Y}$ for $t\in\N$ sufficiently large.  Note that $\mathbf{F}$ is not a Cartier divisor unless $\lcm(a,b) |t$.
We will use the following due to H. Grauert~\cite{Grau}.
\begin{lem}\label{lem:Grauert}
A line bundle $\mathbf{L}$ on a compact complex space $X$ is positive if and only if for every irreducible compact nowhere discrete
analytic subspace $Z\subset X$ there is an holomorphic section $\sigma$ of $\mathbf{L^{k}}|_Z$, for some $k$, with a zero on $Z$ but not vanishing entirely.
\end{lem}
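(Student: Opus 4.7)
The plan is to argue by induction on the dimension of the compact complex space $X$, treating the two directions of the equivalence separately. The easy direction is necessity: if $\mathbf{L}$ admits an Hermitian metric of positive curvature, then by Grauert's projective embedding theorem (the analytic version of Kodaira), some power $\mathbf{L}^k$ is very ample, so the restriction map $H^0(X,\mathbf{L}^k)\to H^0(Z,\mathbf{L}^k|_Z)$ has sufficiently large image to separate points of $Z$. Concretely, pick any $z_0\in Z$ and a second point $z_1\in Z$ (which exists because $Z$ is nowhere discrete); then some global section vanishes at $z_0$ but not at $z_1$, and its restriction to $Z$ is the desired $\sigma$.

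For the sufficiency direction I would induct on $n=\dim X$. The base case $n=1$ reduces to the observation that on each irreducible compact curve the degree of $\mathbf{L}$ is positive: indeed, a section of $\mathbf{L}^k|_Z$ vanishing somewhere but not identically forces $\deg(\mathbf{L}^k|_Z)>0$, and positive-degree line bundles on compact curves admit metrics of positive curvature by the classical uniformisation/Kodaira argument. For the inductive step, I would first use the hypothesis applied to $Z=X$ to produce, for some $k_0\geq 1$, a global section $\sigma_0\in H^0(X,\mathbf{L}^{k_0})$ whose zero-divisor $D_0$ is a proper effective analytic subset of $X$. Each irreducible component $Z_i$ of $D_0$ inherits the hypothesis (the restriction of any section from $X$ works), so by the induction hypothesis $\mathbf{L}|_{Z_i}$, hence $\mathbf{L}|_{D_0}$, is positive. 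Choose an Hermitian metric $h_0$ on $\mathbf{L}$ whose curvature is a positive $(1,1)$-form in a neighbourhood $U$ of $D_0$.

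The main obstacle is extending this partial positivity to a globally positive metric. I would do this by exhausting $X\setminus U$ by finitely many coordinate balls and, on each, modifying $h_0$ by a plurisubharmonic bump—more precisely, by working with $\tilde h = e^{-A\varphi}h_0$ where $\varphi$ is a function constructed from $|\sigma_0|^2_{h_0}$ together with suitable local plurisubharmonic functions coming from sections of further powers $\mathbf{L}^{k_j}$ whose existence is forced by applying the hypothesis on smaller compact analytic subspaces. The function $-\log|\sigma_0|^2_{h_0}$ is plurisubharmonic outside $D_0$ and tends to $+\infty$ along $D_0$, so a careful patching with partitions of unity in the complement of $D_0$, combined with the fact that $\mathbf{L}|_{D_0}$ already has a positive metric, yields a global metric whose curvature form dominates a Kähler form on all of $X$.

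The hardest step will be this globalisation: one must control the mixed second derivatives introduced by the patching so that positivity of $\sqrt{-1}\partial\ol{\partial}$-terms is preserved, and one must ensure that the metric extends smoothly across $D_0$. This is essentially the content of Grauert's original argument, and it relies crucially on Grauert's characterisation of relatively compact strongly pseudoconvex tubes in $\mathbf{L}^{-1}$: the hypothesis of the lemma is used to show that the zero section of $\mathbf{L}^{-1}$ has a strongly pseudoconvex neighbourhood basis, and then a standard construction converts such a neighbourhood basis into an Hermitian metric on $\mathbf{L}$ of positive curvature, completing the proof.
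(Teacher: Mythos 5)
The paper offers no proof of this lemma: it is quoted from Grauert's 1962 paper \cite{Grau} and used as a black box, so there is nothing internal to compare your argument against, and it must stand on its own. The necessity direction of your proposal is fine. The sufficiency direction, however, has a genuine gap exactly at the step you label the ``globalisation''. First, the claim that $-\log|\sigma_0|^2_{h_0}$ is plurisubharmonic outside $D_0$ is circular: away from the zero set of $\sigma_0$ the term $\log|\sigma_0|^2$ is pluriharmonic, so the complex Hessian of $-\log|\sigma_0|^2_{h_0}$ equals $k_0$ times the curvature form of the induced metric on $\mathbf{L}$ --- precisely the quantity you are trying to make positive. Second, local plurisubharmonic bumps supported in coordinate balls covering $X\setminus U$ cannot be patched by partitions of unity into a global correction: the cutoff functions contribute negative $\sqrt{-1}\partial\ol{\partial}$-terms on the overlaps, and in the interior of $X\setminus U$ there is no a priori positive form available to absorb them. (This absorption trick works elsewhere in the paper only because a globally defined plurisubharmonic function, pulled back from the Remmert reduction, is already in hand.) Your closing sentence, deferring the construction to ``Grauert's original argument,'' in effect assumes the theorem being proved. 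A smaller but real issue is the passage from positivity of $\mathbf{L}$ on each irreducible component $Z_i$ of $D_0$ to positivity on the possibly reducible, non-reduced space $D_0$, which needs its own patching lemma.

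If you want to salvage the inductive skeleton, the step that actually closes the induction is cohomological rather than metric: from the sequence $0\to\mathbf{L}^{m-k_0}\xrightarrow{\sigma_0}\mathbf{L}^m\to\mathbf{L}^m|_{D_0}\to 0$ and the vanishing of $H^1(D_0,\mathbf{L}^m|_{D_0})$ for $m\gg 0$ (available once $\mathbf{L}|_{D_0}$ is known positive, hence projectively embeddable by induction), the dimensions $h^1(X,\mathbf{L}^m)$ are eventually non-increasing along the progression, hence constant, so sections of $\mathbf{L}^m|_{D_0}$ lift to $X$; together with powers of $\sigma_0$ this makes $\mathbf{L}^m$ base-point free and eventually very ample, and the positive metric is the pullback of the Fubini--Study metric. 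Grauert's own route is different again: he characterizes positivity by the exceptionality of the zero section of $\mathbf{L}^{-1}$ and uses the hypothesized sections on the subspaces $Z$ to build a strongly pseudoconvex neighbourhood basis of that zero section. Either way, the mechanism is not the metric-patching you describe.
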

Let $H=\{f=0\}\subset\C^{n+1}$ be an hypersurface which is tangent to the line $\C (0,\ldots,0,1)$ at $(0,\ldots,0)\in\C^{n+1}$ to at least order $\lfloor\frac{k}{d}\rfloor -1$.  If $D=H\cap X$, then one can check that $\pi^* D=D' +\sum_{i=1}^{\lfloor\frac{k}{d}\rfloor} iE_i$,
where $D'$ is the strict transform of $D$.  Assume that $f$ is algebraic, so it extends to a rational function on $Y$ with a pole
along $E_{\infty}$.  If $t>0$ is sufficiently large, then $\pi^* f^{(n-d)}$ gives an holomorphic section of $\mathbf{F}$ with
$(\pi^* f^{(n-d)} )=D' +qE_{\infty}$, for some $q>0$.  It is not very difficult to check that, for various such $D$, the condition
of Lemma~\ref{lem:Grauert} is satisfied for $A$ not contained in $E_{\infty}$.

Suppose $A\subset E_{\infty}$.  The above argument gives a section $\sigma$ of $\mathbf{F}^k$ with $(\sigma)=kD' +kqE_{\infty}$.
If $kq$ is sufficiently divisible by $a$ and $b$, then there are many rational functions $\frac{g}{z_{n+1}^{kq}}$ with $g$ not
vanishing along $E_{\infty}$.  We have $(\sigma)\sim kD' +(g)$, and various choices of $D$ and $g$ give the required section of
$\mathbf{F}^k |_A$.

\subsubsection{Example 2}

Let
\begin{equation}
X=\{z_0^d +z_1^{2d} +\cdots +z_{n-1}^{2d} +z_n^k =0\}\subset\C^{n+1},
\end{equation}
with $k\geq 2d\geq n+2$.  Then $X$ can be resolved similar to Example 1 but by taking blow-ups with weight
$\mathbf{w}=(2,1,\ldots, 1)$.  One can repeatedly blowing up the unique singular with this weight $\lfloor\frac{k}{2d}\rfloor$
times.  And if $k\equiv 0$ or $1 \mod 2d$, this ends in a smooth resolution $\pi:\hat{X}\rightarrow X,\ X_{\lfloor\frac{k}{2d}\rfloor}$.  One can check using arguments as in Example 1 that $\mathbf{K}_{\hat{X}} >0$.

\subsubsection{Example 3}

Let
\begin{equation}
X=\{z_0^{2d} +z_1^{3d} +z_2^{6d} +\cdots +z_{n-1}^{6d} +z_n^k \},
\end{equation}
with $k\geq 6d\geq n+4$.  Again $X$ can repeatedly blowing up with weight $\mathbf{w}=(3,2,1,\ldots,1)$ the
unique singular point at each step $\lfloor\frac{k}{6d}\rfloor$ times.  If $k\equiv 0$ or $1 \mod 6d$, this ends in a smooth resolution $\pi:\hat{X}\rightarrow X,\ X_{\lfloor\frac{k}{6d}\rfloor}$.  And again similar arguments show that
$\mathbf{K}_{\hat{X}} >0$.

\subsection{Locally strongly ACH K\"{a}hler-Einstein manifolds}

We will consider examples of $ACH$ K\"{a}hler manifolds where the metric converges to that of the complex hyperbolic space
$\mathcal{H}^n_{\C}$.  More precisely, suppose $\Gamma\subset\operatorname{PSU}(1,n)$ is a finite group.  By making a conjugation,
we may assume that $\Gamma\subset\operatorname{U}(n)\subset\operatorname{PSU}(1,n)$.  Assume that $\Gamma$ acts freely
away from $o\in\mathbb{B}^n =\mathcal{H}^n_{\C}$, where $\mathbb{B}^n \subset\C^n$ is the unit ball.

\begin{defn}
The K\"{a}hler manifold $(M,g)$ is \emph{locally strongly ACH}, of order $\alpha>0$, if there is a compact set $K\subset M$, a ball
$B\subset\mathcal{H}^n_{\C}$, and a biholomorphism $\psi :M\setminus K \rightarrow \mathcal{H}^n_{\C} \setminus B/\Gamma$,
such that if $g_0$ is the hyperbolic metric of $\mathcal{H}^n_{\C} \setminus B/\Gamma$,
\begin{equation}\label{eq:met-conv}
|\psi_* g -g_0|_{g_0} =O\bigl(e^{-\alpha r}\bigr),
\end{equation}
where $r=\dist(o,x)$ is the distance from a fixed point of $\mathcal{H}^n_{\C}/\Gamma$.
\end{defn}
\begin{remark}
One generally also has to assume an analogous condition to (\ref{eq:met-conv}) on some derivatives of $g$ for most analytical
purposes, i.e.
\begin{equation}\label{eq:der-conv}
|\nabla^k\bigl(\psi_* g -g_0 \bigr)|_{g_0} =O\bigl(e^{-\alpha r}\bigr),
\end{equation}
where $\nabla^k$ is the k-th covariant derivative of $g_0$.

Also, one may consider the weaker condition that $\psi$ is merely a diffeomorphism and the complex structure $J$ of $M$
converges to $J_0$ of $\mathcal{H}^n_{\C}/\Gamma$ as in (\ref{eq:met-conv}).  See~\cite{Herz,BouHerz} for some interesting rigidity results
for such strongly ACH manifolds.  Especially considering Proposition~\ref{prop:st-ACH}, it is an interesting question whether there
are similar rigidity results for \emph{locally} strongly ACH manifolds.
\end{remark}

\begin{prop}\label{prop:st-ACH}
Let $(M,g)$ be a K\"{a}hler manifold with K\"{a}hler form $\omega\in-\frac{2\pi}{n+1}c_1(M)$, and assume that $M\setminus K$
is biholomorphic to $\mathbb{B}^n \setminus B/\Gamma$, with $\Gamma$ as above.  Then there is a locally strongly ACH
K\"{a}hler-Einstein metric $g$ on $M$ which is of order $\alpha$ for all $\alpha <2n+2$, with convergence
including all derivatives.  That is (\ref{eq:der-conv}) holds for all $k\geq 0$.
\end{prop}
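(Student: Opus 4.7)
The plan is to realize $M$ as the interior of a strictly pseudoconvex finite manifold, apply Theorem~\ref{thm:K-E} for existence, and then extract the decay from the Lee-Melrose asymptotic expansion. First I would compactify $M$ by gluing a copy of $\mathbb{S}^{2n-1}/\Gamma$ onto the end: transported through the biholomorphism $\psi$, the function $\phi=|z|^2-1$ on $\mathbb{B}^n\setminus B/\Gamma$ extends to a smooth defining function on a collar of the boundary in the compactification $M'=M\cup(\mathbb{S}^{2n-1}/\Gamma)$, and on this collar $-dd^c\log(-\phi)$ is precisely the complex hyperbolic K\"ahler form $\omega_{\mathrm{hyp}}$. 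The hypothesis $\omega\in -\frac{2\pi}{n+1}c_1(M)$ says exactly that $\mathbf{K}_M$ admits a Hermitian metric of positive curvature, so Theorem~\ref{thm:K-E} produces the unique complete K\"ahler-Einstein metric $\omega'$ on $M$ with Einstein constant $-(n+1)$.

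For the decay estimate, I would set up the Monge-Amp\`ere equation (\ref{eq:M-A}) with a reference K\"ahler form $\omega_0$ chosen so that $\omega_0-dd^c\log(-\phi)\equiv\omega_{\mathrm{hyp}}$ on $M\setminus K$. The global hypothesis on the K\"ahler class lets me adjust a naive cutoff of the hyperbolic form by a globally defined $\partial\ol{\partial}$-exact correction and recover $\omega_0$ in $-\frac{2\pi}{n+1}c_1(M)$ with the desired behavior on the end. Because $\omega_{\mathrm{hyp}}$ is itself Einstein, the function $F$ of (\ref{eq:F}) vanishes on $M\setminus K$ and hence to infinite order at $\partial M'$; in particular the Fefferman-type lemma already proved in the excerpt immediately arranges $F=O(\phi_0^N)$ for every $N$ in terms of the Fefferman defining function $\phi_0$.

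With $F$ vanishing to infinite order, the Lee-Melrose expansion (\ref{eq:u-asym}) applied inductively forces the log-term coefficients $\alpha_j$ to vanish and produces a polyhomogeneous solution $u$ that vanishes to every order in $\phi_0$, with uniform control in every H\"older norm against the background metric. Converting the boundary variable to geodesic distance through the standard relation $-\phi_0\sim e^{-2r}$ in the complex hyperbolic metric gives $\phi_0^{n+1}\sim e^{-(2n+2)r}$; since covariant derivatives in $\omega_{\mathrm{hyp}}$ of a function of size $e^{-\beta r}$ stay of the same size, the correction $\omega'-\omega_{\mathrm{hyp}}=dd^c u$ and all its covariant derivatives are $O(e^{-\alpha r})$ for every $\alpha<2n+2$, yielding (\ref{eq:der-conv}) for all $k\geq 0$.

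The main technical obstacle is the reference-metric construction: producing $\omega_0$ that at once lies in the global class $-\frac{2\pi}{n+1}c_1(M)$, agrees with $\omega_{\mathrm{hyp}}$ on the end, and keeps $F$ smooth globally uses $\omega-\omega_{\mathrm{hyp}}$ as an exact $(1,1)$-form on $M\setminus K$, invokes the $\partial\ol{\partial}$-lemma on $\mathbb{B}^n/\Gamma$ to globalize the correction, and needs a careful cutoff to preserve positivity throughout. Once this matching is achieved, the remainder is a direct invocation of Theorem~\ref{thm:K-E} followed by the Lee-Melrose asymptotics.
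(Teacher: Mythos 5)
Your overall architecture is the same as the paper's: invoke Theorem~\ref{thm:K-E} for existence, arrange the reference data so that $F$ in (\ref{eq:F}) vanishes identically on the end, feed this into the Lee--Melrose asymptotics (\ref{eq:u-asym-hol})--(\ref{eq:u-asym}), and convert via $c(-\phi)\leq e^{-2r}\leq C(-\phi)$. But there is one genuine gap at the very first step: you assert that $\omega\in-\frac{2\pi}{n+1}c_1(M)$ ``says exactly'' that $\mathbf{K}_M$ admits a positively curved Hermitian metric. The paper is at pains to point out (Section~\ref{sec:com-dd-bar}, Conjecture~\ref{conj:weak-hyp}) that this equivalence fails in general on $1$-convex manifolds precisely because the $\partial\ol{\partial}$-lemma can fail there, and it exhibits strictly pseudoconvex surfaces where it does fail. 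To close the gap you must first identify $M$ as a domain in a resolution $\pi:X\rightarrow\C^n/\Gamma$ (via Proposition~\ref{prop:nor-st-pseudo}) and then use that $\C^n/\Gamma$ has a rational singularity, so $H^1(X,\mathcal{O})=0$ and the $\partial\ol{\partial}$-lemma holds on $X$; only then does the class hypothesis upgrade to a positive Hermitian metric on $\mathbf{K}_M$. Your later appeal to the $\partial\ol{\partial}$-lemma ``on $\mathbb{B}^n/\Gamma$'' is aimed at a different step (matching the end metric) and does not cover the exceptional set, which is exactly where the obstruction $B(X)$ lives.

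Two secondary points. First, your prescription $\omega_0-dd^c\log(-\phi)\equiv\omega_{\mathrm{hyp}}$ with $\phi=|z|^2-1$ forces $\omega_0\equiv 0$ on the end, which is incompatible with $\omega_0=\frac{\sqrt{-1}}{n+1}\Theta_h$ being the curvature of a positively curved metric on $\mathbf{K}_{M'}$ as Theorem~\ref{thm:K-E} requires; the paper instead glues the given positive metric to the flat metric $\tilde{h}$ defined by the $\Gamma$-invariant volume form, twists by $e^{-Cr^2}$ to restore positivity (so $\omega_0=\frac{C}{n+1}dd^cr^2$ on the end), and then absorbs the discrepancy into the conformal factor $e^{f/(n+1)}$ of the defining function so that (\ref{eq:F-xpl}) still gives $F=0$. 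This is the concrete device your ``careful cutoff to preserve positivity'' is standing in for, and it is worth writing out. Second, your claim that the log-term coefficients $\alpha_j$ in (\ref{eq:u-asym}) vanish is not justified by anything you say; fortunately it is also not needed, since $\phi_0^{n+1}\log(-\phi_0)=O(e^{-\alpha r})$ already holds for every $\alpha<2n+2$, which is all the proposition asserts.
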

\begin{proof}
By Proposition~\ref{prop:nor-st-pseudo} $M$ is a domain in a resolution $\pi: X\rightarrow\C^n /\Gamma$.
Since this a resolution of a rational singularity the $\partial\ol{\partial}$-lemma holds.  Thus $\mathbf{K}_X$  admits an Hermitian
metric $h$ so that $\omega =\frac{\sqrt{-1}}{n+1}\Theta_h$ is a K\"{a}hler form.

If $k=|\Gamma|$, then $(dz_1 \wedge\cdots\wedge dz_n)^{\otimes k} \in\Gamma\bigl(\mathbf{K}^k_{\C^n/\Gamma} \bigr)$
and $\sigma =\pi^* (dz_1 \wedge\cdots\wedge dz_n)^{\otimes k}$ is a meromorphic section of $\mathbf{K}^k_X$, which incidentally must
have poles on the exceptional divisors.  We have a metric on $\mathbf{K}^k_X$, also denoted by $h$, with
$\omega =\frac{\sqrt{-1}}{k(n+1)}\Theta_h$.  Let $r^2 =\sum_j |z_j|^2$, and for $0<\epsilon<\frac{1}{2}$ define a cut-off function
$0\leq\rho(r)\leq 1$
\[ \rho(r)=\begin{cases}
1, & r<\epsilon \\
0, & r>2\epsilon\\
\end{cases}\]
Let $\tilde{h}$ be the metric on $\mathbf{K}^k_{X\setminus E}$, where $E$ is the exceptional set, with
$\tilde{h}|\sigma|^2 =1$.  Define the metric
\begin{equation}
\hat{h}:= e^{-Cr^2}\bigl(\rho(r) h +(1-\rho(r))\tilde{h}\bigr),
\end{equation}
which for $C>0$ sufficiently large $\omega_0 :=\frac{\sqrt{-1}}{k(n+1)}\Theta_{\hat{h}}$ is positive.
Since $\omega_0^n$ also defines an Hermitian metric on $\mathbf{K}_X^k$, we can define $f\in C^\infty(X)$ by
\begin{equation}\label{eq:equiv-Her}
e^{kf} \hat{h} =\frac{1}{\bigl( \omega_0^n \bigr)^k},
\end{equation}
where $f=c +Cr^2$ for $r>2\epsilon$.  In particular, we have (\ref{eq:omega0-Ricci}).  The defining function
$\phi =e^{\frac{f}{n+1}}\phi_0$ where $\phi_0 =\bigl[\frac{C}{n+1} \bigr]^{\frac{n}{n+1}} \bigl(r^2 -1 \bigr)$ has $-dd^c \log(-\phi)>0$
in a neighborhood of $\partial\mathbb{B}^n$.  We may modify $\phi$ on $r<\varepsilon<1$ so that $-dd^c \log(-\phi)\geq 0$ on
$M$.  Then on $r>\max(2\epsilon,\varepsilon)$ (\ref{eq:F}) becomes
\begin{equation}\label{eq:F-xpl}
F=\log\left[\frac{(-\phi_0)^{-(n+1)}\omega_0^n}{(-dd^c \log(-\phi_0))^n}  \right]=0,
\end{equation}
as is not difficult to check.  Therefore, when we solve (\ref{eq:M-A}) to get the K\"{a}hler-Einstein metric
$\omega'$ on $r>\max(2\epsilon,\varepsilon)$ we have
\begin{equation}\label{eq:u-xpl}
\begin{split}
\omega' & =\omega_0 -dd^c \log (-\phi) + dd^c u \\
        &  =-dd^c \log (-\phi_0) + dd^c u \\
        &  =-dd^c \log (1 -r^2) + dd^c u, \\
\end{split}
\end{equation}
where $u\in C^{\infty}(M)$ satisfies both (\ref{eq:u-asym-hol}) and (\ref{eq:u-asym}).  The conclusion follows
from the observation that if $r=\dist(o,x)$ is the distance with respect to the Bergman metric and $\phi$ is any defining function,
then $c(-\phi)\leq e^{-2r} \leq C(-\phi)$ for $C>c>0$.
\end{proof}

\subsubsection{Resolutions of Hirzebruch-Jung singularities}\label{subsect:Hirz-Jung}

Let $p>q>0$ be relatively prime integers and consider the finite group $\Gamma\subset U(2)$ generated by
\begin{equation}
\begin{bmatrix}
e^{\frac{2\pi iq}{p}} & 0 \\
0  & e^{\frac{2\pi i}{p}}
\end{bmatrix}.
\end{equation}
Then $\C^2 /\Gamma$ has an isolated orbifold singularity at the origin, and its minimal resolution given by a
Hirzebruch-Jung string is well known.  See~\cite[Ch. II, \S 5]{BarHulPetVan} and~\cite[\S 1.6]{Oda} for a description in terms
of toric geometry.  This minimal resolution $\pi: X\rightarrow\C^2/\Gamma$ has the following properties:
\begin{thmlist}
\item   The exceptional divisor $E=\pi^{-1}(0)=\cup_{i=1}^k C_i$, where each $C_i$ is an embedded $\cps^1$.
\item   $C_i ^2 =-e_i \leq -2$, $C_i \cdot C_j =1$ for $|i-j|=1$, and $C_i \cdot C_j =0$ for $|i-j|>1$.
\end{thmlist}

The integers $e_i$ are given by the continued fraction expansion
\begin{equation}
\frac{p}{q}=e_1 -\cfrac{1}{e_2 -\cfrac{1}{e_3 -\cfrac{1}{\ddots -\cfrac{1}{e_k}}}}
\end{equation}
In other words, the $e_i$ are determined by the Euclidean algorithm where we define $q_i, -1\leq i\leq k$, inductively
\begin{equation}
q_{-1} := p,\quad q_{0}:= q,\quad q_{i-1} =e_{i+1}q_i -q_{i+1},\quad\text{with } 0\leq q_{i+1}<q_i .
\end{equation}
Note that the resolution $\pi: X\rightarrow\C^2/\Gamma$ is the unique minimal toric resolution.  And
one can retrieve the toric diagram, i.e. the stabilizers of the $C_i$, from the $e_i$ as follows.  Suppose
$C_i$ has stabilizer $(m_i ,n_i)\in\Z^2$, with $m_i$ and $n_i$ coprime, then we have
\begin{equation}\label{eq:stabil}
\frac{n_i}{n_i -m_i}=e_1 -\cfrac{1}{e_2 -\cfrac{1}{e_3 -\cfrac{1}{\ddots -\cfrac{1}{e_{i-1}}}}}.
\end{equation}
And from (\ref{eq:stabil}) we obtain $(m_i,n_i)$ unambiguously since $m_i>0$.  If we denote by $C_0$ and $C_{k+1}$ the
non-compact curves with stabilizers $(1,0)$ and $(p-q,p)$ respectively corresponding to the axis of $\C^2/\Gamma$, then
the arrangement of curves is given in Figure~\ref{fig:resol}.

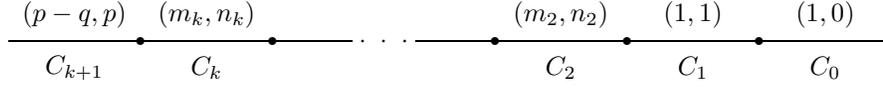
\begin{figure}\label{fig:resol}
\setlength{\unitlength}{.5pt}
\centering
\begin{picture}(700,60)(0,-30)

\put(0,0){\line(1,0){260}}
\put(268,0){\circle*{1}}
\put(284,0){\circle*{1}}
\put(300,0){\circle*{1}}

\put(308,0){\line(1,0){360}}

\put(100,0){\circle*{5}}
\put(200,0){\circle*{5}}

\put(368,0){\circle*{5}}
\put(468,0){\circle*{5}}
\put(568,0){\circle*{5}}

\put(50,-20){\makebox(0,0){$C_{k+1}$}}
\put(150,-20){\makebox(0,0){$C_{k}$}}
\put(418,-20){\makebox(0,0){$C_{2}$}}
\put(518,-20){\makebox(0,0){$C_{1}$}}
\put(618,-20){\makebox(0,0){$C_{0}$}}

\put(50,20){\makebox(0,0){$(p-q,p)$}}
\put(150,20){\makebox(0,0){$(m_k,n_k)$}}
\put(418,20){\makebox(0,0){$(m_2,n_2)$}}
\put(518,20){\makebox(0,0){$(1,1)$}}
\put(618,20){\makebox(0,0){$(1,0)$}}
\end{picture}
\caption{resolution of $\C^2/\Gamma$}
\end{figure}

We have $c_1(X)<0$ if and only if $e_i \geq 3$ for $0\leq i\leq k$, in which case Proposition \ref{prop:st-ACH}
gives a locally strongly ACH K\"{a}hler-Einstein metric on the domain $M_a =\{r<a\}\subset X$, where $r^2 =|z_1|^2 +|z_2|^2$,
which is invariant under $T^2 \subset\operatorname{U}(2)$.

It is known from D. Calderbank and M. Singer~\cite{CalSin} that the domains $M_a$ admit ASD (Self-dual Weyl curvature vanishes,
$W_+ =0$.) Hermitian Einstein metrics with negative scalar curvature.  These metrics are toric but not K\"{a}hler.  And furthermore,
these metrics have the same CR-infinity as the K\"{a}hler-Einstein metrics given here.  It would be interesting to know how these
metrics are related in say the moduli of Einstein structures on $M_a$.

\subsubsection{Resolutions of diagonal quotients $\C^n /\Gamma$}\label{subsect:diag-quot}

Let $\Gamma =\Z_k$ be the group acting on $\C^n$ with generator $(z_1,\ldots,z_n) \mapsto (\zeta\cdot z_1,\ldots,\zeta\cdot z_n)$,
$\zeta=e^{\frac{2\pi i}{k}}$.  Then the singularity $\C^n /\Gamma$ has as resolution $\pi:X\rightarrow\C^n /\Gamma$ the total space of
$\mathbf{L}^k \rightarrow\cps^{n-1}$, with $\mathbf{L}$ the tautological line bundle.  Then $c_1(X)<0$, if and only if $k>n$,
and in this case Proposition~\ref{prop:st-ACH}
gives a locally strongly ACH K\"{a}hler-Einstein metric on the domain $M_a =\{r<a\}\subset X$, where $r^2 =\sum_j |z_j|^2$,
which is invariant under $\operatorname{U}(n)$.

The metrics in this example are of cohomogeneity one, so one expects to find an explicit formula for the K\"{a}hler potential
of $\omega$.  There is a simple formula for these metrics due to A. Futaki~\cite{Fut}.   The 2-form $dd^c \log r^2$
is basic with respect to the $\C^*$-action on $\C^n/\Gamma$ and restricts to the Fubini Study metric on the quotient $\cps^{n-1}$.
Define
\begin{equation}
\omega^T =kdd^c \log r^2.
\end{equation}
Then the transversal Ricci form, i.e. that of $\omega^T$ on $\cps^{n-1}$, satisfies
\begin{equation}
\ric^T =\frac{n}{k}\omega^T.
\end{equation}
Let $t=\log r^{2k}$.  The technique of E. Calabi~\cite{Cala} is to consider metrics of the form
\begin{equation}\label{eq:Calabi}
\omega=\omega^T +\sqrt{-1}dd^c F(t).
\end{equation}
It turns out to be easier to work in a \emph{momentum coordinate} along the fiber.  So set
\begin{gather}
\tau= F'(t), \\\
\phi(\tau) = F''(t).
\end{gather}
Then (\ref{eq:Calabi}) becomes
\begin{equation}
\begin{split}
\omega & =(1+\tau)\omega^T +\phi(\tau)dt\wedge d^c t\\
       & =(1+\tau)\omega^T +\phi(\tau)^{-1} d\tau\wedge d^c \tau.
\end{split}
\end{equation}
Then its Ricci form is computed in~\cite{Fut} to be
\begin{equation}
\ric(\omega) =\ric^T -dd^c \log\bigl( (1+\tau)^{n-1} \phi(\tau)\bigr),
\end{equation}
and the Einstein equation $\ric(\omega) =-\lambda\omega$ is satisfied with $\lambda =2-\frac{n}{k}$ if
\begin{equation}\label{eq:profile}
\phi(\tau)=\frac{1}{k}(1+\tau) -\frac{(n-2k)}{k(n+1)}(1+\tau)^2 -\frac{(2n+k)}{n(n+1)}\frac{1}{(1+\tau)^{n-1}}.
\end{equation}
One retrieves the complex coordinate expression (\ref{eq:Calabi}) by integrating, for fixed $\tau_0$,
\begin{equation}\label{eq:moment-inv}
t=\int_{\tau_0}^{\tau(t)} \frac{dx}{\phi(x)}.
\end{equation}
And one also obtains
\begin{equation}
F(t) =\int_{\tau_0}^{\tau(t)} \frac{x dx}{\phi(x)}.
\end{equation}
Since (\ref{eq:profile}) grows quadratically (\ref{eq:moment-inv}) shows that the range of $t$ is finite.  In fact,
$F'(t)$ maps $(-\infty, c)$ to $(0,\infty)$.  And one can show that it is a complete metric defined on $\{r<e^{\frac{c}{2k}}\}\subset X$. The differing radii arise from the ambiguity in the integral (\ref{eq:moment-inv}).

It would be interesting to obtain a closed formula for the K\"{a}hler potential of the metric (\ref{eq:Calabi}).
There is such a formula for the Ricci-flat metric in case $k=n$ on $X$ due to E. Calabi~\cite[4.14]{Cala}.
Since the explicit formula involves integrals it is easier to see the strongly ACH nature of the metric from Proposition~\ref{prop:st-ACH}.

\subsection{Normal CR infinities in dimension 3}

Using the classification of normal CR structures on 3-manifolds in~\cite{Belgun2} and~\cite{Belgun3} we are able to mostly classify
those normal CR 3-manifolds which bound K\"{a}hler-Einstein manifolds and the unique K\"{a}hler-Einstein surfaces which thus arise.

The classification of normal CR structures on 3-manifolds follows from a classification of Sasaki structures on 3-manifolds
which in turn follows from a classification~\cite{Belgun1} of Vaisman metrics (or locally comformally K\"{a}hler metrics with a parallel
Lee forms) on compact surfaces.  The Riemannian product of a Sasaki manifold with a circle is a Vaisman manifold.
\begin{thm}[\cite{Belgun2}]\label{thm:Sasak-class}
If $(S,g,\xi)$ is a Sasaki 3-manifold then it is one of the followning.
\begin{thmlist}
\item  $S$ is a Seifert $\mathbb{S}^1$-bundle over a Riemann surface of genus $g>1$, $\xi$ generates the $\mathbb{S}^1$-action,
and the Vaisman manifold $S\times\mathbb{S}^1$ is a properly elliptic surface admitting two holomorphic circle actions.\label{it:pr-ellip}

\item  $S$ is a Seifert $\mathbb{S}^1$-bundle over an elliptic curve, $\xi$ generates the $\mathbb{S}^1$-action, and the Vaisman manifold
$S\times\mathbb{S}^1$ is a Kodaira surface admitting two holomorphic circle actions.\label{it:kod}

\item  $S$ is a finite quotient of $\mathbb{S}^3$, with holomorphic coordinates $(z_1 ,Z_2)=(x_1 +iy_1,x_2 +iy_2)$ we have
$\xi=a(x_1 \partial_{y_1}-y_1 \partial_{x_1}) +b(x_2 \partial_{y_2} -y_2 \partial_{x_2})$ with $a\geq b>0$, and the Vaisman manifold
$S\times\mathbb{S}^1 =\C^2 \setminus\{(0,0)\}/G$ is a Hopf surface of class 1 where $G$ is generated by the contraction
$g(z_1,z_2)=(e^{-a} z_1, e^{-b}z_2)$.\label{it:Hopf}
\end{thmlist}
\end{thm}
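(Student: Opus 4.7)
The plan is to reduce the classification of Sasaki 3-manifolds to the classification of compact Vaisman complex surfaces obtained by F. Belgun in [Belgun1], using the standard correspondence $S \leadsto S \times \mathbb{S}^1$. This trades a real 3-dimensional problem for a complex surface problem, where the powerful machinery of the Kodaira classification is available.

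First, I would make precise the Sasaki-to-Vaisman construction. Given $(S,g,\xi,\eta,\Phi)$, put on $M := S \times \mathbb{S}^1_t$ the product metric $g + dt^2$ and the almost complex structure $\tilde J$ defined by $\tilde J X = \Phi X - \eta(X)\partial_t$ on $TS$ and $\tilde J(\partial_t) = \xi$. This is exactly the cone construction (\ref{eq:cone-cst}) lifted to the cylinder via $r = e^t$, so integrability follows from normality of the CR structure induced by $(\xi,\eta,\Phi)$. The Hermitian form $\tilde\omega = d\eta + 2\, dt \wedge \eta$ satisfies $d\tilde\omega = -2\, dt \wedge \tilde\omega$, so $M$ is locally conformally Kähler with parallel Lee form $\theta = -2\, dt$; hence $M$ is Vaisman. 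Moreover $M$ automatically carries two commuting holomorphic $\mathbb{S}^1$-actions, namely the flow of $\xi$ (which preserves $\Phi$ and $\eta$ by normality) and the $\partial_t$-rotation in the second factor. Compactness of $S$ implies compactness of $M$.

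Next I would invoke Belgun's classification [Belgun1] of compact Vaisman surfaces: every such surface is biholomorphic to either a properly elliptic surface (minimal elliptic with odd $b_1$), a Kodaira surface, or a class 1 Hopf surface $(\C^2 \setminus\{0\})/\langle g\rangle$ with $g$ a diagonal linear contraction. These correspond exactly to items (\ref{it:pr-ellip}), (\ref{it:kod}), and (\ref{it:Hopf}). For each case I would identify $S$ by noting that the $\partial_t$-factor is recovered intrinsically from the Lee foliation: $\mathbb{S}^1_t$ integrates the Lee vector field, and $S$ is a leaf of the transverse foliation, or equivalently the quotient of $M$ by the Lee $\mathbb{S}^1$-action. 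In cases (\ref{it:pr-ellip}) and (\ref{it:kod}) the Albanese-type fibration identifies this quotient as a Seifert $\mathbb{S}^1$-bundle over the base (genus $\geq 2$ or an elliptic curve respectively), with $\xi$ generating the Seifert action. In case (\ref{it:Hopf}), passing to the universal cover $\C^2 \setminus\{0\}$ identifies $M$ with a quotient of $\C^* \times \mathbb{S}^3/\Gamma$, and the requirement that the Lee field is parallel forces $g$ to be a linear diagonal contraction $g(z_1, z_2) = (e^{-a}z_1, e^{-b}z_2)$ with $a \geq b > 0$; the complementary circle action produces the stated $\xi$.

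The main obstacle is the classification of Vaisman surfaces itself, which is the deep input from [Belgun1]; its proof uses careful twistor and foliation arguments to rule out Inoue surfaces, non-diagonal Hopf surfaces, Kähler surfaces of $b_1$ even, and so on. A secondary technical point is ensuring that the Sasaki $\mathbb{S}^1$-action and the Lee $\mathbb{S}^1$-action are genuinely independent circle actions on $M$ (hence ``two holomorphic circle actions''); this follows because $\xi$ is orthogonal to $\partial_t$ in the product metric and $[\xi,\partial_t]=0$, but verifying that the $\xi$-orbits close (so that the action is by $\mathbb{S}^1$ rather than $\R$) in the irregular case needs the explicit form of $\xi$ in each case of the classification, most transparently in (\ref{it:Hopf}) where one reads it off the weights $(a,b)$.
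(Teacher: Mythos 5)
The paper does not prove this theorem: it is quoted from~\cite{Belgun2}, with the paper noting only that it follows from the classification of Vaisman metrics on compact surfaces in~\cite{Belgun1} via the observation that the Riemannian product of a Sasaki manifold with a circle is Vaisman. Your proposal fleshes out exactly that reduction (the cylinder complex structure, the Lee form $\theta=-2\,dt$, and the case-by-case identification of $S$ as the quotient by the Lee circle), so it takes essentially the same route, with the deep input correctly isolated in Belgun's Vaisman classification.
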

The classification of normal CR manifolds $(S,D,J)$ is more complicated as as it involves identifying and distinguishing the underlying
CR structures of the above Sasaki structures.  This is solved for (\ref{it:pr-ellip}) and (\ref{it:kod}) by showing there are no other CR Reeb
vector fields, so other Sasaki structures are deformations as in Section~\ref{subsubsect:transv}.

An above $\mathbb{S}^1$-Seifert bundle is an $\mathbb{S}^1$ subbundle of a negative orbifold bundle $\mathbf{L}$ over a Riemann surface
$N$.  Suppose it has multiple fibers over $p_1,\ldots,p_k \in N$ of multiplicities $m_1,\ldots, m_k$.  We can classify the multiple
fibers by $(m_j ; q_j),\ q_j <m_j,\ j=1,\ldots,k,$, where there is a neighborhood $U$ of $p_j$ and locally $N$ is the quotient of $U\times\mathbb{S}^1$ by the $\Z_{m_j}$-action generated by $(z,w)\mapsto (e^{2\pi i/m_j} z,e^{2\pi iq_j/m_j} w)$.

\begin{prop}
Suppose $(S,D,J)$ is a normal CR 3-manifold.
\begin{thmlist}
\item  If $b_1(S)>0$, that is cases (\ref{it:pr-ellip}) and (\ref{it:kod}) in Theorem~\ref{thm:Sasak-class}, then $(S,D,J)$ is the
boundary of an unique K\"{a}hler-Einstein manifold if and only if for each $(m_j ; q_j)\ j=1,\ldots,k,$ the $e_i$ produced by the Euclidean
algorithm of Section~\ref{subsect:Hirz-Jung} satisfy $e_i \geq 3$. \label{it:quasi-reg}

\item  If $b_1(S)=0$, case (\ref{it:Hopf}), then we have a biholomorphism $C(S)\cong\C^2/\Gamma$, with $\Gamma\subset\operatorname{GL}(2,\C)$
finite and acting freely on $\C^2 \setminus\{(0,0)\}$.  Then $(0,0) \in\C^2/\Gamma$ is a rational singularity, thus the exceptional set of the minimal resolution $\pi:X\rightarrow \C^2/\Gamma$ is a tree of rational curves $C_i,\ i=1,\ldots,k$.  This is the unique K\"{a}hler-Einstein
manifold with boundary $(S,D,J)$ if and only if $C_i^2 \leq -3,\ i=1,\ldots,k$.\label{it:sphere}
\end{thmlist}
\end{prop}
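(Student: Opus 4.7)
The plan is to translate both parts of the proposition into an intersection-theoretic condition on the minimal resolution $\pi:X\to C(S)\cup\{o\}$, then invoke Theorem~\ref{thm:K-E}. By Proposition~\ref{prop:nor-st-pseudo} any K\"{a}hler--Einstein manifold $M$ with CR infinity $(S,D,J)$ is $\pi^{-1}(C(S)_{r<1})$ for some resolution $\pi$ of the cone. If $\pi$ is not the minimal resolution then its exceptional set contains a $(-1)$-curve $E'$ with $\mathbf{K}_X\cdot E'=-1$, contradicting positivity of $\mathbf{K}_X$. Hence $\pi$ must be the (canonical) minimal resolution, which gives uniqueness of $M$ up to biholomorphism; uniqueness of the K\"{a}hler--Einstein metric on $M$ is then Proposition~\ref{prop:unique}.

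For part (\ref{it:quasi-reg}) with $b_1(S)>0$, cases (\ref{it:pr-ellip}) and (\ref{it:kod}) of Theorem~\ref{thm:Sasak-class} identify $C(S)\cup\{o\}$ as the affine cone obtained by contracting the zero section of a negative orbifold line bundle $\mathbf{L}\to N$, with $g=g(N)\geq 1$. Near each orbifold point $p_j$ of type $(m_j;q_j)$, $\mathbf{L}$ is locally modelled on $\C^2/\Z_{m_j}$, precisely the Hirzebruch--Jung singularity resolved in Section~\ref{subsect:Hirz-Jung}, so the minimal resolution of $C(S)\cup\{o\}$ has exceptional set $E=\tilde N\cup\bigcup_{j,i}C_i^j$, with each $C_i^j$ a smooth rational curve of self-intersection $-e_i^j$ obtained from the continued-fraction expansion of $m_j/q_j$. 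Adjunction yields
\[
\mathbf{K}_X\cdot C_i^j=e_i^j-2,\qquad \mathbf{K}_X\cdot\tilde N=2g-2-\tilde N^2.
\]
The second quantity is automatically positive since $g\geq 1$ and $\tilde N^2<0$ (by negativity of $\mathbf{L}$), so $\mathbf{K}_X\cdot C>0$ for every exceptional curve iff every $e_i^j\geq 3$, yielding the necessity.

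Part (\ref{it:sphere}) is parallel. Case (\ref{it:Hopf}) of Theorem~\ref{thm:Sasak-class} identifies $C(S)\cup\{o\}$ with $\C^2/\Gamma$ for a finite subgroup $\Gamma\subset\operatorname{GL}(2,\C)$ acting freely on $\C^2\setminus\{0\}$ (averaging a Hermitian form conjugates $\Gamma$ into $\operatorname{U}(2)$). Every such surface quotient singularity is rational, and its minimal resolution has exceptional divisor a tree of smooth rational curves $C_i$. Adjunction again gives $\mathbf{K}_X\cdot C_i=-2-C_i^2$, so the necessary condition is $C_i^2\leq -3$.

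The main obstacle is the sufficient direction: given $\mathbf{K}_X\cdot C>0$ on every exceptional curve $C$, construct a Hermitian metric on $\mathbf{K}_X$ with positive curvature on a neighborhood of $\ol{M}$. Contractibility of $E$ makes its intersection form negative definite, so $\mathbf{K}_X$ restricts to a bundle of positive degree on each irreducible exceptional component; Grauert's criterion (Lemma~\ref{lem:Grauert}), in the form used in Section~\ref{subsect:Hirz-Jung} and combined with the strictly plurisubharmonic exhaustion of $C(S)\cup\{o\}$ supplied by Theorem~\ref{thm:1-conv-emb}, then produces the required positive Hermitian metric on a neighborhood of $E$. Theorem~\ref{thm:K-E} delivers the K\"{a}hler--Einstein metric, completing sufficiency.
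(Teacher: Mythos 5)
Your overall strategy coincides with the paper's: reduce to the minimal resolution $\pi:X\rightarrow C(S)\cup\{o\}$ via Proposition~\ref{prop:nor-st-pseudo}, use adjunction to translate $\mathbf{K}_X\cdot C>0$ on each exceptional curve into $e_i\geq 3$ (resp. $C_i^2\leq -3$), handle the zero section by $\mathbf{K}_X\cdot\tilde N=2g-2-\tilde N^2>0$, and then invoke Theorem~\ref{thm:K-E}. The necessity direction is carried out exactly as in the paper, and your observation that a non-minimal resolution is excluded because any $(-1)$-curve would force $\mathbf{K}_X\cdot E'=-1<0$ is a useful point the paper leaves implicit.

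The gap is in the sufficiency direction. Knowing $\mathbf{K}_X\cdot C_i>0$ for every irreducible exceptional curve immediately gives positivity of $\mathbf{K}_X|_E$ (a line bundle of positive degree on each component of a compact curve is ample on that curve), but it does not by itself produce a Hermitian metric on $\mathbf{K}_X$ with positive curvature on a neighborhood of $\ol{M}$, which is what Theorem~\ref{thm:K-E} actually requires. Lemma~\ref{lem:Grauert} cannot be invoked the way you state it: it is a criterion on a \emph{compact} complex space, and its hypothesis is the existence, for every compact irreducible $Z$, of sections of $\mathbf{K}_X^{k}|_Z$ with prescribed zero behavior --- sections you never exhibit. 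This is precisely the work the paper does at this point: it constructs the explicit meromorphic pluricanonical section $\Omega^{\ell}=\bigl(\partial\log r^2\wedge\pi^*\omega\bigr)^{\otimes\ell}$ with $\ell=\lcm(m_1,\ldots,m_k)$, built from a holomorphic section $\omega$ of $\mathbf{K}_N$ vanishing at $2g-2$ points; its divisor (zeros along the fibers over those points, poles of known positive orders along the exceptional curves) is what feeds the argument of Example 1 and Grauert's Satz 4 to upgrade $\mathbf{K}_X|_E>0$ to $\mathbf{K}_X>0$, and the same construction is reused in part (\ref{it:sphere}). Without producing these sections, or supplying an equivalent extension argument from positivity on $E$ to positivity near $E$, your final paragraph is an assertion rather than a proof.
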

\begin{proof}
For case (\ref{it:quasi-reg}) $S$ is a Seifert $\mathbb{S}^1$ subbundle of a negative orbifold bundle $\mathbf{L}$ over a Riemann surface
$N$ of genus $g>0$.  Either we have a regular Sasaki structure and $\mathbf{L}$ is smooth, or there are multiple fibers and corresponding
singularities $(m_j ; q_j)\ j=1,\ldots,k.$  Each of these can be resolved as in Section~\ref{subsect:Hirz-Jung}, altogether
giving the unique minimal resolution $\pi: X\rightarrow C(S)$.  It is clear that $e_i =-C^2_i\geq 3$ is necessary from the adjunction
formula $2g(C_i) -2 =-2 =K_X \cdot C_i +C_i^2$.  If $C_0$ denotes the zero section of $\mathbf{L}$, then
\begin{equation}
0\leq 2g(C_0) -2 =K_X \cdot C_0 +C_0^2.
\end{equation}
By Grauert's criterion for an exceptional curve~\cite[{p. 367}]{Grau} $C_0^2 <0$, so $K_X \cdot C_0 >0$.
Therefore by assumption $K_X \cdot C >0$ for each irreducible exceptional curve.  And if $E=\cup_i C_i$ denotes
the exceptional set, the argument in the proof of the lemma on p. 347 of~\cite{Grau} shows that
$\mathbf{K}_X |_E >0$.

We have $C(S)=\mathbf{L}^\times$, minus the zero section.  Then, after a possible homothetic change of $\xi$, the radial $r$ on $C(S)$ is
given by $r^2 =h(v,v)$ for an Hermitian metric $h$ on $\mathbf{L}$.  Let $\omega\in\Gamma(\mathbf{K}_N)$ be an holomorphic section which
vanishes on $x_1,\ldots, x_{2g-2}$.  Define the $(1,0)$-form on the total space of $\mathbf{L}$
\begin{equation}
\beta =J^* \eta +\sqrt{-1}\eta =\frac{dr}{r}+\sqrt{-1}\eta =\partial\log r^2.
\end{equation}
Then the $(2,0)$-form $\Omega :=\beta\wedge\pi^* \omega$ satisfies $\ol{\partial}\Omega=0$ and has a pole of order 1 on the zero section.
Strictly speaking, we need to take this local construction to the power $\ell=\lcm(m_1,\ldots,m_k)$ to get a true meromorphic section
$\Omega^\ell \in\Gamma(\mathbf{K}_X^\ell)$.  So $\Omega^\ell$ is a meromorphic section with zeros along $\pi^*(x_i),\ i=1,\ldots 2g-2,$
and poles of varying orders on the exceptional curves $C_i$.  Then the arguments in Example 1 above or in the proof of
Satz 4 in~\cite[{p. 367}]{Grau} show that $\mathbf{K}_X >0$.

In case (\ref{it:sphere}), $C(S)\cong\C^2/\Gamma$ follows from Theorem~\ref{thm:Sasak-class}.\ref{it:Hopf}.
It is well known that $\C^2/\Gamma$ has only rational singularities, and the properties of the exceptional curves follow from
well known properties of rational surface singularities (cf.~\cite{BarHulPetVan}).  If the exceptional curves
satisfy $C_i^2 \leq -3$ the arguments in part (\ref{it:quasi-reg}) show what $\mathbf{K}_X >0$.
\end{proof}

In part (\ref{it:sphere}) the groups $\Gamma\subset\operatorname{GL}(2,\C)$ are classified~\cite{NguPutTop}.  This follows from
\begin{equation}
1\rightarrow\C^* \rightarrow \operatorname{GL}(2,\C)\rightarrow\operatorname{PGL}(2,\C)\rightarrow 1,
\end{equation}
and the fact that the finite subgroups of $\operatorname{PGL}(2,\C)$ are the polyhedral groups.

We already know which cyclic groups $\Z_p =\Gamma\subset\operatorname{GL}(2,\C)$ have the require resolution from Section
~\ref{subsect:Hirz-Jung}.  And all the groups $\Gamma\subset\operatorname{SL}(2,\C)$ are ruled out by Theorem~\ref{thm:poss-inf}.

\bibliographystyle{plain}

\end{document}